\documentclass[11pt,reqno]{amsart}

\usepackage[useregional,showseconds=false,showzone=false]{datetime2}









\usepackage{url}

\usepackage{palatino}
\usepackage[mathcal]{euler}

\usepackage{dsfont}
\usepackage{amsmath,amsthm,amssymb}
\allowdisplaybreaks

\usepackage[all]{xy}

\usepackage{amsfonts}
\usepackage{enumerate}

\usepackage[hidelinks]{hyperref}

\numberwithin{equation}{section}

 \swapnumbers

\theoremstyle{plain}
\newtheorem*{theorem*}{Theorem} 
\newtheorem*{lemma*}{Lemma}
\newtheorem*{assumption*}{Assumption}

\newtheorem{theorem}{Theorem} 
\newtheorem{lemma}[theorem]{Lemma}
\newtheorem{corollary}[theorem]{Corollary}
\newtheorem{proposition}[theorem]{Proposition}

\theoremstyle{definition}
\newtheorem{definition}[theorem]{Definition}

\newtheorem{remark}[theorem]{Remark}

\newtheorem*{remark*}{Remark}
\newtheorem*{remarks*}{Remarks}
\newtheorem*{observation*}{Observation}

\theoremstyle{remark}

\numberwithin{theorem}{section}

\theoremstyle{plain}
    \newtheorem*{thm*}{Theorem} 
        \newtheorem{thm}[theorem]{Theorem} 
    \newtheorem{lem}[theorem]{Lemma}

\theoremstyle{definition}
    \newtheorem*{defn*}{Definition}
    
    \newtheorem*{rmk*}{Remark}

\theoremstyle{remark}
    
\numberwithin{equation}{section}

\newcommand{\bC}{\mathbb{C}}

\newcommand{\DMC}{{\small \textsf{DMC}}}
\newcommand{\MC}{{\small \textsf{MC}}}
\newcommand{\DCone}{D}
\newcommand{\Susp}{{\small \textsf{S}}}

 \newcommand{\fat}[1]{n}

\DeclareMathOperator{\res}{res}

\renewcommand{\Re}{\operatorname{Re}}

\newcommand{\R}{\mathbb{R}}
\newcommand{\C}{\mathbb{C}}

\DeclareMathOperator{\Spec}{Spec}

\begin{document}

\title[Novodvorskii's Theorem and the Oka Principle]{On  Novodvorskii's Theorem\\ and the  Oka Principle} 
\author{Jacob Bradd}
\author{Nigel Higson}
\address{Department of Mathematics, Penn State University, University Park, PA 16802, USA}

\date{\today}

\begin{abstract}
We  give an exposition of Novodvorskii's theorem in Banach algebra K-theory, asserting that the Gelfand transform for a commutative Banach algebra induces an isomorphism in topological K-theory.
\end{abstract}

\maketitle

\nocite{Taylor75NATO}
  
\section{Introduction}
\label{sec-introduction}

The purpose of this paper is to give an expository account of the following theorem of Novodvorskii \cite{Novodvorski67}, which was one  of the most striking discoveries from the early days of $K$-theory for Banach algebras:

\begin{theorem*}
If $A$ is a commutative and unital Banach algebra $A$ with Gelfand spectrum $X$, then the Gelfand transform induces an isomorphism 
\[K_*(A) \stackrel \cong \longrightarrow K_*(C(X))
\]
in \textup{(}topological\textup{)} $K$-theory.
\end{theorem*}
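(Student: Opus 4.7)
The plan is to factor the Gelfand transform through the semisimplification as
\[
A \twoheadrightarrow A/\mathrm{rad}(A) \hookrightarrow C(X),
\]
where $\mathrm{rad}(A)$ denotes the Jacobson radical, and to prove that each map induces an isomorphism on topological $K$-theory. The second map is an injective unital homomorphism onto a closed subalgebra of $C(X)$ with the same Gelfand spectrum $X$; its image is \emph{not} dense in general — the disk algebra sitting inside $C(\overline{\mathbb{D}})$ is the standard example — so the argument cannot rely on norm-closure and is instead a form of the Oka principle.

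For the radical step, apply the six-term $K$-theory exact sequence to $0 \to \mathrm{rad}(A) \to A \to A/\mathrm{rad}(A) \to 0$; it then suffices to show $K_*(\mathrm{rad}(A)) = 0$. Since every element of the radical is quasi-nilpotent, the series defining $\exp$ and $\log$ converge unconditionally on $\mathrm{rad}(A)$ and furnish mutually inverse bijections between the additive group $\mathrm{rad}(A)$ and the multiplicative group $1 + \mathrm{rad}(A)$. The straight-line homotopy $r \mapsto tr$ therefore contracts $1 + M_n(\mathrm{rad}(A))$ to the identity matrix, giving $K_1(\mathrm{rad}(A)) = 0$; a parallel conjugation argument — any idempotent in the unitisation of $M_n(\mathrm{rad}(A))$ is similar via an element of $1 + \mathrm{rad}$ to a scalar idempotent — gives $K_0(\mathrm{rad}(A)) = 0$.

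For the semisimple step, regard $A$ as a closed unital subalgebra of $C(X)$ with spectrum $X$. A determinant argument, valid because $A$ is commutative, gives
\[
\sigma_{M_n(A)}(a) \;=\; \sigma_{M_n(C(X))}(a) \;=\; \bigcup_{x \in X} \sigma(a(x))
\]
for every $a \in M_n(A)$: indeed $a - \lambda$ is invertible in $M_n(A)$ iff $\det(a - \lambda) \in A$ is, iff by Gelfand $\det(a(x) - \lambda) \neq 0$ for every $x \in X$. Consequently the holomorphic functional calculus on matrices is the same whether performed in $M_n(A)$ or in $M_n(C(X))$. I would then construct an inverse to $\Gamma_*$ on $K$-theory as follows: given a class in $K_*(C(X))$ represented by an idempotent or invertible matrix $f$ over $C(X)$, approximate $f$ by an element $a \in M_n(A)$ using an Oka--Weil approximation on the polynomially convex spectrum $X \subset \mathbb{C}^A$, and then apply the holomorphic functional calculus in $M_n(A)$ — with a function $\chi$ equal to $0$ near $0$ and $1$ near $1$ to turn a quasi-idempotent into a genuine idempotent, or $\log$ to connect a perturbation of the identity to a path of invertibles — so as to produce a genuine idempotent or invertible in $M_n(A)$ whose Gelfand transform is homotopic to $f$.

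The main obstacle is precisely this last construction, which is the Oka principle for the Gelfand spectrum. The required inputs are Oka--Weil type approximation theorems for polynomially convex compacta, together with Taylor's joint holomorphic functional calculus for commuting tuples over a commutative Banach algebra; the delicate step is arranging that the approximations and calculi depend sufficiently continuously on the input, so that they descend to well-defined maps on homotopy classes and hence to inverse maps on $K$-theory at every matrix level $n$.
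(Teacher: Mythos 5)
Your first reduction --- factoring the Gelfand transform through $A \twoheadrightarrow A/\mathrm{rad}(A)$ and showing $K_*(\mathrm{rad}(A))=0$ via $\exp/\log$ contractibility of $GL_n$ of a quasinilpotent ideal --- is correct and is essentially the same calculation that appears in the paper's Theorem~\ref{thm-gelfand-karoubi}, applied there to $\operatorname{kernel}(\alpha)$.

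The genuine gap is in the semisimple step, and it is located exactly where you flag it but is more severe than a matter of ``arranging continuous dependence.'' You propose to ``approximate $f$ by an element $a \in M_n(A)$ using an Oka--Weil approximation.'' This cannot work as stated: Oka--Weil approximates functions \emph{holomorphic in a neighborhood} of a polynomially convex compactum by polynomials, but a class in $K_*(C(X))$ is represented by an arbitrary \emph{continuous} matrix-valued function, and such a function need not be norm-approximable by elements of $M_n(A)$ at all --- your own disk-algebra example ($\bar z$ on $\overline{\mathbb{D}}$) is the counterexample. What is actually needed is not norm approximation but a \emph{homotopy} from the continuous $f$ to the Gelfand transform of some $a\in M_n(A)$, and producing that homotopy is the Oka--Grauert principle (continuous maps into $GL_n(\mathbb{C})$ from a Stein-type domain are homotopic to holomorphic ones). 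So your outline does not prove the theorem; it reduces it to the Oka--Grauert principle. That is precisely Novodvorskii's and Arens's original route, and it is the route the paper deliberately avoids.

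The paper's approach is structurally different. After the same radical/dense-image reduction (done via Karoubi density, Theorem~\ref{thm-gelfand-karoubi}, and continuity, Corollary~\ref{cor-reduction-to-f-g}), it reduces to showing $B(X)\hookrightarrow C(X)$ is a $K$-theory isomorphism for $X\subseteq\C^n$ polynomially convex. It then proves a Mayer--Vietoris theorem for the algebras $B(X)$ under hyperplane cuts of $X$ (Theorem~\ref{thm-m-v-for-polynomially-convex-sets}), using only elementary one-variable $\bar\partial$-estimates, Oka's extension lemma, Allan's division lemma, and a Banach-space implicit function theorem (Theorem~\ref{thm-improved-m-v-thm}), and finishes by a dimension induction through repeated bisection and a direct-limit argument (Section~\ref{sec-mayer-vietoris-to-main-theorem}), in analogy with the Jordan--Brouwer separation theorem. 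This buys a proof whose complex-analytic input is modest and whose engine is $K$-theoretic, whereas your outline requires the full Oka--Grauert theorem as a black box and still leaves the homotopy-theoretic bookkeeping unfinished.
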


There are several precursors to Novodvorskii's theorem. In fact  essentially the same result was proved by Arens \cite{Arens66}, in the same way, although $K$-theory was not mentioned. We refer the reader to \cite{Taylor75ICM,Taylor76} for  early surveys, as well as to the introduction of \cite{Bost90} for further remarks.  But the clean and simple statement of Novodvorskii's theorem causes it to stand out, especially now that  $K$-theory has become a familiar topic in Banach algebra theory.  

The theorem is a $K$-theoretic version of  the Oka principle from several complex variables, and it  may be proved by a reduction to the  Oka principle. This was Novodvorskii's approach. In contrast, the  proof  we shall present here     takes full  advantage  of the computational framework that $K$-theory provides, and minimizes prerequisites from several complex variables.  

The $K$-theory approach makes plain a striking analogy between the proof of the Oka principle and the  proof  of a classic result in topology, namely the Jordan-Brouwer separation theorem:

\begin{theorem*}
The complement of an embedded $(n{-}1)$-sphere in an $n$-sphere has precisely two connected components.
\end{theorem*}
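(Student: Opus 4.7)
The plan is to reduce the statement to the computation $\tilde H_0(S^n \setminus S; \Z) \cong \Z$, and to carry this out by a Mayer-Vietoris induction. In view of the parallel with the Oka principle proof promised above, the structural ingredients should be (i) a cutting-and-pasting device for the homology of complements, and (ii) an inductive reduction from an arbitrary embedded sphere to a standard one.

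The crux is the following disk lemma: for every embedded $k$-cell $D \subset S^n$, one has $\tilde H_*(S^n \setminus D; \Z) = 0$. Granting this, the theorem follows quickly. Write $S = D_+ \cup D_-$ as a union of two closed hemispheres, $(n-1)$-cells meeting in an embedded $(n-2)$-sphere. With $U = S^n \setminus D_+$ and $V = S^n \setminus D_-$, one has $U \cup V = S^n \setminus (D_+ \cap D_-)$ and $U \cap V = S^n \setminus S$. The disk lemma gives $\tilde H_*(U) = \tilde H_*(V) = 0$, so Mayer-Vietoris yields an isomorphism $\tilde H_0(S^n \setminus S) \cong \tilde H_1(S^n \setminus S^{n-2})$. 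Iterating the same decomposition one dimension lower at each stage produces a chain of isomorphisms $\tilde H_0(S^n \setminus S) \cong \tilde H_1(S^n \setminus S^{n-2}) \cong \cdots \cong \tilde H_{n-1}(S^n \setminus S^0)$. Since $S^n$ minus two points deformation retracts onto an equatorial $S^{n-1}$, the last group is $\tilde H_{n-1}(S^{n-1}) \cong \Z$.

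The disk lemma itself is proved by a second induction, this one on $k$. The base case $k = 0$ holds because $S^n \setminus \{\text{pt}\}$ is homeomorphic to $\R^n$, hence contractible. For the inductive step, parametrize $D$ as $[0,1]^k$ and cut it in half along one coordinate into two $k$-cells $D_+$ and $D_-$ meeting in a $(k-1)$-cell. Mayer-Vietoris for the cover $S^n \setminus D_+$, $S^n \setminus D_-$ of $S^n \setminus (D_+ \cap D_-)$, combined with the inductive hypothesis at level $k-1$, shows that the restriction map $\tilde H_*(S^n \setminus D) \to \tilde H_*(S^n \setminus D_+) \oplus \tilde H_*(S^n \setminus D_-)$ is injective. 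A nonzero class therefore restricts nontrivially to at least one half. Iterating the halving through rotating coordinates produces a nested sequence of $k$-cells $D \supset D^{(1)} \supset D^{(2)} \supset \cdots$ whose intersection is a single point, together with classes that remain nonzero in $\tilde H_*(S^n \setminus D^{(j)})$ at every stage.

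The main obstacle, and the heart of the argument, is to rule out this last scenario. The input needed is continuity of singular homology under nested unions: a chain witnessing the eventual nullhomology in $S^n \setminus \{\text{pt}\} \cong \R^n$ has compact image, hence lies in $S^n \setminus D^{(j)}$ for all sufficiently large $j$, contradicting the nonvanishing there. I expect this continuity-of-chains device, rather than Mayer-Vietoris itself, to play the structural role analogous to the limiting step in the $K$-theoretic proof of the Oka principle.
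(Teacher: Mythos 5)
Your proof is correct and is essentially the argument sketched in the paper's introduction: reduce to the disk lemma (embedded cells in $S^n$ have acyclic complements), prove it by repeated bisection using Mayer--Vietoris together with compactness of singular chains, and deduce the separation theorem by a hemisphere decomposition of the embedded sphere. The one small difference is that you rotate the bisection coordinate so that the nested cells shrink to a single point (the base case $k=0$ then finishes the argument), whereas the paper bisects along parallel hyperplanes so that the nested cells shrink to a $(k{-}1)$-cube and the inductive hypothesis is applied at the limit stage; both variants work, and for the same reasons.
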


The separation theorem   quickly  reduces to the following assertion, which is the heart of the matter:

\begin{theorem*} 
The complement  of an   embedded $k$-cube in an $n$-sphere has the singular homology of a point.
\end{theorem*}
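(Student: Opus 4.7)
The plan is to proceed by induction on $k$.  The base case $k = 0$ is an embedded point in $S^n$, whose complement is either a single point (when $n = 0$) or homeomorphic to $\R^n$ (when $n \geq 1$) and hence contractible; either way it has the singular homology of a point.  For the inductive step, fix an embedding $f \colon I^k \hookrightarrow S^n$ with $k \geq 1$ and assume the result is known for embedded $(k{-}1)$-cubes.

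The main tool is a bisection-plus-Mayer--Vietoris argument.  First I would bisect the cube in, say, the last coordinate, setting $A = f(I^{k-1} \times [0,\tfrac12])$ and $B = f(I^{k-1} \times [\tfrac12, 1])$, so that $A \cap B = f(I^{k-1} \times \{\tfrac12\})$ is an embedded $(k{-}1)$-cube.  With the open sets $U = S^n \setminus A$ and $V = S^n \setminus B$, one has $U \cap V = S^n \setminus f(I^k)$ and $U \cup V = S^n \setminus (A \cap B)$, and the inductive hypothesis gives $\tilde H_*(U \cup V) = 0$.  The reduced Mayer--Vietoris sequence therefore collapses to an isomorphism
\[
\tilde H_i\bigl(S^n \setminus f(I^k)\bigr) \;\cong\; \tilde H_i(S^n \setminus A) \oplus \tilde H_i(S^n \setminus B),
\]
so any nonzero class $[c] \in \tilde H_i(S^n \setminus f(I^k))$ remains nonzero in the complement of at least one of $A$, $B$, each of which is itself an embedded $k$-cube.

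Iterating the bisection, I would produce a nested sequence $A_1 \supset A_2 \supset \cdots$ of embedded $k$-cubes in which $[c]$ stays nonzero and whose extent in the last coordinate is halved at each step, so that $\bigcap_j A_j = f(I^{k-1} \times \{t\})$ for some $t \in [0,1]$ is an embedded $(k{-}1)$-cube.  By the inductive hypothesis $c$ bounds a singular chain $D$ in $S^n \setminus \bigcap_j A_j$.  The concluding step---and the one that most needs care---is a compactness argument: the support of $D$ is a compact subset of $S^n$ disjoint from the decreasing intersection of closed sets $\bigcap_j A_j$, and therefore must already be disjoint from some $A_J$.  Then $[c] = 0$ in $\tilde H_i(S^n \setminus A_J)$, contradicting the construction of $A_J$, and the induction is complete.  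The main obstacle is this final passage from bounding modulo the limiting $(k{-}1)$-cube to bounding modulo a single $A_J$; once the Mayer--Vietoris step is set up cleanly everything else is formal.
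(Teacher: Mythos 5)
Your argument is correct and follows essentially the same route the paper sketches: bisect the cube, apply reduced Mayer--Vietoris to identify $\tilde H_i$ of the complement of the $k$-cube with the direct sum of $\tilde H_i$ of the complements of the two half-cubes, iterate the bisection along parallel hyperplanes to produce a nested sequence shrinking to a $(k{-}1)$-cube, and pass to the limit. What the paper calls ``the continuity property of singular homology'' is exactly the compact-support argument you spell out in the final step, so the two proofs coincide.
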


This is proved by  induction on the dimension of the cube.  First, the dimension zero case is trivial.  Next, by writing a $k$-cube $C$ as a union of two closed half-cubes that intersect along a midplane,    by assuming the result for this midplane, which is a lower-dimen\-sional cube,  and by invoking the Mayer-Vietoris sequence, we find that the theorem is true for a given embedding of  $C$ if and only if it is true for the embeddings of  the two half-cubes.  

Each of these half-cubes we may in turn cut in two,   along  hyperplanes parallel to the first cut; then we may do the same to   the four resulting quarter cubes; and so on. A simple diagram chase and an application of the continuity property of singular homology  show that the theorem holds for the embedding of $C$ if and only if it holds for the embeddings of all 
$\cap C_j$, where $C_0{=}C$ and 
where  $C_j$ is one of the halves that is obtained by bisecting   $C_{j-1}$, as above. But $\cap _j C_j$ is a cube of lower dimension, so the induction hypothesis applies, and the proof is complete.

Novodvorskii's theorem  may be proved in essentially the same way, using a combination of Mayer-Vietoris and continuity in $K$-theory (meaning compatibility with direct limits).\footnote{See also \cite{Larusson03}, where essentially the same point about the role of the Mayer-Vietoris property in the proof of the Oka principle is made in more categorical language. We thank Jonathan Block for pointing this out to us.} 

To begin, the key instance of the theorem, to which all others may be reduced, is that of the Banach algebra $B(X)$ that is associated to a polynomially convex compact set $X \subseteq \C^n$, and is constructed  by completing the algebra of polynomial functions on $X$ in the uniform norm.  The Gelfand transform in this case is the inclusion of $B(X)$ into the algebra $C(X)$ of all continuous functions.  

The main task is to establish a $K$-theory Mayer-Vietoris sequence for the $B(X)$-algebras, leading to commuting diagrams of the form
\[
\xymatrix@C=12pt{
\cdots\ar[r] &  K_0(B(X_1\cap X_2))\ar[r]\ar[d] & K_1(B(X_1\cup X_2)) \ar[r]\ar[d] & K_1(B(X_1)\oplus B(X_2))\ar[d]\ar[r]& \cdots\\
\cdots\ar[r] &  K_0(C(X_1\cap X_2))\ar[r]  & K_1(C(X_1\cup X_2)) \ar[r] &K_1(C(X_1)\oplus C(X_2))\ar[r]& \cdots \\}
\]
Complex analysis enters here.  Then, with the Mayer-Vietoris sequence in hand, the proof of Novodvorskii's theorem is by induction on the dimension of $X$, using repeated divisions of $X$ by hyperplanes and an eventual direct limit argument.

More than once the Oka principle has been suggested as  a possible starting point for  new approaches to the Baum-Connes conjecture in the $K$-theory of group $C^*$-algebras \cite{BCH93}; see for example  \cite{AparicioEtAl19}.  Those ideas have  not yet advanced very far, but it was with  possible noncommutative generalizations   in mind that we  have written this exposition.

\section{The Mayer-Vietoris Property}

In this section we shall  gather some   results about Mayer-Vietoris sequences in Banach algebra $K$-theory.  Most are  easily derived from foundational properties of $K$-theory such as the six-term exact sequence, and so on.
Generally we shall simply state these results.
An exception is   Theorem~\ref{thm-improved-m-v-thm}, which is    more substantial and   less well known. 

We begin by recalling some facts about mapping cones of Banach algebra morphisms and double mapping cones of pairs of Banach algebra morphisms.

\begin{definition}
\label{def-mapping-cone}
Let   $\varphi\colon A \to B$ be a morphism of Banach algebras.\footnote{Throughout the paper, our morphisms are required to be continuous, but not contractive unless otherwise advertised. In particular, for us, an isomorphism of Banach algebras is not required to be isometric.} 
The \emph{mapping cone} $\MC (\varphi)$ is the Banach algebra
\begin{equation*}
    \MC (\varphi)
=
 \bigl  \{\, (a,f)\in A \oplus C([0,1],B)
 :\varphi(a) = f(0)    \,\, \text{and}\,\,  f(1) = 0  \, \bigr\} .
\end{equation*}
\end{definition}

If we define the \emph{suspension} of the Banach algebra $B$ by 
\[
\Susp (B) =  \{\, f\in C([0,1],B)
 :  f(0)   =   0 =  f(1)   \, \bigr\} ,
 \]
 then there are Banach algebra morphisms 
\[
\Susp (B) \stackrel{\iota} \longrightarrow \MC(\varphi) \stackrel{\pi} \longrightarrow A
\]
given by inclusion and projection, and they induce a six-term mapping cone exact sequence in $K$-theory
\begin{equation}
    \label{eq-mapping-cone-six-term-sequence}
\xymatrix{
K_0(\MC (\varphi)) \ar[r]^-{\pi_*} & K_0(A)  \ar[r]^{\varphi_*} & K_0(B) \ar[d]^{\iota_*} \\
\ar[u]^{\iota_*} K_1(B) &\ar[l]^{\varphi_*} K_1(A)  & \ar[l]^-{\pi_*}  K_1(\MC (\varphi)).
}
\end{equation}
This uses the suspension and periodicity isomorphisms 
\[
K_0 (\Susp (B)) \cong K_1 (B) 
\quad \text{and} \quad 
K_1 (\Susp (B)) \cong K_0 (B) .
\]

\begin{definition}
Given a diagram
of Banach algebras and Banach algebra morphisms  of the form 
\begin{equation}
\label{eq-half-square}
\xymatrix{
     & B \ar[d]^{\varphi }\\
C \ar[r]_{\psi  } \ar[r]& D
}
\end{equation}
we  define the \emph{double mapping cone} $\DMC (\varphi,\psi)$ to be the Banach algebra
\begin{multline*}
    \DMC (\varphi,\psi)
=
 \Bigl  \{\, (b,f,c)\in B \oplus C([0,1],D)\oplus C 
 \\
 :\varphi(b) = f(0)    \,\, \text{and}\,\, \psi(c) = f(1)  \, \Bigr\} .
\end{multline*}
\end{definition}

There is an obvious surjective morphism   from $\DMC (\varphi,\psi)$ to $B\oplus C$, and its kernel is $\Susp(D)$. Associated to the surjection is therefore a $K$-theory $6$-term exact sequence
\begin{equation}
    \label{eq-dmc-six-term-sequence}
\xymatrix@C=20pt{
K_0(\DMC (\varphi,\psi)) \ar[r] & K_0(B) \oplus K_0(C) \ar[r] & K_0(D) \ar[d]\\
\ar[u] K_1(D) &\ar[l] K_1(B) \oplus K_1(C) & \ar[l] K_1(\DMC (\varphi,\psi))
}
\end{equation}
that is analogous to \eqref{eq-mapping-cone-six-term-sequence}.

\begin{definition}
\label{def-m-v-property}
Suppose the diagram \eqref{eq-half-square} is part of a commuting square of Banach algebras and Banach algebra morphisms
\begin{equation}
\label{eq-commuting-square}
\xymatrix{
A \ar[r]^{\beta}\ar[d]_{\gamma} & B \ar[d]^{\varphi }\\
C \ar[r]_{\psi } \ar[r]& D .
}
\end{equation}
 The  associated \emph{canonical morphism}  is  
\begin{gather*}
A \longrightarrow \DMC (\varphi,\psi)
\\
a \longmapsto (\beta(a), f_a, \gamma (a)),
\end{gather*}
where $f_a\colon [0,1]\to B$ is the constant function with value $\varphi (\beta(a))=\psi (\gamma(a))$.
We shall say that   \eqref{eq-commuting-square}
has the \emph{Mayer-Vietoris property} if the canonical morphism  induces an isomorphism in $K$-theory.
\end{definition}

If the diagram \eqref{eq-commuting-square} has the Mayer-Vietoris property, then by substituting  $K_*(A)$ for   $K_*(\DMC(\varphi,\psi))$ in the six-term sequence \eqref{eq-dmc-six-term-sequence} we obtain the $6$-term Mayer-Vietoris exact sequence
\begin{equation}
    \label{eq-basic-m-v-sequence}
\xymatrix{
K_0(A) \ar[r] & K_0(B) \oplus K_0(C) \ar[r] & K_0(D) \ar[d]\\
\ar[u] K_1(D) &\ar[l] K_1(B) \oplus K_1(C) & \ar[l] K_1(A).
}
\end{equation}
This explains the terminology in Definition~\ref{def-m-v-property}.   

We shall need two simple facts about the Mayer-Vietoris property related to a  commuting cube of Banach algebras and Banach algebra morphisms
\begin{equation}
    \label{eq-cube}
\xymatrix@!0@C=27pt{
& A_1 \ar[dl] \ar[rr]\ar'[d][dd]
& & B_1 \ar[dl]\ar[dd] \\
A_2  \ar[rr]\ar[dd]
& & B_2  \ar[dd] \\
& C_1\ar[dl]  \ar'[r][rr] & & D_1 \ar[dl]
\\
C_2 \ar[rr]  & & D_2 
}
\end{equation}

\begin{lemma}
\label{lem-mv-cube-back-to-front}
If either the front face or the back face in \eqref{eq-cube} has the Mayer-Vietoris property, and if all the morphisms from the back face to the front face induce isomorphisms in $K$-theory, then the opposite face has the Mayer-Vietoris property, too. \qed
\end{lemma}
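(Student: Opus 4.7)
The plan is to combine naturality of the six-term exact sequence \eqref{eq-dmc-six-term-sequence} with the five lemma. Write $\varphi_i \colon B_i \to D_i$ and $\psi_i \colon C_i \to D_i$ for the horizontal arrows in the back face ($i{=}1$) and the front face ($i{=}2$) of \eqref{eq-cube}, and let $\alpha \colon A_1 \to A_2$ denote the back-to-front arrow at the $A$-corner.

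The first step is to observe that the cube \eqref{eq-cube} induces, componentwise, a Banach algebra morphism $\DMC(\varphi_1,\psi_1) \to \DMC(\varphi_2,\psi_2)$, and that this morphism together with $\alpha$ fits into a commuting square
\[
\xymatrix{
A_1 \ar[r] \ar[d]_\alpha & \DMC(\varphi_1,\psi_1) \ar[d] \\
A_2 \ar[r] & \DMC(\varphi_2,\psi_2)
}
\]
in which the horizontal arrows are the canonical morphisms of Definition~\ref{def-m-v-property}.

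Next I would show that the right-hand vertical arrow induces an isomorphism in $K$-theory.  The back-to-front arrows $\beta\colon B_1\to B_2$, $\gamma\colon C_1\to C_2$, $\delta\colon D_1\to D_2$ assemble, by naturality of the $K$-theory boundary map, into a morphism from the six-term sequence \eqref{eq-dmc-six-term-sequence} for $i=1$ to the one for $i=2$.  By hypothesis, the induced maps $K_*(B_1)\oplus K_*(C_1)\to K_*(B_2)\oplus K_*(C_2)$ and $K_*(D_1)\to K_*(D_2)$ are isomorphisms, so the five lemma forces the middle vertical arrow to be an isomorphism as well.

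Finally, the hypothesis that $\alpha_*$ is an isomorphism, combined with the elementary fact that in a commuting square of abelian groups three isomorphisms force the fourth, shows that the canonical morphism on the back face induces a $K$-theory isomorphism if and only if the one on the front face does.  The only step with any substance is naturality of \eqref{eq-dmc-six-term-sequence}, and this is immediate from the fact that the sequence is constructed from the extension $0 \to \Susp(D_i) \to \DMC(\varphi_i,\psi_i) \to B_i \oplus C_i \to 0$, a construction manifestly functorial in the diagram \eqref{eq-half-square}.
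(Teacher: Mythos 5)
Your proof is correct and is precisely the argument the paper leaves to the reader (the lemma is stated with a terminal \qed and no written proof): functoriality of the double mapping cone, naturality of the six-term sequence coming from the extension $0 \to \Susp(D_i) \to \DMC(\varphi_i,\psi_i) \to B_i\oplus C_i \to 0$, the five lemma to get that the induced map on $K_*(\DMC)$ is an isomorphism, and the two-out-of-three principle in the final commuting square.
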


\begin{lemma}
\label{lem-mv-cube-mapping-cone}
If both the front face and the back face in \eqref{eq-cube} have the Mayer-Vietoris property, then the square composed of the mapping cone algebras for the morphisms from the back face to the front face has the  Mayer-Vietoris property, too.\qed
\end{lemma}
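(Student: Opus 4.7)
The plan is to identify the codomain $\DMC(M_B \to M_D \leftarrow M_C)$ of the canonical morphism for the mapping cone square with the mapping cone of an induced morphism between the double mapping cones associated to the back and front faces, and then to deduce the result from the hypothesis via naturality of the six-term mapping cone sequence \eqref{eq-mapping-cone-six-term-sequence} and the five lemma.

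Write $\varphi_X \colon X_1 \to X_2$ for the morphism from the back face to the front face for $X \in \{A, B, C, D\}$, with mapping cone $M_X$. The commuting cube \eqref{eq-cube} equips the four mapping cones with morphisms forming a commuting square, and it is this square whose Mayer-Vietoris property we want to establish. Let $\DMC_i = \DMC(B_i \to D_i \leftarrow C_i)$ for $i = 1, 2$ denote the double mapping cones of the back and front faces. The vertical morphisms of the cube induce a Banach algebra morphism $\Phi \colon \DMC_1 \to \DMC_2$.

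The key step is to exhibit a natural Banach algebra isomorphism
\[
\MC(\Phi) \;\cong\; \DMC(M_B \to M_D \leftarrow M_C).
\]
Unwinding the definitions, both sides may be described as Banach algebras of tuples
\[
(b_1,\ c_1,\ f_B,\ f_C,\ f_D,\ F),
\]
where $b_1 \in B_1$, $c_1 \in C_1$, $f_B \in C([0,1], B_2)$, $f_C \in C([0,1], C_2)$, $f_D \in C([0,1], D_1)$, and $F \in C([0,1]^2, D_2)$, all subject to matching boundary conditions along the four edges of the square $[0,1]^2$. The conditions on the two variables of $F$ differ between the two algebras precisely by a transposition of those variables; hence swapping them (and leaving the other coordinates fixed) furnishes the desired isomorphism. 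Under this isomorphism, the canonical morphism $M_A \to \DMC(M_B \to M_D \leftarrow M_C)$ corresponds to the morphism $\MC(\varphi_A) \to \MC(\Phi)$ functorially induced by the canonical morphisms $A_i \to \DMC_i$.

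With this identification in place, the argument concludes swiftly. By the Mayer-Vietoris hypothesis on the front and back faces, the canonical morphisms $A_i \to \DMC_i$ for $i = 1, 2$ induce isomorphisms on $K$-theory. The six-term mapping cone sequences \eqref{eq-mapping-cone-six-term-sequence} for $\varphi_A$ and for $\Phi$ are naturally linked into a ladder by these canonical morphisms, so the five lemma yields that
\[
K_*(M_A) \;\longrightarrow\; K_*(\MC(\Phi)) \;\cong\; K_*(\DMC(M_B \to M_D \leftarrow M_C))
\]
is an isomorphism, which is the Mayer-Vietoris property for the mapping cone square. I expect the main obstacle to be the bookkeeping in the first step: keeping straight the various morphisms and the four edges of $[0,1]^2$ is laborious, but no essential subtlety should arise once one notices that transposition of the square $[0,1]^2$ takes care of everything.
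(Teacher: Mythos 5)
Your proposal is correct, and since the paper states the lemma with no proof (marking it \textup{\qedsymbol} as a routine consequence), your argument supplies exactly the manipulation the authors left implicit. The central identification $\MC(\Phi)\cong\DMC(M_B{\to}M_D{\leftarrow}M_C)$ via transposing the two $[0,1]$-parameters is correct, as is the observation that this identification carries the induced morphism $M_A=\MC(\varphi_A)\to\MC(\Phi)$ to the canonical morphism of the mapping-cone square; the five lemma applied to the ladder of six-term mapping-cone sequences then finishes the argument.

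One small presentational point: the phrase ``matching boundary conditions along the four edges of the square'' slightly understates the constraints, since the tuple is also constrained by the endpoint relations tying $b_1,c_1$ to $f_B(0),f_C(0),f_D(0),f_D(1)$ and by the cube's structure maps. The transposition argument is nevertheless sound once these are written out, as you indicate, so this is a matter of exposition rather than a gap.
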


\begin{definition} The  diagram \eqref{eq-commuting-square} has the \emph{pullback property}, or is a  \emph{pullback square}, if the morphism of Banach algebras
\begin{equation*}
A \longrightarrow 
\bigl \{ \,
(b,c)\in B\oplus C : \varphi (b) = \psi  (c) \, \bigr \}
\end{equation*}
defined by the formula $a  \mapsto (\beta(a), \gamma(a))$ is an isomorphism.
\end{definition}

Our objective for the remainder this section is to prove the following result:

\begin{theorem}
\label{thm-improved-m-v-thm}
If the commuting  square of Banach algebras  \eqref{eq-commuting-square} is a pullback square,  if 
\[
\varphi [ B] + \psi[C] = D ,
\]
and if one or both of the morphisms $\varphi $ or $\psi $ has dense image, then the square has the Mayer-Vietoris property. 
\end{theorem}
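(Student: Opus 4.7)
The plan is to first dispose of the special case in which $\psi$ is literally surjective, and then try to bootstrap back to the general case by a mapping-cylinder replacement of $\psi$, with the two extra hypotheses entering essentially at the bootstrap step.

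In the surjective case, the pullback property forces $\beta\colon A\to B$ to be surjective (every $b\in B$ has $\varphi(b)\in D=\psi[C]$), with kernel $\{(0,c):\psi(c)=0\}$ naturally identified with $\ker\psi$. The canonical morphism $A\to\DMC(\varphi,\psi)$ then fits into a morphism of short exact sequences of Banach algebras whose top row is $0\to\ker\psi\to A\to B\to 0$, whose bottom row is $0\to\MC(\psi)\to\DMC(\varphi,\psi)\to B\to 0$, and whose right-hand vertical is the identity on $B$; the left-hand vertical includes $\ker\psi$ into $\MC(\psi)$ as the constant zero path. This inclusion is a $K$-equivalence when $\psi$ is surjective, because both algebras fit into six-term $K$-theory sequences whose four middle terms are $K_*(C)$ and $K_*(D)$ with the same connecting maps, and the comparison maps are $K$-isomorphisms by a five-lemma applied to \eqref{eq-mapping-cone-six-term-sequence} and the standard sequence for the surjection $\psi$. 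A further five-lemma then shows the middle vertical, the canonical morphism, is a $K$-equivalence.

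For the general case, I would replace $\psi$ with the surjection $\tilde\psi\colon \operatorname{Cyl}(\psi)\to D$, $(c,g)\mapsto g(1)$, where $\operatorname{Cyl}(\psi)=\{(c,g)\in C\oplus C([0,1],D):g(0)=\psi(c)\}$ is the mapping cylinder and $C\hookrightarrow\operatorname{Cyl}(\psi)$ is the natural $K$-equivalence by constant paths. The pullback in the new square---with surjective $\tilde\psi$---is canonically isomorphic via time-reversal to $\DMC(\varphi,\psi)$, so the surjective case applied to the new square yields that a certain comparison morphism $\DMC(\varphi,\psi)\to\DMC(\varphi,\tilde\psi)$ is a $K$-equivalence.

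The main obstacle is to combine these ingredients to conclude that the \emph{original} canonical morphism $A\to\DMC(\varphi,\psi)$ is a $K$-equivalence. The cube whose back face is the original square and whose front face is the new surjective-$\tilde\psi$ square has every vertical morphism a $K$-equivalence except precisely the one, $A\to\DMC(\varphi,\psi)$, that our theorem is about---so a direct appeal to Lemma \ref{lem-mv-cube-back-to-front} alone is circular. The hypotheses $\varphi[B]+\psi[C]=D$ and dense image must therefore be used in a substantive, non-formal way. A plausible route is to build from them a genuine short exact sequence of Banach algebras realizing in $K$-theory the Banach-space exact sequence
\[
0\to A\to B\oplus C\xrightarrow{\varphi-\psi}D\to 0
\]
(which is exact by the covering hypothesis alone), and then match its six-term sequence with the one from $0\to\Susp(D)\to\DMC(\varphi,\psi)\to B\oplus C\to 0$; the dense image hypothesis should enter through an approximation argument bridging the non-multiplicative combination $\varphi-\psi$ with honest algebra morphisms.
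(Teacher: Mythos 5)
Your treatment of the surjective case is correct and is essentially the classical Milnor argument (the remark after the theorem's statement points to \cite[\S\S1--3]{Milnor71} for exactly this). You have also correctly identified the fatal obstruction to the mapping-cylinder bootstrap: the cube you build has the canonical morphism $A\to\DMC(\varphi,\psi)$ as one of its back-to-front arrows, so invoking Lemma~\ref{lem-mv-cube-back-to-front} is circular. The front face having the Mayer-Vietoris property tells you, after the time-reversal identification, that the comparison $K_*(\DMC(\varphi,\psi))\to K_*(\DMC(\varphi,\tilde\psi))$ is an isomorphism; but this map is \emph{already} known to be an isomorphism by the five lemma applied to the sequences $0\to\Susp(D)\to\DMC\to B\oplus C\to 0$, so it yields nothing new about $K_*(A)\to K_*(\DMC(\varphi,\psi))$. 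Your proposed repair---manufacturing a short exact sequence of Banach \emph{algebras} that realizes the Banach-space sequence $0\to A\to B\oplus C\xrightarrow{\varphi-\psi}D\to 0$---does not have an obvious implementation, precisely because $\varphi-\psi$ is not multiplicative, and it is not how the paper proceeds.

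The paper's argument bypasses the algebra-level sequence entirely and works with the spaces of invertibles. Instead of $\varphi-\psi$ it considers the group-valued map $\pi\colon GL_n(B)\times GL_n(C)\to GL_n(D)$, $\pi(S,T)=\varphi(S)^{-1}\psi(T)$, whose fiber over the identity is $GL_n(A)$ by the pullback hypothesis. The crux is to show $\pi$ is a Serre fibration, so that the long exact homotopy sequence becomes the desired Mayer-Vietoris sequence. This is done in two steps. First, the linearization of $\pi$ near the identity is $(S,T)\mapsto\psi(T)-\varphi(S)$, which is surjective by the hypothesis $\varphi[B]+\psi[C]=D$; a nonlinear open mapping theorem of Graves together with the Bartle-Graves selection theorem then yields a continuous local section of $\pi$ near the identity. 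Second, the map $\pi$ is equivariant for the right action of $G=GL_n(B)\times GL_n(C)$, and the dense-image hypothesis (applied via the exponential map) shows every $G$-orbit in $GL_n(D)$ is dense in the components it meets; combined with the openness of the image of $\pi$, this makes the image a union of components, so $\pi$ is a principal $GL_n(A)$-bundle over an open and closed subset, hence a Serre fibration. This topological route is exactly where both extra hypotheses enter substantively, and is the missing piece in your proposal: the key idea is not an algebraic short exact sequence but an implicit-function argument producing a local section of a nonlinear, non-homomorphic map of Banach Lie groups.
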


\begin{remark}
It is   simple to prove that the commuting square \eqref{eq-commuting-square} is a pullback square, and if $\varphi[B]{=}D$,  or if $\psi[C]{ =} D$, then  the square has the Mayer-Vietoris property (for instance, the algebraic argument in \cite[\S\S1--3]{Milnor71} is easily adapted to topological $K$-theory).
The stronger result in Theorem~\ref{thm-improved-m-v-thm}  is certainly known to experts in the $K$-theory of Banach algebras, but it is difficult to find a written account.
\end{remark}

The proof of Theorem~\ref{thm-improved-m-v-thm} involves the definition of $K$-theory in terms of homotopy groups, some elementary ideas from homotopy theory,  and an implicit function theorem in the Banach space context.

To begin, recall that the $K$-theory groups of $A$  may be defined by $K_j(A)=\varinjlim \pi_{j-1}(GL_n (A))$ for $j{>}0$, using the standard embeddings on $GL_n(A)$ into $GL_{n+1}(A)$ \cite[\S3]{Milnor71}.\footnote{As usual, when a Banach algebra ${A}$ does not have a multiplicative identity element, we embed ${A}$ as an ideal in any Banach algebra $\widetilde {{A}}$ that does have one, and  then define $GL_n({A})$ to be the kernel of the morphism $GL_n(\widetilde {A})\to GL_n(\widetilde {{A}} /{A})$.}  .

Next, recall that a continuous map $f\colon  E \to B$ between two topological spaces is a (\emph{Serre}) \emph{fibration} if $f$ satisfies the homotopy lifting property \cite[Ch.1, Sec.7]{Whitehead78} for maps from a cube of any finite dimension into $B$.  If we fix a basepoint $e\in E$, set $b= f(e)$, and form the  fiber $F = f^{-1}[b]$, then associated to a Serre fibration there is an exact sequence of homotopy groups
\begin{equation*}
    \cdots \to \pi_j(F,e) \to \pi_j(E,e) \to \pi_j(B,b) \to
    \pi_{j-1} (F,e) \to \pi_{j-1}(E,e)\to  \cdots .
\end{equation*}

If $A\to A/J$ is a surjective morphism of Banach algebras, then the induced morphism 
\[
GL_n({A}) \longrightarrow GL_n({A/J})
\]
is a Serre fibration for every $n$, with fiber $GL_n({J})$. The $K$-theory exact sequence
\begin{equation*}
    \cdots \to K_{j+1}({J}) \to K_{j+1}({A}) \to K_{j+1}({A/J}) \to K_{j}(J) \to K_j (A) \to \cdots 
\end{equation*}
is  the  associated  exact sequence of homotopy groups (or rather the direct limit over $n$ of these). 
The sequence is $2$-periodic in $j$, leading to the six-term sequences mentioned earlier. 

The above leads immediately to the following sufficient condition for a commuting square \eqref{eq-commuting-square} with the pullback property to have the Mayer-Vietoris property (the pullback property allows us to identify  $GL_n(A)$ with the fiber of the map in the lemma):

\begin{lemma}
Suppose given a commuting   square of Banach algebras  \eqref{eq-commuting-square} with the pullback property.  If for every $n$ the continuous map 
\begin{equation}
    \label{eq-fibration-map}
\pi  \colon GL_n(B)\times GL_n(C)\longrightarrow GL_n(D)
\end{equation}
given by the formula 
\[
\pi (S,T) \longmapsto  \varphi (S)^{-1}\psi ( T)
\]
 is a Serre fibration, then the square has the Mayer-Vietoris property. \qed
\end{lemma}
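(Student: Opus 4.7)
The plan is to show that, at the level of $GL_n$, the canonical morphism $A \to \DMC(\varphi, \psi)$ realizes the inclusion of the fiber of $\pi$ into its homotopy fiber; since $\pi$ is a Serre fibration by hypothesis, this inclusion is a weak homotopy equivalence, and passing to $K$-theory via the homotopy-group definition then yields the Mayer-Vietoris property.

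First I would identify $\pi^{-1}(1_n)$ with $GL_n(A)$. A pair $(S,T) \in GL_n(B) \times GL_n(C)$ lies in $\pi^{-1}(1_n)$ exactly when $\varphi(S) = \psi(T)$, which by the pullback property is equivalent to $(S,T)$ arising from a unique element $a \in M_n(A)$; applying the pullback property once more to the candidate inverse $(S^{-1}, T^{-1})$ supplies an inverse for $a$ in $M_n(A)$, so $a \in GL_n(A)$.

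Next I would identify $GL_n(\DMC(\varphi,\psi))$ with the standard path-space model of the homotopy fiber of $\pi$ over $1_n$. Unwinding the definition, an element of $GL_n(\DMC(\varphi,\psi))$ is a triple $(S, f, T)$ with $S \in GL_n(B)$, $T \in GL_n(C)$, and a path $f \colon [0,1] \to GL_n(D)$ satisfying $f(0) = \varphi(S)$ and $f(1) = \psi(T)$. The assignment
\[
(S, f, T) \longmapsto \bigl((S,T),\; t \mapsto \varphi(S)^{-1} f(t)\bigr)
\]
is a homeomorphism onto the homotopy fiber
\[
\bigl\{\, ((S,T), \gamma) : \gamma(0) = 1_n,\; \gamma(1) = \pi(S,T)\,\bigr\},
\]
with inverse $((S,T), \gamma) \mapsto (S,\; t \mapsto \varphi(S)\gamma(t),\; T)$. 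Under this homeomorphism the canonical morphism, restricted to $GL_n(A) = \pi^{-1}(1_n)$, sends a fiber element $(S,T)$ to $(S,T)$ paired with the constant path at $1_n$, which is precisely the standard inclusion of the fiber into the homotopy fiber.

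Finally, since $\pi$ is a Serre fibration, the inclusion of its fiber into its homotopy fiber is a weak equivalence, so the canonical morphism induces an isomorphism on $\pi_j(GL_n(-))$ for all $j \geq 0$ and all $n$. Passing to the direct limit over $n$ and shifting degree by one yields the isomorphism $K_{j+1}(A) \cong K_{j+1}(\DMC(\varphi,\psi))$ required by the Mayer-Vietoris property. The only step that requires genuine care is the explicit identification in the third paragraph: verifying that the inverse map really lands in $GL_n(\DMC(\varphi,\psi))$ and that the homeomorphism intertwines the canonical morphism with the fiber-into-homotopy-fiber inclusion. Everything else is either formal bookkeeping or a direct appeal to standard homotopy theory.
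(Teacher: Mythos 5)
Your proof is correct and realizes the argument the paper leaves implicit: the lemma is stated without proof, following a discussion of the homotopy-group definition of $K$-theory and the fibration long exact sequence, and the parenthetical hint (``the pullback property allows us to identify $GL_n(A)$ with the fiber'') is all the paper offers. Your explicit homeomorphism identifying $GL_n(\DMC(\varphi,\psi))$ with the standard path-space model of the homotopy fiber of $\pi$ over $1_n$, under which the canonical morphism becomes precisely the inclusion of the fiber $\pi^{-1}(1_n)\cong GL_n(A)$ into the homotopy fiber, is exactly the observation that makes the lemma immediate from the fact that for Serre fibrations this inclusion is a weak equivalence; passing to $\varinjlim_n$ and invoking periodicity then gives the isomorphism on $K$-theory in all degrees.
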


Thanks to this, to prove Theorem~\ref{thm-improved-m-v-thm} it suffices to prove the following result: 

\begin{theorem}
\label{thm-improved-m-v-thm2}
Under  the hypotheses of Theorem \textup{\ref{thm-improved-m-v-thm}},  
 the continuous map  $\pi $ in \eqref{eq-fibration-map}
 is a Serre fibration.
\end{theorem}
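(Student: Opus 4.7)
My plan is to produce continuous local sections of $\pi$ via a Banach-space implicit function theorem and then to recognize $\pi$ as a principal $GL_n(A)$-bundle over its image using the pullback property; the Serre fibration property follows from local triviality. First I would compute the Fr\'echet derivative at a general point $(S_0,T_0)$ with $U_0=\pi(S_0,T_0)$,
\[
D\pi_{(S_0,T_0)}(s,t)=\varphi(S_0)^{-1}\bigl(\psi(t)-\varphi(s)U_0\bigr),
\]
and show it is surjective. The algebraic key is that right-translation by $S_0^{-1}$ is a bijection of $M_n(B)$, so $\varphi[M_n(B)]U_0=\varphi[M_n(B)S_0^{-1}]\psi(T_0)=\varphi[M_n(B)]\psi(T_0)$, and analogously $\psi[M_n(C)]=\psi[M_n(C)]\psi(T_0)$. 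The image of $D\pi_{(S_0,T_0)}$ therefore equals $(\varphi[M_n(B)]+\psi[M_n(C)])\psi(T_0)$, which is $M_n(D)$ by the hypothesis $\varphi[B]+\psi[C]=D$ applied entrywise and invertibility of $\psi(T_0)$.

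\textbf{Local section.} Since $D\pi_{(S_0,T_0)}$ is a surjective continuous linear map between Banach spaces, Bartle-Graves supplies a continuous positively-homogeneous right inverse $L$ with $\|L(y)\|\le c\|y\|$. A Newton iteration $(S_{k+1},T_{k+1})=(S_k,T_k)+L\bigl(U-\pi(S_k,T_k)\bigr)$ then converges quadratically on a neighborhood of $U_0$, using the $C^1$-regularity of $\pi$; its limit yields a continuous local section $\sigma\colon V\to GL_n(B)\times GL_n(C)$ of $\pi$ with $\sigma(U_0)=(S_0,T_0)$.

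\textbf{From local sections to Serre fibration.} The formula $R\cdot(S,T)=(\beta(R)S,\gamma(R)T)$ defines a free left action of $GL_n(A)$ on $GL_n(B)\times GL_n(C)$ which preserves $\pi$, because $\varphi\beta=\psi\gamma$, and the pullback property identifies its orbits with the fibers of $\pi$. Combining this action with the local section produces a local trivialization $(U,R)\mapsto R\cdot\sigma(U)$ from $V\times GL_n(A)$ onto $\pi^{-1}(V)$, exhibiting $\pi$ as a locally trivial principal $GL_n(A)$-bundle over its image and therefore as a Serre fibration.

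\textbf{Main obstacle.} The delicate step is the Newton iteration: a Bartle-Graves section is continuous but not generally Lipschitz, so the standard contraction-mapping proof of the implicit function theorem does not apply directly. One leans instead on the quadratic convergence of Newton's method, which requires only continuity of $L$ together with $C^1$-regularity of $\pi$ and a uniform open-mapping constant near $(S_0,T_0)$; this is the content of the Lyusternik-Graves openness principle. The density hypothesis on $\varphi$ or $\psi$ has not appeared above; I suspect it is needed when this fibration is deployed in the Mayer-Vietoris application (for instance to align the image of $\pi$ with the relevant components of $GL_n(D)$) rather than in the fibration property itself.
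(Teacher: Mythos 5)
Your overall strategy matches the paper's: surjectivity of the Fr\'echet derivative from the hypothesis $\varphi[B]+\psi[C]=D$, an implicit-function-theorem argument to produce continuous local sections, and a principal $GL_n(A)$-bundle structure over the image of $\pi$.  Two remarks on method, and then a genuine gap.

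On the local section, the paper does not attempt a Newton iteration with a Bartle--Graves selection.  It applies Graves's open-mapping theorem to the induced map $\boldsymbol{F}\colon C_b(\mathcal{Y},\mathcal{X})\to C_b(\mathcal{Y},\mathcal{Y})$ given by post-composition with $F$; Bartle--Graves is invoked only to ensure that the corresponding linear operator $\boldsymbol{L}$ is surjective.  Openness of the functionalized map $\boldsymbol{F}$ then yields a continuous local section of $F$ directly (take the preimage of a bounded continuous map that equals the identity near $0$).  This sidesteps the non-Lipschitz issue with Bartle--Graves selections that you correctly flag.  Your stationary iteration can be pushed through, since the linearization error is quadratic and the bound $\|L(y)\|\le c\|y\|$ gives uniform control and uniform convergence (hence continuity of the limit in the parameter $U$), but the paper's route is cleaner.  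Also, the paper constructs the section only at the identity of $GL_n(D)$ and then translates by the right $G$-action; you work at a general base point, which is essentially equivalent.

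The real gap is in passing from ``principal bundle over the image'' to ``Serre fibration.''  Local sections show the image of $\pi$ is open, but a locally trivial bundle over a merely open subset of $GL_n(D)$ need not satisfy the homotopy lifting property as a map into all of $GL_n(D)$: a homotopy starting inside the image could leave it through its boundary, and then no lift exists at all.  (Compare the inclusion $(0,1)\hookrightarrow[0,1]$, which is a trivial bundle over the open set $(0,1)$ but is not a Serre fibration into $[0,1]$.)  What is needed is that the image be open \emph{and} closed, i.e., a union of connected components of $GL_n(D)$.  This is exactly where the density hypothesis enters in the paper's proof: together with a short exponential-map computation, density of $\varphi[B]$ (or $\psi[C]$) shows each $G$-orbit in $GL_n(D)$ is dense in every component it meets; since both the image of $\pi$ and its complement are unions of $G$-orbits, and the image is open, the image must consist of full components.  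You conjectured that the density hypothesis is only needed downstream, but it is already indispensable for the Serre-fibration claim itself.
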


 The main step in the proof of Theorem~\ref{thm-improved-m-v-thm2} will be as follows: 

\begin{lemma}\label{lem-banfib1}
Under  the hypotheses of Theorem \textup{\ref{thm-improved-m-v-thm}},  
 the continuous map  $\pi $ in \eqref{eq-fibration-map}
has  a local right inverse, defined in a neighborhood $U$ of the identity in $GL_n(D)$.  That is, there is a continuous map 
\[
\sigma\colon  U \longrightarrow  GL_n (B)\times GL_n(C) 
\]
for which the composition $\pi \circ \sigma$  is the inclusion of $U$ into $GL_n(D)$.
\end{lemma}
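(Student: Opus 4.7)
The plan is to rewrite the equation $\pi(S,T)=u$ near $u=I$ as an implicit-function problem and solve it by a Newton-type iteration built on a Bartle-Graves section. Working in the unitizations, points near the identity in $GL_n(B)$, $GL_n(C)$, and $GL_n(D)$ take the form $I+s$, $I+t$, $I+d$ with $s\in M_n(B)$, $t\in M_n(C)$, $d\in M_n(D)$ small. The equation $\pi(I+s,I+t)=I+d$ then reads $F(s,t)=d$, where
\[
F(s,t) := \varphi(I+s)^{-1}\psi(I+t)-I
\]
is real-analytic in a neighborhood of $(0,0)$, vanishes there, and has derivative
\[
L(s,t) := DF(0,0)(s,t) = \psi(t)-\varphi(s).
\]

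The key observation is that the hypothesis $\varphi[B]+\psi[C]=D$, applied entrywise, makes the continuous linear map $L\colon M_n(B)\oplus M_n(C)\to M_n(D)$ surjective. By the Bartle-Graves selection theorem, a consequence of the open mapping theorem, $L$ therefore admits a continuous, positively homogeneous right inverse
\[
\rho\colon M_n(D)\longrightarrow M_n(B)\oplus M_n(C), \qquad \|\rho(d)\|\leq K\|d\|,
\]
for some constant $K>0$. This $\rho$ substitutes for a bounded linear right inverse, and it is the only way the sum hypothesis will enter.

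With $\rho$ in hand, set $(s_0,t_0)=\rho(d)$ and iterate
\[
(s_{k+1},t_{k+1}) = (s_k,t_k) + \rho\bigl(d-F(s_k,t_k)\bigr).
\]
Writing $F=L+R$ with $R$ of second order in $(s,t)$, a standard Taylor estimate produces a constant $C$ with
\[
\|d-F(s_{k+1},t_{k+1})\| \leq C\,\|(s_k,t_k)\|\cdot\|d-F(s_k,t_k)\|,
\]
while $\|(s_{k+1},t_{k+1})\|\leq\|(s_k,t_k)\|+K\|d-F(s_k,t_k)\|$ and $\|d-F(s_0,t_0)\|$ is $O(\|d\|^2)$. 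A short induction then shows that for $\|d\|$ sufficiently small the iterates stay small and the residuals decay geometrically, so the iteration converges uniformly on a neighborhood of $0$ to a continuous limit $(s(d),t(d))$ satisfying $F(s(d),t(d))=d$. Setting $\sigma(I+d)=\bigl(I+s(d),I+t(d)\bigr)$ then gives the desired local section.

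The main obstacle is exactly what blocks a direct appeal to the standard Banach-space implicit function theorem: the kernel of $L$ need not be complemented, so $L$ may admit no bounded \emph{linear} right inverse. Bartle-Graves's continuous but merely positively homogeneous section, combined with the genuinely quadratic smallness of $F-L$, is enough to push the Newton iteration through. Note that the density hypothesis from Theorem~\ref{thm-improved-m-v-thm} plays no role in this local-at-identity step; it will enter only afterwards, when the local section is propagated to yield the full Serre fibration property.
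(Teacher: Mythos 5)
Your proposal is correct, and it takes a genuinely different route from the paper to the same conclusion. The paper packages the analysis into Corollary~\ref{cor-cintinuous-implicit-function-thm}, whose proof applies Graves' open-mapping-type theorem not to $F$ directly but to the induced map $\boldsymbol F\colon C_b(\mathcal Y,\mathcal X)\to C_b(\mathcal Y,\mathcal Y)$ on function spaces, with Bartle--Graves invoked only to show $\boldsymbol L$ is surjective. Continuity of the section then drops out because $\boldsymbol F$'s image already lives in a space of continuous maps. You instead run a Newton iteration directly, using a positively homogeneous Bartle--Graves section $\rho$ of $L$ itself (a mild strengthening of the paper's Theorem~\ref{thm-bartle-graves}, obtained by rescaling the section radially), and get continuity of $d\mapsto(s(d),t(d))$ from the uniform geometric decay of the residuals. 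Your choice of $F(s,t)=\varphi(I+s)^{-1}\psi(I+t)-I$ also differs cosmetically from the paper's $\log(\exp(-\varphi(S))\exp(\psi(T)))$, but both have linearization $\psi(T)-\varphi(S)$ and the required quadratic remainder estimate, so this is immaterial. The trade-off: the paper's argument is shorter to state once the function-space trick is in hand and reuses a clean general-purpose corollary, while yours is more elementary and self-contained, avoiding the detour through $C_b$-spaces at the cost of carrying out the iteration estimates explicitly. Both correctly identify that surjectivity of $L$ is where $\varphi[B]+\psi[C]=D$ enters, and that the dense-image hypothesis is irrelevant to this local step. One small point worth spelling out if you write this up in full: the identity $r_{k+1}=-[R(x_{k+1})-R(x_k)]$ uses that $L\circ\rho=\mathrm{id}$ exactly (not just approximately), so the linear part of $F$ cancels against $r_k$ on the nose; this is what makes the merely quadratic control on $R$ sufficient even though $\rho$ is nonlinear.
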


\begin{proof}[Proof of Theorem~\ref{thm-improved-m-v-thm2}, assuming Lemma~\ref{lem-banfib1}]
The group 
\[G = GL_n(B){\times} GL_n(C)
\]
acts on itself by right multiplication,   and on  $GL_n(D)$ by the formula
\[
R\cdot (S,T) = \varphi (S)^{-1} R \psi (T) .
\]
The map $\pi $  in \eqref{eq-fibration-map} is $G$-equivariant, so its image is   a union of $G$-orbits.  
This, together with the assumption that one or both of $\varphi $ or $\psi $ has dense image   and a computation using the exponential map, shows that each $G$-orbit in $GL_n(D)$ is dense in each component of $GL_n(D)$ that it intersects. 

The existence of the local section $\sigma$ implies that the image of $\pi $ is open. Now, the complement of the image is also a union of $G$-orbits,  and so from the above it follows that the image is a union of components in $GL_n(D)$.

The fiber of $\pi $ over the identity element of $GL_n(D)$ is 
\[
GL_n(A)\cong \{ \,  (\beta (Q), \gamma(Q))\in GL_n(B){\times} GL_n(C) : Q \in GL_n (A)\,\}.
\]
If $U\subseteq GL_n(D)$ is the open set on which the section $\sigma$ is defined, and if  for $(S,T)\in \pi^{-1} [ U]$ we set 
\[
\theta(S,T) = \sigma (\pi (S,T))\cdot (S^{-1}, T^{-1}),
\]
using multiplication in  $GL_n(B){\times} GL_n(C)$, then in fact $\theta(S,T)\in GL_n(A)$ and 
the formula 
\[
(S,T) \longmapsto \bigl ( \theta(S,T) , \pi (S,T)\bigr )
\]
defines a   $GL_n(A)$-equivariant homeomorphism from $\pi ^{-1}[U]$ to $GL_n(A)\times U$ (for the obvious left $GL_n(A)$-actions).   So $\pi ^{-1}[U]$ is a trivial   principal $GL_n(A)$-bundle over $U$.  Using right $G$-equivariance, we find that the map 
\[
\pi \colon  GL_n(B){\times}GL_n(C) \longrightarrow GL_n(D)
\]
is a principal  $GL_n(A)$-bundle over its image,  an open and closed subset of $GL_n(D)$. Since principal bundles are fibrations  the proof is complete.
\end{proof}

We shall construct the local right inverse in Lemma~\ref{lem-banfib1} using a version of the implicit function theorem, the essential part of which is the following simple nonlinear version of the open mapping theorem  (we have simplified the original statement   by specializing it).

\begin{thm}[{\cite[Thm~1]{Graves50}}]
\label{thm-gravesthm1}
Let $\mathcal{X}$ and $\mathcal{Y}$ be Banach spaces and  let $F$ be a continuous function from a neighborhood of $0\in \mathcal{X}$ into $\mathcal{Y}$ such that $F(0)=0$.   If there exists a surjective continuous linear operator $L\colon \mathcal{X} \to \mathcal{Y}$  such that 
\[
\bigl \|F(x_1) - F(x_2) - L(x_1 {-} x_2) \bigr \| =  \mathcal{O} \bigl ( (\|x_1\|{+}\|x_2\|)\|x_1{-}x_2\|\bigr ) ,
\]
then the element $0\in \mathcal{Y}$ lies in the interior of the image of $F$.
\end{thm}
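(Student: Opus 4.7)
The plan is to prove Graves' theorem by combining the usual open mapping theorem for the surjective linear operator $L$ with a Newton-style iteration that uses this open mapping as an approximate inverse for $F$. The starting observation is that since $L \colon \mathcal{X} \to \mathcal{Y}$ is continuous, linear, and surjective between Banach spaces, the classical open mapping theorem provides a constant $C>0$ such that for every $y\in\mathcal{Y}$ there exists $z\in \mathcal{X}$ with $L(z)=y$ and $\|z\|\le C\|y\|$. This furnishes a (not necessarily continuous) right inverse for $L$ with norm control, which is all that will be needed.

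Next, fix a constant $M$ and radius $r>0$ such that
$\|F(x_1)-F(x_2)-L(x_1{-}x_2)\|\le M(\|x_1\|+\|x_2\|)\|x_1{-}x_2\|$ whenever $\|x_1\|,\|x_2\|\le r$, and $F$ is defined on the ball of radius $r$. Choose $\delta>0$ small enough that $2C\delta\le r$ and $4MC^{2}\delta\le \tfrac12$. Given $y\in\mathcal{Y}$ with $\|y\|\le \delta$, define a sequence iteratively by $x_0=0$ and $x_{n+1}=x_n+z_n$, where $z_n$ is chosen (via the open mapping step) to satisfy $L(z_n)=y-F(x_n)$ and $\|z_n\|\le C\|y-F(x_n)\|$. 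I would prove by induction that $\|x_n\|\le 2C\|y\|$ and $\|y-F(x_n)\|\le 2^{-n}\|y\|$ for all $n$.

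The inductive step is the crux of the argument. The key identity is
\begin{equation*}
y-F(x_{n+1}) \;=\; -\bigl(F(x_n{+}z_n)-F(x_n)-L(z_n)\bigr),
\end{equation*}
since $L(z_n)$ was chosen to equal $y-F(x_n)$. Applying the hypothesis gives
$\|y-F(x_{n+1})\|\le M(\|x_n\|+\|x_{n+1}\|)\|z_n\|\le 4MC^2\|y\|\cdot 2^{-n}\|y\|\le \tfrac12\cdot 2^{-n}\|y\|$, which is the required decay; and the telescoping bound $\|x_{n+1}\|\le \sum_{k\le n}\|z_k\|\le C\|y\|\sum_k 2^{-k}\le 2C\|y\|$ keeps the iteration inside the ball of radius $r$ where the hypothesis is valid.

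The sequence $(x_n)$ is therefore Cauchy, so it converges to some $x\in\mathcal{X}$ with $\|x\|\le 2C\|y\|$, and passing to the limit in $\|y-F(x_n)\|\le 2^{-n}\|y\|$ together with continuity of $F$ yields $F(x)=y$. Thus every $y$ with $\|y\|\le \delta$ lies in the image of $F$, which is precisely the statement that $0$ is an interior point of that image. The main obstacle I anticipate is purely bookkeeping: the error term in the hypothesis grows linearly in $\|x_1\|+\|x_2\|$, so one must pick $\delta$ small enough that the factor $4MC^{2}\delta$ in the contraction estimate is strictly less than $1$, and one must simultaneously ensure that the iterates never escape the neighborhood on which the hypothesis is assumed. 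Both requirements are absorbed into a single smallness condition on $\delta$, but they must be tracked together throughout the induction.
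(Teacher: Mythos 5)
The paper does not prove this result itself; it cites it directly from Graves (1950), with a simplification as noted in the text, so there is no in-paper argument to compare against. Your Newton-style iteration is a correct reconstruction of the standard proof of Graves' theorem and is close in spirit to Graves' own argument: use the open mapping theorem for $L$ to obtain a norm-controlled right inverse, set $x_{n+1}=x_n+z_n$ with $L(z_n)=y-F(x_n)$ and $\|z_n\|\le C\|y-F(x_n)\|$, and observe that the defining identity forces $y-F(x_{n+1})=-\bigl(F(x_n{+}z_n)-F(x_n)-L(z_n)\bigr)$, so the quadratic-error hypothesis drives geometric decay of the residual. The one bit of bookkeeping worth making fully explicit in a write-up is the order of the induction: from the decay bounds $\|y-F(x_k)\|\le 2^{-k}\|y\|$ for $k\le n$ you first get $\|x_{n+1}\|\le\sum_{k\le n}\|z_k\|\le 2C\|y\|\le r$, and only then is the hypothesis applicable at the pair $(x_n,x_{n+1})$ to yield $\|y-F(x_{n+1})\|\le 4MC^{2}\delta\cdot 2^{-n}\|y\|\le 2^{-(n+1)}\|y\|$; you flag this, and the constants you chose ($2C\delta\le r$, $4MC^{2}\delta\le\tfrac12$) do the job.
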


In order to construct a local section from this result we use the well-known Bartle-Graves theorem: 

\begin{thm}[{See  \cite[Thm~4]{BartleGraves52}}]\label{thm-bartle-graves}
Every continuous and surjective linear operator between Banach spaces has a continuous \textup{(}but not necessarily linear\textup{)} right inverse, which may be chosen so that it maps bounded subsets of $\mathcal{Y}$ to bounded subsets of $\mathcal{X}$. 
\end{thm}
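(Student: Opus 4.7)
The plan is to combine the open mapping theorem with a partition of unity argument, then iterate to produce an exact right inverse with norm control. By the open mapping theorem, since $T\colon \mathcal{X}\to\mathcal{Y}$ is continuous, linear, and surjective, there exists a constant $K>0$ such that for every $y\in\mathcal{Y}$ some $x\in\mathcal{X}$ satisfies $Tx=y$ and $\|x\|\le K\|y\|$. A choice function provides a (generally discontinuous) right inverse with this norm bound; the task is to upgrade such a choice to a continuous map without sacrificing too much of the bound.

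First I would construct, for each $\epsilon>0$, a continuous map $\phi_{\epsilon}\colon\mathcal{Y}\to\mathcal{X}$ that is a near-section, in the sense that
\[
\|T\phi_{\epsilon}(y)-y\|\le \epsilon\|y\| \quad \text{and}\quad \|\phi_{\epsilon}(y)\|\le (K+1)\|y\|.
\]
For this, pick for each $y_0\ne 0$ a preimage $x_0\in T^{-1}[y_0]$ with $\|x_0\|\le K\|y_0\|$, and observe that on a small cone-shaped open neighborhood $U_{y_0}$ around $y_0$ the rescaled element $(\|y\|/\|y_0\|)\,x_0$ is a preimage of a vector within distance $\epsilon\|y\|$ of $y$. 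The metric space $\mathcal{Y}\setminus\{0\}$ is paracompact, so I can refine $\{U_{y_0}\}$ to a locally finite cover and choose a subordinate continuous partition of unity $\{\rho_{\alpha}\}$; setting $\phi_{\epsilon}(y)=\sum_{\alpha}\rho_{\alpha}(y)\cdot(\|y\|/\|y_{\alpha}\|)\,x_{\alpha}$ gives the required map because convex combinations preserve the norm estimate and linearity of $T$ preserves the approximation estimate. Extend by $\phi_{\epsilon}(0)=0$; continuity at the origin follows from the bound $\|\phi_{\epsilon}(y)\|\le (K+1)\|y\|$.

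The second step is a telescoping iteration. Set $\sigma_{0}=\phi_{1/2}$, let $r_{1}(y)=y-T\sigma_{0}(y)$, which satisfies $\|r_{1}(y)\|\le \tfrac{1}{2}\|y\|$, and inductively define $\sigma_{n+1}(y)=\sigma_{n}(y)+\phi_{1/2}(r_{n+1}(y))$ with $r_{n+1}(y)=y-T\sigma_{n}(y)$. The increments are bounded in norm by $(K+1)\cdot 2^{-n}\|y\|$, so the partial sums converge uniformly on bounded sets to a continuous map $\sigma\colon\mathcal{Y}\to\mathcal{X}$ satisfying $T\sigma(y)=y$ and $\|\sigma(y)\|\le 2(K+1)\|y\|$. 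The linear growth bound immediately gives the required property that $\sigma$ maps bounded sets to bounded sets.

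The main obstacle I expect is the construction of $\phi_{\epsilon}$: one must choose the open cones $U_{y_{0}}$ carefully enough that the rescaling trick genuinely produces an $\epsilon$-small error and a uniform norm bound, while still covering $\mathcal{Y}\setminus\{0\}$ by a locally finite refinement. An alternative that bypasses this bookkeeping is to invoke Michael's continuous selection theorem applied to the lower semicontinuous closed-convex-valued map $y\mapsto T^{-1}[y]\cap\overline{B_{\mathcal{X}}(0,\,K\|y\|{+}1)}$, which delivers both continuity and the boundedness property at once; I would mention this route as a cleaner but less elementary alternative.
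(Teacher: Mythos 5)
Your argument is correct. Note that the paper does not prove this statement at all: it is quoted as a known result with a citation to Bartle--Graves, so there is no internal proof to compare against. What you have written is essentially the classical Bartle--Graves argument: the open mapping theorem gives a constant $K$ and a (discontinuous) bounded choice of preimages; paracompactness of the metric space $\mathcal{Y}\setminus\{0\}$ and a partition of unity subordinate to the cover by sets $U_{y_0}=\{y:\|y-y_0\|<\delta\|y_0\|\}$ produce a continuous $\epsilon$-approximate section $\phi_\epsilon$ with $\|\phi_\epsilon(y)\|\le K\|y\|$ (your $(K+1)$ is a harmless overestimate, since each rescaled preimage already satisfies $\|(\|y\|/\|y_\alpha\|)x_\alpha\|\le K\|y\|$ and convex combinations preserve this); and the geometric-series iteration with error $\|r_{n}(y)\|\le 2^{-n}\|y\|$ upgrades this to an exact continuous section with $\|\sigma(y)\|\le 2(K+1)\|y\|$, which in particular maps bounded sets to bounded sets as required. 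The only place worth tightening is the verification that the rescaling trick gives an $\epsilon$-error on $U_{y_0}$: from $\|(\|y\|/\|y_0\|)y_0-y\|\le 2\|y-y_0\|$ and $\|y\|>(1-\delta)\|y_0\|$ one needs $2\delta/(1-\delta)\le\epsilon$, so the radius $\delta\|y_0\|$ must be chosen with this inequality in mind; once that is written out the partition-of-unity step is routine. Your alternative via Michael's selection theorem applied to $y\mapsto T^{-1}[y]\cap\overline{B_{\mathcal{X}}(0,K\|y\|+1)}$ is also valid, though it still requires checking lower semicontinuity of this convex-valued map (which again rests on the open mapping estimate), so it is cleaner to state but not really shorter to justify.
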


\begin{corollary}
\label{cor-cintinuous-implicit-function-thm}
Let $\mathcal{X}$ and $\mathcal{Y}$ be Banach spaces and  let $F$ be a continuous function from a neighborhood of $0\in \mathcal{X}$ into $\mathcal{Y}$ such that $F(0)=0$.   If there exists a surjective continuous linear operator $L\colon \mathcal{X} \to \mathcal{Y}$  such that 
\[
\bigl \|F(x_1) - F(x_2) - L(x_1 {-} x_2) \bigr \| =  \mathcal{O} \bigl ( (\|x_1\|{+}\|x_2\|) \|x_1{-}x_2\|\bigr )
\]
then $F$ has a continuous local right-inverse,  defined on some neighborhood of $0\in \mathcal{Y}$.
\end{corollary}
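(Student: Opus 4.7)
The plan is to combine the Bartle--Graves theorem with a Newton-type contraction argument that exploits the quadratic-flavoured estimate on $F - L$.  Concretely, I will produce the local right inverse as the uniform limit of the iterates $x_0(y) = 0$ and $x_{n+1}(y) = x_n(y) + s\bigl(y - F(x_n(y))\bigr)$, where $s\colon \mathcal{Y}\to\mathcal{X}$ is a carefully chosen right inverse of $L$.

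First I apply Theorem~\ref{thm-bartle-graves} to obtain a continuous right inverse $s_0$ of $L$ that is bounded on bounded sets, and, after translating by $s_0(0)\in \Ker L$, arrange $s_0(0) = 0$.  Setting $M_0 = \sup\{\|s_0(y)\| : \|y\| = 1\}$, which is finite, I \emph{homogenise} by defining
\[
s(y) = \|y\|\,s_0\bigl(y/\|y\|\bigr) \quad (y \neq 0), \qquad s(0) = 0.
\]
Linearity of $L$ gives $L\circ s = \id$, and the definition forces the global linear bound $\|s(y)\| \leq M_0 \|y\|$, which also ensures continuity of $s$ at the origin.

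With $s$ in hand, I set up the iteration.  The hypothesis supplies $M, r_0 > 0$ such that
\[
\|F(x_1) - F(x_2) - L(x_1 - x_2)\| \leq M(\|x_1\| + \|x_2\|)\|x_1 - x_2\|
\]
for $x_1, x_2$ in the closed $r_0$-ball of $\mathcal{X}$.  Fix $\epsilon \in (0, r_0)$ with $q := 2M M_0 \epsilon < 1$, restrict to $\|y\| \leq \delta := \epsilon(1-q)/M_0$, and write $\rho_n = y - F(x_n)$.  Since $L(x_{n+1} - x_n) = L(s(\rho_n)) = \rho_n$, the hypothesis yields
\[
\rho_{n+1} = -\bigl[F(x_{n+1}) - F(x_n) - L(x_{n+1} - x_n)\bigr],
\]
whose norm is at most $2M\epsilon \cdot M_0 \|\rho_n\| = q\|\rho_n\|$ \emph{provided} $\|x_n\|, \|x_{n+1}\| \leq \epsilon$.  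Both conditions are maintained simultaneously by an easy induction giving $\|x_n\| \leq M_0\|y\|/(1-q) \leq \epsilon$ and $\|\rho_n\| \leq q^n \|y\|$.

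Consequently $(x_n)$ is Cauchy with geometric rate, its limit $\sigma(y)$ lies in the closed $\epsilon$-ball and satisfies $F(\sigma(y)) = y$, and each iterate $x_n(\cdot)$ is continuous in $y$, so the uniform convergence makes $\sigma$ continuous.  The main obstacle I anticipate is the linear-growth bound $\|s(y)\| \leq M_0 \|y\|$ needed in the contraction step: Bartle--Graves supplies only continuity plus boundedness on bounded sets, and without the positive-homogeneity rescaling above the error estimate in the iteration could not be closed.  Once that bound is in place, the rest is a routine Banach fixed-point argument.
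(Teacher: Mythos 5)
Your proof is correct, but it takes a genuinely different route from the paper's. The paper's proof is a short, abstract argument: it applies Theorem~\ref{thm-gravesthm1} (Graves) not to $F$ itself but to the induced map $\boldsymbol{F}\colon C_b(\mathcal{Y},\mathcal{X})\to C_b(\mathcal{Y},\mathcal{Y})$ given by postcomposition with $F$, using Bartle--Graves (Theorem~\ref{thm-bartle-graves}) only to verify that the corresponding $\boldsymbol{L}$ is surjective so that Graves applies; a preimage of a suitably truncated identity function then \emph{is} the desired continuous local section. Your argument instead reproves a ``Graves theorem with continuous dependence on the target'' directly, by running a Newton--Picard iteration $x_{n+1}=x_n+s(y-F(x_n))$ and tracking continuity of each iterate in $y$. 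The key technical point you correctly identify and resolve is that the raw Bartle--Graves section $s_0$ carries no useful growth bound near the origin, whereas the positive-homogeneous rescaling $s(y)=\|y\|\,s_0(y/\|y\|)$ (normalized so $s_0(0)=0$) produces the linear bound $\|s(y)\|\leq M_0\|y\|$ needed to close the contraction estimate, while preserving both continuity and the identity $L\circ s=\operatorname{id}$. Your inductive bookkeeping ($\|x_n\|\leq M_0\|y\|/(1-q)\leq\epsilon$ and $\|\rho_n\|\leq q^n\|y\|$) is sound, the limit satisfies $F(\sigma(y))=y$ by continuity of $F$, and continuity of $\sigma$ follows from uniform convergence of the (continuous) iterates. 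The trade-off: the paper's function-space lift is shorter and reuses Graves as a black box, at the cost of an indirect argument and the small point that one must choose an appropriate bounded truncation of the identity in $C_b(\mathcal{Y},\mathcal{Y})$; your version is longer and essentially inlines a parametric proof of Graves' theorem, but it is constructive, self-contained modulo Bartle--Graves, and gives explicit geometric error estimates.
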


\begin{proof}  
Denote by $C_b (\mathcal{Y},\mathcal{X}) $ the Banach space of bounded, continuous functions from $\mathcal{Y}$ to $\mathcal{X}$ (the norm is the supremum norm associated to the norm on $\mathcal{X}$) and define  $C_b (\mathcal{Y},\mathcal{Y}) $ similarly.
Apply Theorem~\ref{thm-gravesthm1} to the continuous map 
\[
\boldsymbol{F}\colon 
C_b (\mathcal{Y},\mathcal{X}) 
\longrightarrow 
C_b (\mathcal{Y},\mathcal{Y})
\]
given by composition with $F$ (it is defined on the open subset of $C_b (\mathcal{Y},\mathcal{X}) $ consisting of functions from $\mathcal{Y}$ to the open subset of $\mathcal{X}$ on which $F$ is defined) and   the continuous linear operator 
\[
\boldsymbol{L}: C_b(\mathcal{Y},\mathcal{X}) \to C_b(\mathcal{Y},\mathcal{Y})
\]
given by composition with $L$ (it follows from the Bartle-Graves theorem that $\boldsymbol{L}$ is  surjective).  
\end{proof}

\begin{proof}[Proof of Lemma~\ref{lem-banfib1}]
Define a continuous map from a neighborhood of zero in the Banach space  $M_n(B)\oplus M_n(C)$ into the Banach space $M_n(D)$ by the formula 
\[
F(S,T) = \log \bigl (\exp(-\varphi (S))\exp(\psi (T))\bigr ) .
\]
Define a bounded linear map from $M_n(B)\oplus M_n(C)$ into $M_n(D)$ by 
\[
L(S,T) = \psi  (T) - \varphi (S) .
\]
It is surjective, thanks to the hypotheses of Theorem~\ref{thm-improved-m-v-thm}.
Now apply Corollary~\ref{cor-cintinuous-implicit-function-thm}. Compose the local right inverse that  it provides with  the logarithm and  exponential maps on $GL_n(D)$ and $M_n(B)\oplus M_n(C)$ respectively, to obtain a local right inverse to the map \eqref{eq-fibration-map}.
\end{proof}

\section{The Gelfand Theorem and Continuity}
\label{sec-gelfand-theorem}

A famous theorem of Gelfand asserts that an element of a commutative Banach algebra with unit is invertible if and only if its Gelfand transform is invertible \cite[Ch I, \S4]{GelfandRaikovShilov64}.  This has the following consequence for $K$-theory, which we shall use several times:

\begin{theorem}
\label{thm-gelfand-karoubi}
If a morphism $\alpha\colon A {\to} B$ of commutative unital Banach algebras has dense image and induces a homeomorphism 
\[
\alpha^* \colon \Spec (B)\stackrel \cong \longrightarrow  \Spec(A),
\]
then it induces an isomorphism on $K$-theory.
\end{theorem}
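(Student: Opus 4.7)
The plan is to combine the Gelfand theorem, the density hypothesis, and a direct approximation argument on invertible matrices.  The first step is to establish a matrix-level spectral invariance: for every $n$ and every $a \in M_n(A)$, one has $a \in GL_n(A)$ if and only if $\alpha(a) \in GL_n(B)$.  Using commutativity, Cramer's rule reduces this to the scalar statement, which in turn follows from the Gelfand theorem applied to $\det a$ and $\alpha(\det a) = \det \alpha(a)$, together with the identification $\Spec(B) \cong \Spec(A)$ via $\alpha^*$.  Combined with the density of $\alpha[A]$ in $B$ and the openness of $GL_n(B)$ in $M_n(B)$, this spectral invariance shows that $\alpha[GL_n(A)]$ is dense in $GL_n(B)$.

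The main work is then to prove that $\alpha_*\colon K_1(A) \to K_1(B)$ is an isomorphism.  Surjectivity is immediate from the density and openness just noted: any $b \in GL_n(B)$ is joined to some $\alpha(a)$ with $a \in GL_n(A)$ by a short straight-line path lying in $GL_n(B)$.  For injectivity, suppose $a \in GL_n(A)$ and $\gamma\colon [0,1] \to GL_n(B)$ is a path from $\alpha(a)$ to $I_n$.  Compactness yields $\epsilon > 0$ such that the $\epsilon$-neighborhood of $\gamma([0,1])$ in $M_n(B)$ is contained in $GL_n(B)$.  I would partition $[0,1]$ into $0 = t_0 < \cdots < t_k = 1$ so that $\gamma$ oscillates by less than $\epsilon/3$ on each subinterval, choose $c_i \in M_n(A)$ with $\|\alpha(c_i) - \gamma(t_i)\| < \epsilon/3$ and $c_0 = a$, $c_k = I_n$, and let $\tilde\gamma\colon [0,1] \to M_n(A)$ be the piecewise-linear interpolation of the $c_i$.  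A direct estimate gives $\|\alpha(\tilde\gamma(t)) - \gamma(t)\| < \epsilon$ uniformly in $t$, so $\alpha(\tilde\gamma(t)) \in GL_n(B)$; the spectral invariance from the first step then forces $\tilde\gamma(t) \in GL_n(A)$, producing the required path in $GL_n(A)$ from $a$ to $I_n$.

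For $K_0$ and higher, I would apply the $K_1$ case to the unitalized suspension $\Susp(\alpha)^+\colon \Susp(A)^+ \to \Susp(B)^+$.  These are again commutative and unital; the image is dense by the same piecewise-linear approximation argument; and the spectrum of $\Susp(A)^+$ is the one-point compactification of $\Spec(A) \times (0,1)$, on which $(\Susp(\alpha)^+)^*$ acts as $(\chi, t) \mapsto (\alpha^*(\chi), t)$ and fixes the point at infinity---plainly a homeomorphism.  The suspension isomorphism $K_1(\Susp(A)^+) \cong K_0(A)$ then converts the resulting $K_1$-isomorphism into the $K_0$-isomorphism, and Bott periodicity handles all remaining degrees.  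The principal difficulty throughout is the $K_1$ injectivity step, where one must concretely construct an $A$-valued path realizing a given $B$-valued homotopy; the combination of piecewise-linear interpolation with openness of $GL_n(B)$ and spectral invariance is what carries this through.
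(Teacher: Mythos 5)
Your proof is correct but structurally different from the paper's. Both arguments share the determinant step that reduces invertibility of a matrix over a commutative Banach algebra to invertibility of its determinant, and both then use Gelfand's theorem together with the surjectivity of $\alpha^*\colon\Spec(B)\to\Spec(A)$ to conclude that $\alpha$ detects invertibility. After that the routes diverge. The paper invokes the Karoubi density theorem as a black box; since that theorem is stated for injective $\alpha$, the paper first factors $\alpha$ through $A/\ker\alpha$ and separately shows $K_*(\ker\alpha)=0$, by proving that $GL_n(\ker\alpha)=I+M_n(\ker\alpha)$ is convex and hence contractible, then concludes via the six-term sequence. You instead prove the two-sided spectral invariance ($a\in GL_n(A)$ iff $\alpha(a)\in GL_n(B)$) in a form that needs no injectivity hypothesis, unpack the $\pi_0$ content of Karoubi density by hand through the piecewise-linear approximation of paths in $GL_n(B)$, and reach $K_0$ by applying the $K_1$ case to the unitalized suspension. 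Both routes are sound: yours is more self-contained and bypasses the injective-case reduction entirely, whereas the paper's is shorter given the cited lemma and obtains isomorphisms on all $\pi_j(GL_n)$ at once rather than routing $K_0$ through the suspension.
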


Of course this is an immediate consequence of Novodvorskii's theorem, but the dense image assumption allows for a simple, direct proof, based on the following well-known result: 

\begin{thm}[Karoubi Density  {\cite[Exercise II.6.15]{Karoubi78}}]
Let $\alpha\colon A\to B$ be an injective  morphism of Banach algebras whose image is dense in $B$. If for every $n$ 
\[
\alpha[GL_n(A)] = M_n (\alpha[A]) \cap GL_n (B) ,
\]
then    the group homomorphism 
$
\alpha \colon GL_n(A)\to GL_n(B)
$
induces isomorphisms $\pi_j(GL_n(A)){\to} \pi_j(GL_n(B))$ for every $j$ and every $n$, and so in particular $\alpha$ induces an isomorphism in $K$-theory.\qed
\end{thm}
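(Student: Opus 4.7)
My plan is to prove the isomorphism $\alpha_* \colon \pi_j(GL_n(A)) \to \pi_j(GL_n(B))$ for each $j$ and $n$; the $K$-theory conclusion then follows by passing to the direct limit in $n$. The pivotal consequence of the hypothesis $\alpha[GL_n(A)] = M_n(\alpha[A]) \cap GL_n(B)$, combined with injectivity of $\alpha$, is the biconditional: an element $a \in M_n(A)$ lies in $GL_n(A)$ if and only if $\alpha(a)$ lies in $GL_n(B)$. In particular, any continuous $g \colon X \to M_n(A)$ whose composition with $\alpha$ lands in $GL_n(B)$ automatically lands in $GL_n(A)$.

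The main technical step will be an approximation-and-lifting principle: for any continuous map $f \colon X \to M_n(B)$ on a compact Hausdorff space $X$ and any $\epsilon > 0$, there is a continuous map $g \colon X \to M_n(A)$ with $\|\alpha \circ g - f\|_\infty < \epsilon$. The construction is a partition-of-unity argument. Cover the compact set $f(X)$ by finitely many open balls of radius $\epsilon$ centered at points $\alpha(a_i) \in M_n(\alpha[A])$ (possible because $\alpha[A]$ is dense in $B$), pull back to an open cover of $X$, choose a subordinate partition of unity $\{\rho_i\}$, and set $g = \sum \rho_i a_i$. The critical point is that because the coefficients $a_i$ live in $M_n(A)$ and the $\rho_i$ are real-valued, $g$ is continuous into $M_n(A)$ with its own norm topology, not merely into $M_n(\alpha[A])$ viewed with the possibly coarser subspace topology from $M_n(B)$.

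Surjectivity of $\alpha_*$ is then immediate. Given $f \colon S^j \to GL_n(B)$, pick $\epsilon$ small enough that every $\epsilon$-perturbation of $f$ remains in $GL_n(B)$ (possible by compactness of $f(S^j)$ and openness of $GL_n(B)$), produce $g$ as above, and invoke the biconditional to obtain $g \colon S^j \to GL_n(A)$. The straight-line homotopy $(1-t)f + t(\alpha \circ g)$ stays in $GL_n(B)$ and witnesses $\alpha_*[g] = [f]$.

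For injectivity, suppose $g_0, g_1 \colon S^j \to GL_n(A)$ satisfy $\alpha \circ g_0 \simeq \alpha \circ g_1$ via a homotopy $H \colon S^j \times [0,1] \to GL_n(B)$. After reparameterization in $t$ I may assume $H$ is constant on collars near $t = 0$ and $t = 1$, where it equals $\alpha \circ g_0$ and $\alpha \circ g_1$ respectively. A relative version of the approximation principle then supplies a lift $\tilde H \colon S^j \times [0,1] \to M_n(A)$ matching $g_0$ and $g_1$ on those collars: include in the partition-of-unity sum two extra bump functions $\rho_0(t), \rho_1(t)$ supported on the collars with coefficients $g_0(x), g_1(x)$, and use the approximation only on the interior where these vanish. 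Choosing $\epsilon$ small, $\alpha \circ \tilde H$ remains in $GL_n(B)$, and the biconditional forces $\tilde H$ into $GL_n(A)$, completing the injectivity argument. The main obstacle throughout is precisely this relative matching at the boundary of the cylinder; the collar trick is what makes the approximation compatible with prescribed lifts at the endpoints.
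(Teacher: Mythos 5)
The paper does not actually prove this statement; it cites Karoubi's book \cite[Exercise II.6.15]{Karoubi78} and stamps it with a \textsl{q.e.d.} Your proof is therefore not being compared against an in-paper argument but against the standard one in the literature, and your approach (partition-of-unity approximation into $M_n(\alpha[A])$ combined with the invertibility biconditional) is exactly that standard argument. The biconditional is extracted correctly: if $\alpha(a)\in GL_n(B)$ then $\alpha(a)\in M_n(\alpha[A])\cap GL_n(B)=\alpha[GL_n(A)]$, and injectivity of $\alpha$ forces $a\in GL_n(A)$. The approximation lemma and the observation that $g=\sum\rho_i a_i$ is continuous into $M_n(A)$ in its own norm topology are also right, and this is indeed the crux.

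There is one genuine gap, and it is a base-point issue in the surjectivity step. You take $f\colon S^j\to GL_n(B)$ representing a class in $\pi_j(GL_n(B),I)$ (so $f(\ast)=I$), construct $g\colon S^j\to GL_n(A)$ with $\alpha\circ g$ uniformly close to $f$, and declare that the straight-line homotopy ``witnesses $\alpha_\ast[g]=[f]$.'' But $g(\ast)$ need not equal $I$, so $g$ does not yet represent a class in $\pi_j(GL_n(A),I)$ and the straight-line homotopy is not a based homotopy. This is the same issue you take pains to handle in the injectivity step with collars, so the asymmetry is noticeable. It is easily repaired: either do a relative approximation forcing $g(\ast)=I$ (a bump function on a small disk around $\ast$, interpolating between the constant map $I$ and the generic approximation, works because $f$ is close to $I$ near $\ast$), or replace $g$ by $g\cdot g(\ast)^{-1}$ and observe that $\alpha(g(\ast))^{-1}$ is close to $I$, so $\alpha(g\cdot g(\ast)^{-1})$ remains uniformly close to $f$ and the straight-line homotopy is now based at $I$ throughout. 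For $j\geq 1$ one should also note (or establish alongside the $\pi_0$ case) that since $\alpha\circ g$ lands in the identity component of $GL_n(B)$, $g$ itself lands in the identity component of $GL_n(A)$. With that repair the argument is complete, and the passage to $K$-theory by the direct limit in $n$ is immediate as you say.
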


\begin{proof}[Proof of Theorem~\ref{thm-gelfand-karoubi}]
By adjoining units, if necessary, we may assume that $A$ and $B$ are unital. 
The hypothesis in the theorem implies that every multiplicative linear functional on $A$ vanishes on $\operatorname{kernel}(\alpha)$, since it factors (uniquely) through $\alpha$.  It follows that the induced morphism 
\[
\bar \alpha\colon A / \operatorname{kernel} (\alpha) \longrightarrow B
\]
also satisfies the hypothesis of the theorem.

If $f\in A / \operatorname{kernel} (\alpha) $ and  if the element  $\bar \alpha(f) \in B$ is invertible, then by this hypothesis and by Gelfand's theorem,  $f$   is   invertible in  $A / \operatorname{kernel} (\alpha) $.  
Using determinants, we find that if $[f_{ij}]$ is an $n{\times}n$ matrix over $A/\operatorname{kernel} (\alpha)$, and if  for which $[\bar \alpha (f_{ij})]$ is an invertible matrix over $B$, then $[f_{ij}]$ is itself invertible.  So the Karoubi density theorem applies to $\bar\alpha$.

Now if $a\in \operatorname{kernel} (\alpha)$, then Gelfand's theorem implies that   $1{+}a$ is invertible in $A$. Using 
\[
( 1 + a) ( 1 + a') = 1 + a + a' +aa'
\]
we find that the inverse has the form $1{+}a'$ with $a'\in \operatorname{kernel}(\alpha)$.
Using determinants, again, we find that 
\[
GL_n(\operatorname{kernel}(\alpha)) = I + M_n ( \operatorname{kernel}(\alpha)).
\]
So $GL_n(\operatorname{kernel}(\alpha)) $ is contractible for all $n$, and hence
\[
K_*(\operatorname{kernel}(\alpha)) = 0.
\]
The theorem follows from this, the six-term exact sequence in $K$-theory, and the fact that $\bar \alpha$ induces an isomorphism in $K$-theory.
\end{proof}

Next, suppose given a diagram of Banach algebras and Banach algebra morphisms
\[
A_1 \stackrel {\alpha_1} \longrightarrow A_2 \stackrel{\alpha_2} \longrightarrow A_3 \stackrel {\alpha_3} \longrightarrow \cdots 
\]
in which the   the norm of each morphism $\alpha_r$ is $1$, or less.  The algebraic vector space direct limit is in fact an associative algebra, and the formula 
\[
\| a \| = \lim_{s\to \infty} \| \alpha_{s+r,r}(a)\|_{A_{s+r}} ,
\]
where $a\in A_r$ and where $\alpha_{s+r,r}$ is the composition of the morphisms in the diagram from $A_r$ to $A_{s+r}$, defines a submultiplicative seminorm.  Forming the quotient by the ideal of elements with seminorm $0$, and then completing, we obtain the \emph{Banach algebra direct limit}, which we shall write as $\varinjlim A_r$.  

There are canonical morphisms from each $A_r$ into the direct limit. We shall make frequent use of the following well-known continuity property of $K$-theory, which may be proved using a  variation of the Karoubi density theorem.

\begin{theorem}
\label{thm-k-theory-continuity}
The  induced morphism
\begin{equation*}
\varinjlim K_*(A_r ) \longrightarrow K_* ( \varinjlim A_r)
\end{equation*}
is an isomorphism of abelian groups. \qed
\end{theorem}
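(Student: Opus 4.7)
The plan is to reduce the statement to a density-and-approximation argument inside the general linear groups, in the spirit of the Karoubi density theorem. Writing $B = \varinjlim A_r$, and using that $K_j(A) = \varinjlim_n \pi_{j-1}(GL_n(A))$ for $j > 0$ together with Bott periodicity to handle $j = 0$, and exchanging the order of the two direct limits, I would reduce to showing that for each fixed $n$ and $j$ the canonical map
\[
\varinjlim_{r} \pi_{j-1}(GL_n(A_r)) \longrightarrow \pi_{j-1}(GL_n(B))
\]
is an isomorphism. Units are adjoined as usual so that the discussion of invertibles makes sense.

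The analytic input I would exploit is that the norm on $B$ is the \emph{infimum} of the pulled-back norms: for $a$ in the image of $\alpha_r \colon A_r \to B$ one has $\|a\|_B = \inf_{s \ge r} \|\alpha_{s,r}(a)\|_{A_s}$, because the sequence of norms is non-increasing in $s$ (each morphism in the system is contractive after passing to the infimum norm) and converges to $\|a\|_B$. Two consequences are key. First, $\bigcup_r \alpha_r[M_n(A_r)]$ is dense in $M_n(B)$. Second, if $a \in M_n(A_r)$ with $\|\alpha_r(a)\|_B < 1$, then $\|\alpha_{s,r}(a)\|_{A_s} < 1$ for some sufficiently large $s \ge r$, so that $1 + \alpha_{s,r}(a)$ becomes invertible in (the unitization of) $A_s$ via the Neumann series.

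For surjectivity, I would represent a given class in $\pi_{j-1}(GL_n(B))$ by a continuous map $\varphi \colon S^{j-1} \to GL_n(B)$ with compact image. A standard partition-of-unity argument applied to finitely many balls centered at points of the dense algebraic direct limit produces a uniform $\varepsilon$-approximation $\varphi'$ whose values lie entirely in $\alpha_r[M_n(A_r)]$ for a single sufficiently large $r$. For $\varepsilon$ small, the straight-line homotopy from $\varphi$ to $\alpha_r \circ \varphi'$ stays in $GL_n(B)$, so the two maps represent the same class. Applying the second consequence above to $\varphi(x)^{-1}\alpha_r(\varphi'(x)) - 1$, which is uniformly small in $B$, then shows that for some $s \ge r$ the map $\alpha_{s,r} \circ \varphi'$ actually takes values in $GL_n(A_s)$, giving the desired preimage.

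For injectivity, I would apply the same approximation technique to a homotopy $S^{j-1} \times [0,1] \to GL_n(B)$ between two maps originating in $GL_n(A_r)$, adjusting the approximation at the two endpoints by short straight-line homotopies so that it restricts there to the original maps. The main obstacle I anticipate is not any individual step but the bookkeeping needed to choose $\varepsilon$ small enough and $s$ large enough so that approximation, invertibility, and endpoint matching all hold simultaneously over the compact parameter space; this is precisely where the interplay between the infimum-norm property and compactness of $S^{j-1}$ (or $S^{j-1}\times [0,1]$) makes the argument go through.
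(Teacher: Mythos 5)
The paper gives no proof of this theorem: it simply asserts it with the remark that it ``may be proved using a variation of the Karoubi density theorem.'' Your approximation-and-lifting argument inside the groups $GL_n$ is precisely that kind of variation, so your overall strategy matches the authors' intent, and exchanging the two direct limits is unproblematic. However, two steps in your surjectivity argument have genuine gaps.

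First, you propose to apply your ``second consequence'' to $\varphi(x)^{-1}\alpha_r(\varphi'(x)) - 1$, but this element lives only in $M_n(B)$: the factor $\varphi(x)^{-1}$ is generally not in the image of $\alpha_r$, so you cannot invoke a statement about elements of $M_n(A_r)$. The standard repair is to also approximate $x \mapsto \varphi(x)^{-1}$ by a continuous map $\psi' \colon S^{j-1} \to M_n(A_r)$ (enlarging $r$ if necessary), and then estimate $\varphi'(x)\psi'(x) - 1$ and $\psi'(x)\varphi'(x) - 1$, both of which do lie in $M_n(\widetilde{A_r})$ and have small image in $M_n(B)$; pushing them forward to a suitable $A_s$ makes $\alpha_{s,r}(\varphi'(x))$ both right- and left-invertible. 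Second, you need a single $s$ that works for all $x \in S^{j-1}$ simultaneously, and you acknowledge this as the ``main obstacle'' without supplying a mechanism. The issue is real because the monotone convergence $\|\alpha_{s,r}(a)\|_{A_s} \downarrow \|\alpha_r(a)\|_B$ is not a priori uniform in $a$. The resolution is Dini's theorem: the functions $a \mapsto \|\alpha_{s,r}(a)\|_{A_s}$ are continuous (in fact $1$-Lipschitz), decrease pointwise to the continuous function $a \mapsto \|\alpha_r(a)\|_B$, and the relevant set $\{\varphi'(x)\psi'(x) - 1 : x \in S^{j-1}\}$ is a compact subset of $M_n(\widetilde{A_r})$, so the convergence is uniform on it, yielding a single $s$. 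The same two repairs are needed in your injectivity step. With them in place, the proof is complete and agrees with the method the paper has in mind.
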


A modest first application is the reduction of Novodvorskii's theorem to the case of (topologically) finitely generated Banach algebras.

\begin{corollary}
\label{cor-reduction-to-f-g}
If the Gelfand transform induces an isomorphism in $K$-theory for every unital and finitely generated commutative Banach algebra, then it induces an   isomorphism in $K$-theory for every separable \textup{(}in the  sense of general topology\textup{)} unital commutative Banach algebra.
\end{corollary}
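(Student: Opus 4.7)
The plan is to realize any separable unital commutative Banach algebra $A$ as a Banach algebra direct limit of unital finitely generated commutative Banach algebras, then invoke the continuity of topological $K$-theory (Theorem~\ref{thm-k-theory-continuity}).

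Choose a countable dense subset $\{a_1, a_2, \ldots\} \subseteq A$ and let $A_r$ denote the closed unital subalgebra of $A$ generated by $a_1, \ldots, a_r$. Each $A_r$ is unital, commutative, and topologically finitely generated. The inclusions $A_r \hookrightarrow A_{r+1}$ are isometric (so of norm at most $1$), and since $\bigcup_r A_r$ is dense in $A$ by construction, the Banach algebra direct limit $\varinjlim A_r$ is naturally isomorphic to $A$.

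Next, set $X_r = \Spec(A_r)$ and $X = \Spec(A)$. The algebra inclusions induce continuous restriction maps $\rho_r \colon X_{r+1} \to X_r$; these are surjective because every character of the closed unital subalgebra $A_r$ extends to a character of $A_{r+1}$ (pick any maximal ideal of $A_{r+1}$ lying over the kernel). A routine verification, using that characters of $A$ are determined by their values on the dense set $\{a_i\}$, shows that the canonical map $X \to \varprojlim_r X_r$ is a homeomorphism of compact Hausdorff spaces. The pullback maps $C(X_r) \to C(X_{r+1})$ and $C(X_r) \to C(X)$ are isometric embeddings of unital Banach algebras, and the union $\bigcup_r C(X_r) \subseteq C(X)$ is a self-adjoint subalgebra containing the constants that separates the points of $X$. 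By the Stone--Weierstrass theorem, this union is dense in $C(X)$, and hence $C(X) = \varinjlim C(X_r)$ in the Banach algebra sense.

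Finally, the Gelfand transforms $\gamma_r \colon A_r \to C(X_r)$ are compatible with the structure maps of both systems, and their Banach algebra direct limit is the Gelfand transform $\gamma \colon A \to C(X)$. By hypothesis each $\gamma_r$ induces an isomorphism in $K$-theory; Theorem~\ref{thm-k-theory-continuity} then gives the same for $\gamma$. The only mildly delicate point is the identification $C(X) = \varinjlim C(X_r)$, which relies on Stone--Weierstrass together with the recognition of $X$ as the inverse limit of the $X_r$; everything else is essentially formal once the direct system of $A_r$'s is in place.
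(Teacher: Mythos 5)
Your strategy coincides with the paper's---realize $A$ as the Banach-algebra direct limit of its finitely generated closed subalgebras $A_r$, identify $\Spec(A) \cong \varprojlim \Spec(A_r)$ and $C(X) \cong \varinjlim C(X_r)$, and invoke Theorem~\ref{thm-k-theory-continuity}---but two auxiliary claims along the way are false. The restriction maps $\rho_r \colon X_{r+1} \to X_r$ need \emph{not} be surjective: a character of a closed unital subalgebra of a commutative Banach algebra does not in general extend. For example, the closed subalgebra of $C(\mathbb{T})$ generated by $z$ is the disc algebra, whose spectrum is the closed disc, yet the restriction map from $\Spec C(\mathbb{T}) = \mathbb{T}$ lands only on the boundary circle; equivalently, the maximal ideal at $0$ in the disc algebra generates all of $C(\mathbb{T})$, since $z$ becomes invertible there, so no maximal ideal lies over it. For the same reason the pullbacks $C(X_r) \to C(X_{r+1})$ and $C(X_r) \to C(X)$ need not be injective (let alone isometric), so describing ``$\bigcup_r C(X_r)\subseteq C(X)$'' as a union of subalgebras is not quite correct as stated.

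Both issues are repairable without changing the architecture. The identification $X\cong\varprojlim X_r$ does not use surjectivity of the $\rho_r$: the canonical map $X \to \varprojlim X_r$ is injective because $\bigcup_r A_r$ is dense, and surjective because a compatible family of characters defines a norm-one character on the dense subalgebra $\bigcup_r A_r$ which extends by continuity to $A$; a continuous bijection of compact Hausdorff spaces is a homeomorphism. For $C(X)\cong\varinjlim C(X_r)$, work with the union of the \emph{images} of the $C(X_r)$ in $C(X)$; this is still a self-adjoint unital subalgebra separating points (two distinct characters of $A$ already differ on some $A_r$), so Stone--Weierstrass gives density, and the compactness fact $\bigcap_{s\ge r}\Image(X_s\to X_r) = \Image\bigl(\varprojlim_s X_s \to X_r\bigr)$ shows that the direct-limit seminorm of $f\in C(X_r)$ equals the sup-norm of its pullback to $X$, so the canonical morphism $\varinjlim C(X_r)\to C(X)$ is an isometric isomorphism. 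With these corrections your argument is the same as the paper's.
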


\begin{proof}
A general unital commutative Banach algebra is the direct limit of its finitely generated subalgebras, and the Gelfand spectrum is the inverse limit of the Gelfand spectra of those subalgebras. Since the canonical morphism
\[
 \varinjlim C(X_\alpha) \longrightarrow  C(\varprojlim X_\alpha) 
\]
is an isomorphism of Banach algebras, the continuity property of $K$-theory  completes the proof.
\end{proof}

The obvious modification of  the  results of this section to handle general directed systems reduces  Novodvorskii's theorem for not-necessarily-separable Banach algebras  to the finitely generated case, too.

Finally, we shall take advantage of the compatibility of the   Mayer-Vietoris property with direct limits.

\begin{lemma}
\label{lem-limits-and-m-v}
Suppose given a sequence of commuting cubes 
\[
\xymatrix@!0@C=30pt{
& A_n \ar[dl] \ar[rr]\ar'[d][dd]
& & B_n \ar[dl]\ar[dd] \\
A_{n+1}  \ar[rr]\ar[dd]
& & B_{n+1}  \ar[dd] \\
& C_n\ar[dl]  \ar'[r][rr] & & D_n\ar[dl]
\\
C_{n+1} \ar[rr]  & & D_{n+1} 
}
\]
with the morphisms from the back to front faces of norm $1$ or less, giving rise to a commuting square
\[
\xymatrix@C=30pt{
\varinjlim A_n \ar[r] \ar[d]  & \varinjlim B_n \ar[d] \\
\varinjlim C_n \ar[r]  \ar[r]& \varinjlim D_n
}
\]
If all the back faces of the cubes have the Mayer-Vietoris property, then so does the direct limit square. 
\qed
\end{lemma}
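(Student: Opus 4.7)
The plan is to verify the Mayer-Vietoris property of the direct limit square by factoring the canonical morphism
\[
\varinjlim A_n \longrightarrow \DMC(\varphi_\infty,\psi_\infty)
\]
through the intermediate Banach algebra $\varinjlim \DMC(\varphi_n,\psi_n)$, where $\varphi_\infty$ and $\psi_\infty$ are the morphisms induced on the direct limits. I would then show that each of the two maps in this factorization is a $K$-theory isomorphism.

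For the first map $\varinjlim A_n \to \varinjlim \DMC(\varphi_n,\psi_n)$, the key point is that it is simply the direct limit of the canonical morphisms $A_n\to \DMC(\varphi_n,\psi_n)$ coming from the back faces of the cubes, each of which by hypothesis is already a $K$-theory isomorphism. Applying the continuity theorem (Theorem~\ref{thm-k-theory-continuity}) at both ends and passing the isomorphisms to the limit gives
\[
K_*(\varinjlim A_n) \;\cong\; \varinjlim K_*(A_n) \;\stackrel\cong\longrightarrow\; \varinjlim K_*(\DMC(\varphi_n,\psi_n)) \;\cong\; K_*(\varinjlim \DMC(\varphi_n,\psi_n)).
\]

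For the second map $\varinjlim \DMC(\varphi_n,\psi_n) \to \DMC(\varphi_\infty,\psi_\infty)$, I expect to show directly that it is an isometric isomorphism of Banach algebras. Isometry is immediate from the definition of the direct limit norm, and density reduces to an approximation argument: given $(b,f,c)\in \DMC(\varphi_\infty,\psi_\infty)$ and $\epsilon>0$, first choose $n$ large enough that $b$ and $c$ admit $\epsilon$-approximations $b_n\in B_n$ and $c_n\in C_n$, and that the continuous map $f\colon [0,1]\to D_\infty$, whose image is compact, admits a uniform $\epsilon$-approximation $\tilde f_n\colon [0,1]\to D_n$ (for example via a partition of unity subordinate to a finite cover of the image). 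To enforce the boundary conditions, I would replace $\tilde f_n(t)$ by
\[
\tilde f_n(t) + (1-t)\bigl(\varphi_n(b_n)-\tilde f_n(0)\bigr) + t\bigl(\psi_n(c_n)-\tilde f_n(1)\bigr),
\]
which remains an $O(\epsilon)$-uniform approximation to $f$ and lies in $\DMC(\varphi_n,\psi_n)$.

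The only real obstacle is this second step, where one must verify that forming the double mapping cone—in particular the $C([0,1],-)$ construction subject to the boundary matching conditions—commutes with Banach algebra direct limits. The linear correction displayed above is the small trick that makes this go through; once it is in hand, composing the two $K$-theory isomorphisms gives the canonical morphism for the limit square and completes the argument.
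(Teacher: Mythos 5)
Your proof is correct, and since the paper leaves the lemma as an exercise (the statement ends with an immediate \(\qed\)), the factorization of the canonical morphism through \(\varinjlim \DMC(\varphi_n,\psi_n)\) is surely the argument the authors had in mind: continuity of \(K\)-theory handles the first leg, and a Banach-algebra identification handles the second. One small point you gloss over: the claim that the map \(\varinjlim \DMC(\varphi_n,\psi_n) \to \DMC(\varphi_\infty,\psi_\infty)\) is isometric is not quite ``immediate from the definition of the direct limit norm,'' because of the \(C([0,1],-)\) factor --- you need to interchange \(\lim_s\) and \(\sup_{t\in[0,1]}\). This interchange does hold, but the reason is Dini's theorem: for a fixed \((b,f,c)\in \DMC(\varphi_n,\psi_n)\), the functions \(t\mapsto \|f(t)\|_{D_{n+s}}\) are continuous, decrease pointwise in \(s\), and converge pointwise to the continuous function \(t\mapsto \|\bar f(t)\|_{D_\infty}\), hence converge uniformly on the compact interval \([0,1]\). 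Worth a sentence, since without it you only know the map is norm-decreasing, which together with dense range would not yet give an isomorphism. The linear correction device for the boundary conditions is exactly right.
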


\section{The Banach Algebra of a Polynomially Convex Compact Set}

\begin{definition}
A compact subset $X $ of $\C^n$ is \emph{polynomially convex} if it includes all points $w\in \C^n$ with the property that
\[ | p(w)| \le \max \{ \, |p(z)| : z \in X\,\} 
\]
for every complex (holomorphic) polynomial  function  $p $ on $\C^n$.
\end{definition}

See for example Hormander's monograph  \cite{Hormander90}.  Every compact convex set is polynomially convex. The polynomially convex compact subsets of $\C$ are   the compact subsets with connected complements, but in higher dimensions there is a much richer set of possibilities.  For instance every compact subset of $\R^n$ is polynomially convex in $\C^n$.

\begin{definition}
Let $X$ be a polynomially convex compact subset of $\C^n$. We shall denote by $B(X)$ the closure in the supremum norm on $X$ of the algebra of complex polynomial functions on $X$.
\end{definition}

Obviously $B(X)$ is a commutative Banach algebra under pointwise multiplication.  Each point   $z\in X$  determines a multiplicative linear functional   $\varepsilon _z \colon B(X)\to \C$ by evaluation at $z$, and it is easily checked that these are the \emph{only} multiplicative linear functionals. Hence:

\begin{lemma}
Let $X$ be a polynomially convex compact subset of $\C^n$.  The map $z\mapsto \varepsilon_z$
is a homeomorphism from $X$ onto the   Gelfand spectrum of $B(X)$.    \qed 
\end{lemma}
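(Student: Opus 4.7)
The plan is to establish in turn: (i) injectivity of $z \mapsto \varepsilon_z$, (ii) continuity, (iii) surjectivity onto the Gelfand spectrum, and (iv) upgrading the continuous bijection to a homeomorphism via a compactness argument.

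First, injectivity is immediate: the coordinate functions $z_1,\ldots,z_n$ lie in $B(X)$ and separate points of $X$, so if $\varepsilon_z = \varepsilon_w$ then $z_i = \varepsilon_z(z_i) = \varepsilon_w(z_i) = w_i$ for each $i$. Continuity is also routine: the Gelfand spectrum of $B(X)$ carries the weak-$*$ topology, so continuity of $z\mapsto \varepsilon_z$ amounts to continuity of $z \mapsto f(z)$ for each $f \in B(X)$, which holds for polynomials $f$ by inspection and extends to all of $B(X)$ by uniform approximation.

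The main step is surjectivity, and this is where polynomial convexity of $X$ does its essential work. Given a multiplicative linear functional $\varepsilon \colon B(X) \to \C$, I would form the point $w = (\varepsilon(z_1),\ldots,\varepsilon(z_n)) \in \C^n$. Multiplicativity and linearity of $\varepsilon$ give $\varepsilon(p) = p(w)$ for every polynomial $p$, while the standard fact that a multiplicative linear functional on a unital Banach algebra has norm at most one yields
\[
|p(w)| = |\varepsilon(p)| \le \|p\|_{B(X)} = \max_{z \in X} |p(z)|
\]
for every complex polynomial $p$ on $\C^n$. The polynomial convexity hypothesis then forces $w \in X$, and the identity $\varepsilon(p) = p(w) = \varepsilon_w(p)$ on polynomials extends to $\varepsilon = \varepsilon_w$ on all of $B(X)$ by density of polynomials and continuity of both functionals.

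Finally, the map $z\mapsto \varepsilon_z$ is a continuous bijection from the compact space $X$ to the Hausdorff Gelfand spectrum (Hausdorffness being built into the weak-$*$ topology), so it is automatically a homeomorphism. I do not anticipate a serious obstacle; the one substantive ingredient is the polynomial convexity assumption, which is packaged precisely so as to make the surjectivity step work.
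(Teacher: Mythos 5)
Your argument is correct and is precisely the one the paper has in mind: the paper states just before the lemma that evaluations are ``easily checked'' to be the only multiplicative linear functionals, and leaves the rest (injectivity, continuity, compact-to-Hausdorff) as routine. Your surjectivity step via $\|p\|_{B(X)} = \max_X|p|$ and polynomial convexity is exactly the point being invoked, so this is the same approach spelled out in full.
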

 
The Gelfand transform may therefore be identified  with the inclusion of $B(X)$ into the algebra $C(X)$ of all continuous complex-valued functions on $X$, and hence the Novodvorskii theorem for $B(X)$ states the following:
 
\begin{theorem}
\label{thm-novodvorskii-for-b-of-x}
Let $X$ be a polynomially convex compact subset of $\C^n$. The inclusion of $B(X)$ into $C(X)$ induces an isomorphism in $K$-theory.
\end{theorem}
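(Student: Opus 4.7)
The plan is to prove Theorem~\ref{thm-novodvorskii-for-b-of-x} by induction on the ambient complex dimension $n$, in the spirit of the Jordan-Brouwer bisection argument sketched in the introduction. The base case $n=0$ is trivial, since $X$ is then empty or a single point and $B(X) = C(X)$.

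For the inductive step, fix a real number $c$ and bisect $X$ by the real hyperplane $\{\Re z_n = c\}$, setting $X_{\pm} = X \cap \{\pm(\Re z_n - c) \ge 0\}$ and $X_0 = X_+ \cap X_- = X \cap \{\Re z_n = c\}$.  A standard argument using polynomial approximations of $\exp(\lambda(c - z_n))$ for $\lambda > 0$ shows that $X_\pm$ and $X_0$ remain polynomially convex, so the Banach algebras $B(X_\pm)$ and $B(X_0)$ are defined.  The central step is to establish, via Theorem~\ref{thm-improved-m-v-thm}, the Mayer-Vietoris property for the commuting square of restriction maps
\begin{equation*}
\xymatrix{
B(X) \ar[r]\ar[d] & B(X_+)\ar[d]\\
B(X_-)\ar[r] & B(X_0).
}
\end{equation*}
The pullback property is an exercise in approximation (a function on $X$ that is uniformly approximable by polynomials on both $X_+$ and $X_-$ is uniformly approximable by polynomials on $X$), and the dense image of the restriction $B(X_+) \to B(X_0)$ is immediate since polynomials restrict to polynomials. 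The additivity hypothesis $B(X_+) + B(X_-) = B(X_0)$ — interpreted as subspaces of $B(X_0)$ via restriction — is the genuine several-complex-variables input: every function in $B(X_0)$ must arise as the restriction of a \emph{difference} of functions in $B(X_\pm)$.  This is a Cousin / Oka-Weil style splitting for the slab geometry, and is where the Oka principle enters.

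Granting the MV square for the $B$-algebras, and noting that the corresponding square for the $C$-algebras satisfies MV trivially since $C(X_\pm) \to C(X_0)$ is surjective by Tietze extension, naturality of the six-term sequence and the five-lemma reduce the theorem for $X$ to the theorem for $X_+$, $X_-$, and $X_0$.  Now iterate the bisection, exactly as in the cube argument:  bisect $X_+$ by $\{\Re z_n = c_1\}$ with $c < c_1$, retain the quarter containing $X_0$, bisect it by $\{\Re z_n = c_2\}$ with $c < c_2 < c_1$, and continue with $c_k \searrow c$.  The nested intersection is $X_0$, and since the limit norm on polynomials coincides with the sup norm on $X_0$, the canonical morphism $\varinjlim B(X_+^{(k)}) \to B(X_0)$ is an isomorphism of Banach algebras.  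By Theorem~\ref{thm-k-theory-continuity} together with Lemma~\ref{lem-limits-and-m-v}, the theorem for $X_+$ (and symmetrically for $X_-$) reduces to the theorem for the single slice $X_0$.  Running the same bisection argument with $\Im z_n$ in place of $\Re z_n$ further reduces to slices $X \cap \{z_n = \zeta\}$, which sit inside the complex hyperplane $\{z_n = \zeta\} \cong \C^{n-1}$ and are polynomially convex there.  The inductive hypothesis on $n$ then completes the proof.

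The main obstacle is the additivity $B(X_+) + B(X_-) = B(X_0)$.  The sum is automatically dense, but producing an exact splitting — rather than an approximate one — requires uniformly bounded control over the decomposition of functions on the interface $X_0$ into pieces extending to $X_+$ and $X_-$.  This is the point where multivariable complex analysis (a Cousin or $\dbar$-style splitting) does real work.  All other ingredients — the preservation of polynomial convexity under half-space intersection, the MV machinery of Section~2, the five lemma, and the Mayer-Vietoris compatibility with direct limits from Section~3 — are formal once this step is in hand.
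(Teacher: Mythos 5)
Your bisection-by-hyperplane plan is the right idea, but the reduction step contains a genuine logical gap. Each bisection, via Mayer--Vietoris and the five lemma, reduces the theorem for $X$ to the theorem for \emph{all three} of $X_+$, $X_-$, and $X_0$; the two slabs $X_\pm$ have the same complexity as $X$, so nothing has yet been gained. You then assert that re-bisecting $X_+$, retaining the quarter touching $X_0$, and passing to the direct limit shows ``the theorem for $X_+$ reduces to the theorem for the single slice $X_0$''. This does not follow. The second bisection of $X_+$ produces $X_{++}$, $X_{+-}$, and the slice $X_{+0}$, and Mayer--Vietoris requires the theorem for all three; you cannot discard $X_{++}$ merely because it is the quarter away from $X_0$. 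Continuity (Theorem~\ref{thm-k-theory-continuity}) does give $\varinjlim K_*(B(X_+^{(k)}))\cong K_*(B(X_0))$ for your nested sequence, but $X_+$ is the \emph{first} member of that sequence, not its limit, so this says nothing at all about $K_*(B(X_+))$; and Lemma~\ref{lem-limits-and-m-v} controls the MV property of a limit square, not the propagation of the theorem backwards from a slice to a slab.

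The paper closes this gap by working with the mapping cone $\DCone(X)$ of the inclusion $B(X)\hookrightarrow C(X)$ and arguing by contradiction that $K_*(\DCone(X))=0$. Combining the Mayer--Vietoris property for the $\DCone$-algebras (Lemma~\ref{lem-cone-abutting-mv-1}) with the inductive hypothesis $K_*(\DCone(X'\cap X''))=0$ yields the \emph{injectivity} $K_*(\DCone(X))\hookrightarrow K_*(\DCone(X'))\oplus K_*(\DCone(X''))$ (Corollary~\ref{cor-cone-abutting-injective-from-mv}). A nonzero class must therefore survive into one of the two halves --- you do not get to choose which --- and iterating this forced choice while the $\alpha$-diameter halves produces a decreasing sequence on all of whose $\DCone$-algebras the class stays nonzero; continuity (Lemma~\ref{lem-continuity-of-cone-k-theory}) then forces a nonzero class on the intersection, which sits in a strictly lower-dimensional real-affine subspace --- a contradiction. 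This is exactly the ``if and only if it holds for all $\cap C_j$'' step in the Jordan--Brouwer template, which your iteration silently weakens to ``for one $\cap C_j$ of your choosing''. (The paper also inducts on the dimension of the smallest real-affine subspace containing $X$, not on the ambient complex dimension $n$, which dispenses with your extra $\Im z_n$ pass.) One secondary caution: establishing $B(X_+)+B(X_-)=B(X_0)$ directly is not routine --- the paper explicitly sidesteps it via the $A(V,Y)$-algebras of Section~5 --- but you can and should simply invoke Theorem~\ref{thm-m-v-for-polynomially-convex-sets}; the fatal problem is the reduction scheme, not the MV square.
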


We shall prove this result in the next two sections.  Actually the full Novodvorskii theorem follows easily from Theorem~\ref{thm-novodvorskii-for-b-of-x}: 

\begin{theorem}
\label{thm-reduction-to-b-of-x}
If  the Gelfand transform for $B(X)$ induces an isomorphism in $K$-theory for every compact and polynomially compact subset  $X\subseteq \C^n$ and every $n$, then the Gelfand transform induces an isomorphism in $K$-theory for every commutative unital Banach algebra.
\end{theorem}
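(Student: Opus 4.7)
The plan is to factor the Gelfand transform of an arbitrary commutative unital Banach algebra through $B(X)$ for a suitable polynomially convex compact $X\subseteq \C^n$, using Theorem~\ref{thm-gelfand-karoubi} to handle one half of the factorization and the hypothesis of the present theorem to handle the other. First I would apply Corollary~\ref{cor-reduction-to-f-g}, together with the remark immediately following it that extends the argument to non-separable algebras by passing to arbitrary directed limits, to reduce to a commutative unital Banach algebra $A$ that is topologically generated by finitely many elements $a_1,\dots,a_n$.

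Next I would embed the Gelfand spectrum of $A$ into $\C^n$ by sending a character $\chi$ to $(\chi(a_1),\dots,\chi(a_n))$, and call the image $X$. This image is a compact subset of $\C^n$, and it is polynomially convex: if $w\in \C^n$ satisfies $|p(w)|\le \sup_{z\in X}|p(z)|$ for every polynomial $p$, then since the right-hand side equals the spectral radius in $A$ of $p(a_1,\dots,a_n)$ and hence is bounded by $\|p(a_1,\dots,a_n)\|_A$, the rule $p\mapsto p(w)$ extends by continuity to a character of $A$ whose coordinates at the generators recover $w$. The Gelfand transform of a polynomial in the $a_i$ is the corresponding polynomial function on $X$, so by continuity the Gelfand transform of $A$ takes values in the uniform closure $B(X)$ of the polynomials, yielding a factorization
\[
A \stackrel{\alpha}{\longrightarrow} B(X) \stackrel{\iota}{\longrightarrow} C(X)
\]
of the Gelfand transform for $A$.

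To conclude, $\alpha$ has dense image (the polynomials lie in its image and are dense in $B(X)$ by definition) and induces a homeomorphism on Gelfand spectra (both are canonically identified with $X$), so Theorem~\ref{thm-gelfand-karoubi} applies and $\alpha_{*}$ is an isomorphism on $K$-theory; the hypothesis of the theorem is exactly that $\iota_{*}$ is an isomorphism. Composition gives the result. I do not anticipate any real obstacle beyond the verification of polynomial convexity of $X$ and the factorization through $B(X)$, both of which come down to the fact that the spectral radius seminorm on $A$ agrees with the supremum norm on $X$ of the Gelfand transform; the rest is bookkeeping around the already established Corollary~\ref{cor-reduction-to-f-g} and Theorem~\ref{thm-gelfand-karoubi}.
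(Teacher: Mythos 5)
Your proof is correct and follows essentially the same route as the paper: reduce to the finitely generated case via Corollary~\ref{cor-reduction-to-f-g}, identify $\Spec(A)$ with a polynomially convex compact set $X\subseteq\C^n$ via the generators, and use Theorem~\ref{thm-gelfand-karoubi} on the dense factorization $A\to B(X)$ together with the hypothesis for $B(X)\to C(X)$. You simply spell out the polynomial convexity of $X$ in more detail than the paper, which asserts it without the explicit character-extension argument.
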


\begin{proof}
If $A$ is  finitely generated by $a_1,\dots, a_n$, so that the polynomials in these elements are dense in $A$, then the morphism $\varphi\mapsto (\varphi(a_1),\dots, \varphi(a_n))$ is a homeomorphism from the Gelfand spectrum of $A$ onto a polynomially convex  compact subset $X$ of $\C^n$.  

The Gelfand transform maps $A$ to a dense subalgebra of $B(X)$, and it follows from Theorem~\ref{thm-gelfand-karoubi} that the induced map on $K$-theory is an isomorphism.  So assuming that the Gelfand transform induces an isomorphism in $K$-theory for $B(X)$, it does so for $A$ as well.  The result now follows from Theorem~\ref{cor-reduction-to-f-g}.
\end{proof}

\section{The Mayer-Vietoris Property for Polynomially Convex Compact Sets} 

The purpose of this section is to prove the following result:

\begin{theorem}
\label{thm-m-v-for-polynomially-convex-sets}
Let $X\subseteq \C^n$ be a polynomially convex compact set. Let $\alpha$ be an $\R$-linear functional on $\C^n$ and   let $c \in \R$. If 
\[
X' = \{ \, z \in X : \alpha(z) \le c \, \} 
\quad \text{and} \quad 
X'' = \{ \, z\in X : \alpha(z) \ge c \, \},
\]
then the commuting square  \begin{equation*}
\xymatrix{
B(X) \ar[r] \ar[d]  & B(X') \ar[d] \\
B(X'') \ar[r]  \ar[r]& B(X' \cap X'')
}
\end{equation*} 
of restriction morphisms has the Mayer-Vietoris property.
\end{theorem}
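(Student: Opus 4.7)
The plan is to verify the three hypotheses of Theorem~\ref{thm-improved-m-v-thm} for the given square of restriction morphisms. Let $\lambda\colon \C^n\to\C$ be the unique $\C$-linear functional with $\alpha=\Re\lambda$, and set $\mu(z)=\lambda(z)-c$, so that $X'=X\cap\{\Re\mu\le 0\}$, $X''=X\cap\{\Re\mu\ge 0\}$, and $X'\cap X''=X\cap\{\Re\mu=0\}$. Since $\mu$ is a polynomial, $e^{t\mu}$ is the uniform limit of its Taylor polynomials on any compact set for every $t\in\C$; in particular $e^{t\mu}|_X\in B(X)$.

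First I would show that $X'$, $X''$, and $X'\cap X''$ are polynomially convex. If $w\notin X'$, then either $w\notin X$ (use polynomial convexity of $X$) or $w\in X$ with $\Re\mu(w)>0$; in the latter case $|e^{t\mu}|\le 1$ on $X'$ whereas $|e^{t\mu(w)}|=e^{t\Re\mu(w)}>1$ for each $t>0$, and uniform approximation of $e^{t\mu}$ by polynomials on the compact set $X'\cup\{w\}$ yields the separation. Analogous arguments handle $X''$ and $X'\cap X''$. It follows that polynomials are dense in $B(X'\cap X'')$; since every polynomial defines an element of $B(X')$ (resp.\ $B(X'')$) that restricts to the given polynomial on $X'\cap X''$, both restriction morphisms $B(X')\to B(X'\cap X'')$ and $B(X'')\to B(X'\cap X'')$ have dense image, securing the dense-image hypothesis of Theorem~\ref{thm-improved-m-v-thm}.

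The core analytic task is then to verify the pullback property and the sum-of-images condition $\varphi[B(X')]+\psi[B(X'')]=B(X'\cap X'')$. Both follow from the assertion that every $g\in B(X'\cap X'')$ can be written as $g=g'|_{X'\cap X''}+g''|_{X'\cap X''}$ with $g'\in B(X')$ and $g''\in B(X'')$, depending boundedly on $g$. I would construct such a splitting via a Cauchy-type integral in the variable $\mu$: the kernel $1/(\zeta-\mu(z))$ integrated against a contour in $\C$ that splits a neighborhood of $\mu(X'\cap X'')\subseteq i\R$ into the two closed half-planes produces two functions, one holomorphic on a neighborhood of $X'$ and one on a neighborhood of $X''$, which by Oka--Weil lie in $B(X')$ and $B(X'')$ respectively. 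The main obstacle is obtaining the requisite uniform bounds on this splitting operator and then arguing that the decomposition extends continuously from polynomials to all of $B(X'\cap X'')$. With the splitting in hand, Theorem~\ref{thm-improved-m-v-thm} yields the Mayer-Vietoris property.
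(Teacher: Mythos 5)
Your plan to verify the three hypotheses of Theorem~\ref{thm-improved-m-v-thm} directly for the square of $B$-algebras is a natural first instinct, and your polynomial-convexity argument (via $e^{t\mu}$) and the density observation are both fine. But the proposal breaks down at exactly the points the paper works hardest to handle, and I do not think it can be repaired along the lines you sketch.

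First, you assert that \emph{both} the pullback property and the sum-of-images condition follow from the existence of a bounded splitting $g = g'|_{X'\cap X''} + g''|_{X'\cap X''}$. These are two genuinely independent statements. The splitting addresses only the surjectivity of $B(X')\oplus B(X'')\to B(X'\cap X'')$; it says nothing about whether every compatible pair $(f',f'')$ is realized by a single element of $B(X)$. Indeed the paper never even attempts to verify the pullback property for the $B$-algebra square. Instead it works with the quotient algebras $A(V,Y) = B(V)/I(V,Y)$ and defines $A(Z_m',Z_m'')$ to \emph{be} the pullback, so that Proposition~\ref{prop-application-of-inverse-function-m-v} applies; the substance is then in Lemma~\ref{lem-inverse-to-restriction-map}, which (via Lemma~\ref{lem-due-to-allan}, a coherence-type statement) compares this pullback algebra with the restriction algebra $A(Z_{m+1})$ in the direct limit. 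This is where the real analytic content is, and your proposal has no counterpart to it.

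Second, the splitting itself. The set $X'\cap X''$ lies in the real hyperplane $\{\alpha = c\}$, so it is thin. A Cauchy-type integral in the scalar variable $\mu$ with a contour hugging $i\R$ is essentially the Cauchy transform, whose jump behavior is governed by Plemelj's formula; to produce $g'\in B(X')$ and $g''\in B(X'')$ with $\|g'\|,\|g''\|\le C\|g\|_{X'\cap X''}$ you would need something like boundedness of the Hilbert transform on $C(X'\cap X'')$, which fails. (Take $X$ the closed unit disk and $\alpha = \Re$; then $X'\cap X''=[-i,i]$ and $B(X'\cap X'') = C([-i,i])$, but the Cauchy transform is not bounded there.) The paper sidesteps this by working with \emph{fattened} sets $W_m', W_m''$ whose intersection is a slab of positive thickness, on which a $\bar\partial$-solution (Proposition~\ref{prop-uniform-theorem-b-polydisc}) gives bounded splitting, and then passing to a direct limit. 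Moreover, even after fattening, the paper proves the splitting only for \emph{polydisks} and explicitly remarks that the analogous statement for the sets $X_m$ or $Z_m$ is not easy; the reduction to polydisks (the embedding $\mu(z) = (z,p_1(z),\dots,p_k(z))$ into $\C^{n+k}$) and the quotient-algebra comparison exist precisely to handle this. In short: the hard part of the theorem is exactly the step you describe as ``the main obstacle,'' and the Cauchy-kernel sketch does not overcome it.
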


We shall prove the theorem through a sequence of reductions and approximations, most of them involving the following concept:

\begin{definition}
\label{def-compact-polynomial-polyhedron}
A compact set $X\subseteq \C^n$ is a \emph{polynomial polyhedron} if there are complex polynomial functions $p_1,
\dots, p_k$ on $\C^n$ such that 
\[
X=\{ \, z\in \C^n : |p_1(z)|,  \dots, | p_k(z)| \le 1 \,\}.
\]
\end{definition}

\begin{lem}[See   for example {\cite[Lemma~2.7.4]{Hormander90}}]
\label{lem-localpolyhedra1}
Let $X \subseteq \bC^n$ be a polynomially convex compact set and  let $U\subseteq \C^n$ be an open set containing $X$. There is a compact polynomial polyhedron $L $ such that $X \subseteq L \subseteq U$. \qed
\end{lem}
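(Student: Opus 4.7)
The plan is to separate $X$ from the complement of $U$ using polynomial inequalities supplied by the defining property of polynomial convexity, and then to extract a finite collection of such inequalities by compactness. First I would handle the possibility that $U$ is unbounded. Choose $R > 0$ with $\max_{z \in X} \max_i |z_i| < R$, and let $P = \{ z \in \C^n : |z_i| \le R \text{ for all } i\}$. The polynomials $p_i(z) = z_i / R$ already cut out $P$ as a polynomial polyhedron, so any refinement of $P$ obtained by adjoining further polynomial inequalities $|q(z)|\le 1$ is automatically compact. The set $K = P \setminus U$ is compact and disjoint from $X$.

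For each $w \in K$, polynomial convexity of $X$ produces a complex polynomial $q$ on $\C^n$ with $\max_{z\in X}|q(z)| < |q(w)|$; after rescaling I may assume $\max_{z\in X}|q(z)| < 1 < |q(w)|$. By continuity the strict inequality $|q| > 1$ persists on an open neighborhood $V_w$ of $w$. By compactness of $K$, finitely many such neighborhoods $V_{w_1},\dots,V_{w_m}$, with corresponding polynomials $q_1,\dots,q_m$, cover $K$. I would then define
\[
L = \bigl\{\, z \in \C^n : |p_i(z)| \le 1 \text{ for } 1 \le i \le n,\ |q_j(z)| \le 1 \text{ for } 1 \le j \le m\,\bigr\}.
\]
Then $L$ is a polynomial polyhedron in the sense of Definition~\ref{def-compact-polynomial-polyhedron}; it contains $X$ because every defining inequality is in fact strict at each point of $X$; and it is contained in $U$ because any $z\in L$ lies in $P$ and cannot lie in $K$ without belonging to some $V_{w_j}$ and thereby violating $|q_j(z)| \le 1$.

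There is no serious obstacle; the argument is the standard combination of the defining property of polynomial convexity, which supplies separation pointwise, with a compactness extraction. The only small point to watch is that one must include the coordinate inequalities $|p_i(z)|\le 1$ in the definition of $L$, since the set cut out by the separating polynomials $q_j$ alone need not be bounded and would therefore fail to be a compact polynomial polyhedron in the sense of the definition.
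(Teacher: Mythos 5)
Your argument is correct and is precisely the standard proof found in the reference the paper cites (H\"ormander, Lemma~2.7.4); the paper itself omits the proof and simply points to that source. You correctly identify the two necessary ingredients: the pointwise separation provided by polynomial convexity together with a compactness extraction, and the coordinate inequalities $|z_i/R|\le 1$ that are needed to guarantee $L$ is bounded.
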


\begin{corollary} Every polynomially convex compact set is the intersection of a nested sequence of compact polynomial polyhedra. \qed
\end{corollary}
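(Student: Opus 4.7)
The plan is to iterate Lemma~\ref{lem-localpolyhedra1} for a shrinking sequence of open neighborhoods of $X$, using the previously constructed polyhedron to keep the sequence nested.

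First I would fix a countable neighborhood basis of $X$: for example, the open sets
\[
U_k = \bigl \{ \, z \in \C^n : \operatorname{dist}(z,X) < 1/k\,\bigr\},
\]
which satisfy $U_{k+1}\subseteq U_k$ and $\bigcap_k U_k = X$ because $X$ is closed. By Lemma~\ref{lem-localpolyhedra1} applied to $U_1$, there is a compact polynomial polyhedron $L_1$ with $X \subseteq L_1 \subseteq U_1$.

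Next I would construct $L_{k+1}$ from $L_k$ inductively. The key observation is that $X$ lies in the \emph{interior} of $L_k$: indeed, the polynomials $p_1,\dots, p_r$ defining $L_k$ are bounded in modulus by some constant strictly less than $1$ on the compact set $X$ (otherwise some $p_j$ would attain modulus $1$ on $X$; but the construction in Lemma~\ref{lem-localpolyhedra1} always produces $p_j$'s that separate $X$ from the boundary of $U_k$, and a trivial rescaling $p_j \mapsto (1-\varepsilon) p_j$ lets us assume $|p_j| < 1$ on $X$ without changing the containment $L_k\subseteq U_k$). Hence $V_{k+1} = U_{k+1} \cap \operatorname{int}(L_k)$ is an open neighborhood of $X$, and Lemma~\ref{lem-localpolyhedra1} applied to $V_{k+1}$ produces a compact polynomial polyhedron $L_{k+1}$ with $X \subseteq L_{k+1} \subseteq V_{k+1} \subseteq L_k$.

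Finally, from $X \subseteq L_k \subseteq U_k$ for all $k$ and $\bigcap_k U_k = X$, I conclude that $\bigcap_k L_k = X$, giving the desired nested sequence. The only mildly delicate point is ensuring $X$ lies in the interior of each $L_k$ so that the next step of the induction has room to operate; this is handled by the rescaling remark above (or equivalently by re-reading Lemma~\ref{lem-localpolyhedra1} as asserting that $X$ is contained in the open polynomial polyhedron $\{|p_j|<1\}$, which is the interpretation in Hörmander).
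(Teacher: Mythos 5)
Your overall approach is right and is the one the paper has in mind: iterate Lemma~\ref{lem-localpolyhedra1} over a shrinking neighborhood basis of $X$. But the rescaling remark in your parenthetical is backwards. Replacing $p_j$ by $(1-\varepsilon)p_j$ changes the polyhedron from $\{|p_j|\le 1\}$ to $\{|p_j|\le 1/(1-\varepsilon)\}$, which is a \emph{larger} set, so the containment $L_k\subseteq U_k$ is not automatically preserved; to save that version one would have to re-apply the lemma to a slightly smaller open set first and then argue by compactness that the enlarged polyhedron still fits inside $U_k$. Fortunately, as you observe in the last clause, this detour is unnecessary: in the standard construction behind Lemma~\ref{lem-localpolyhedra1} (Hörmander \cite[Lemma~2.7.4]{Hormander90}) one uses polynomial convexity to produce polynomials with $|p_j|<1$ strictly on $X$ (and $X$ sits strictly inside the bounding polydisc), so $X$ lies in the interior of $L_k$ automatically and the induction closes.

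One can also sidestep the interior question entirely. Apply the lemma to each $U_k$ independently to obtain polyhedra $L_k$ with $X\subseteq L_k\subseteq U_k$, and then replace $L_k$ by $\tilde L_k := L_1\cap\cdots\cap L_k$. A finite intersection of compact polynomial polyhedra is again a compact polynomial polyhedron (just take the union of the defining families of polynomials), the sequence $\tilde L_k$ is nested by construction, and $\bigcap_k \tilde L_k \subseteq \bigcap_k U_k = X$ while $X\subseteq \tilde L_k$ for every $k$. This removes the only delicate point from your argument.
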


It follows from the corollary and from Lemma~\ref{lem-limits-and-m-v} that the special case of Theorem~\ref{thm-m-v-for-polynomially-convex-sets} in which $X$ is a compact polynomial polyhedron  implies the general case. So \emph{from now on we shall assume that $X$ is a compact polynomial polyhedron, as in Definition~\ref{def-compact-polynomial-polyhedron}.}

Choose a positive number $R$   so that 
\[
(z_1,\dots, z_n) \in X \quad \Rightarrow \quad  |z_1|, \dots , |z_n| \le R
\]
and form the compact polydisk
\[
W = \{\, (z,w)\in \C^{n+k} :  |z_1|, \dots , |z_n| \le R\,\,\,\text{and} \,\,\, |w_1|,\dots, |w_k |\le 1\,\} .
\]
Let 
\[
 Z = \{\, (z,w)\in W :  p_1(z) = w_1   , \dots, p_k(z) = w_k\, \}  
\]
and define a polynomial map 
$\mu\colon \C^n \to \C^{n+k}$ by
\begin{equation}
\label{eq-oka-map}
\mu(z) = (z, p_1(z),\dots, p_k(z) ) .
\end{equation}
Observe that  $\mu$ restricts to a homeomorphism from $ X$ to $Z$. The inverse of this homeomorphism  is given by the coordinate projection from $\C^{n+k}$ to $\C^n$. Since $\mu$ and this inverse are both polynomial maps, it is evident that composition with $\mu$ gives an isomorphism of Banach algebras
\[
\mu^* \colon  B(Z)\stackrel\cong \longrightarrow B(X).
\]
As we shall see, the advantage of doing so is that $Z$  is defined, as a subset of the polydisk $W$, by  polynomial equations, namely the equations  $q_j (z,w) =0$, where 
\begin{equation}
    \label{eq-q-polynomials}
q_j (z, w ) = p_j(z) - w_j \qquad (j=1,\dots ,k).
\end{equation}

We shall need  sets $W_m$, $X_m$ and $Z_m$ ($m=1,2,\dots$), that approximate   $W$, $X$ and $Z$  from the outside. For $m \ge  1$   we define
\begin{multline*}
W_{m} =  \{ \,  (z,w)\in \C^{n+k} : 
 |z_1|, \dots , |z_n| \le R{+}{1}/{m} \\
 \,\,\text{and} \,\, |w_1|,\dots, |w_k |\le 1{+}{1}/{m}
 \,  \} 
\end{multline*}
and then define 
\[
Z_{m} = \{\, (z,w) \in W_{m} : q_1(z,w) = \cdots = q_k(z,w) =0 \,\}
\]
and 
\[
X_{m} = \{ \, z\in \C^n : \mu(z) \in Z_{m} \,\} .
\]
Each $X_{m+1}$ lies in the interior of $X_{m}$, and $X$ is the intersection of all $X_{m}$. Similarly for $W_{m}$ and $Z_{m}$.
We   also define 
\[
X_{m}' = \{ z\in X_m : \alpha(z) \le  c{+}{1}/{m}\,\}
\]
and
\[
X_{m}'' = \{ z\in X_m : \alpha(z) \ge  c{-} {1}/{m}\,\} ,
\]
and similarly for $W_{m}'$, $W_{m}''$,  $Z_{m}'$ and $Z_{m}''$, using the same bounds on $\alpha(z)$.  Once more we obtain nested decreasing sequences of compact sets, with each set contained in the interior of its predecessor.

The following result is   relevant to the Mayer-Vietoris property in view of Theorem~\ref{thm-improved-m-v-thm}.

\begin{proposition}\label{prop-uniform-theorem-b-polydisc}
For every $m$,   
\begin{multline*}
\operatorname{Image} \bigl [B(W_{m}'){\to} B(W_{m}'{\cap} W_{m}'')\bigr ] 
\\ +  \operatorname{Image}\bigl [  B(W_{m}''){\to} B(W_{m}'{\cap }W_{m}'')\bigr ] =  B(W_{m}' {\cap} W_{m}''),
\end{multline*}
where the morphisms are restrictions.
\end{proposition}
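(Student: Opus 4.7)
The plan is to produce a bounded linear splitting
\[
f \longmapsto (f_1,f_2) \in B(W_m')\oplus B(W_m''), \qquad f_1|_{W_m'\cap W_m''}+f_2|_{W_m'\cap W_m''}=f,
\]
for $f\in B(W_m'\cap W_m'')$, by combining a smooth cutoff along the slab $\{c-1/m\le\alpha\le c+1/m\}$ with a sup-norm bounded $\bar\partial$-solver on the polydisk $W_m$. Since polynomials are dense in $B(W_m'\cap W_m'')$ by definition, and the splitting will be continuous, it suffices to carry out the construction for polynomial $f$.

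Write $\alpha=\Re\tilde\alpha$ for a $\C$-linear functional $\tilde\alpha$ on $\C^{n+k}$, and choose $\chi\in C^\infty(\R,[0,1])$ with $\chi\equiv 0$ on $(-\infty,c-1/m]$ and $\chi\equiv 1$ on $[c+1/m,\infty)$. Set $g:=\chi\circ\alpha$, so that $\bar\partial g=\tfrac12\chi'(\alpha)\,\overline{d\tilde\alpha}$ is a smooth $(0,1)$-form supported in the closed slab, which lies inside $W_m'\cap W_m''$. For a polynomial $f$, the form $\omega:=f\,\bar\partial g$ is smooth, $\bar\partial$-closed, supported in the slab, and obeys the key bound
\[
\|\omega\|_\infty \le C_g\,\|f\|_{\infty,W_m'\cap W_m''},
\]
precisely because $\omega$ sees $f$ only on the slab.

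Using the iterated Cauchy--Pompeiu integral on the polydisk $W_m$, solve $\bar\partial u=\omega$ to obtain $u\in C(W_m)$, smooth and holomorphic off the support of $\omega$ on $W_m^\circ$, with $\|u\|_{\infty,W_m}\le C\,\|\omega\|_\infty$. Define
\[
f_1:=(gf-u)\big|_{W_m'},\qquad f_2:=\bigl((1-g)f+u\bigr)\big|_{W_m''}.
\]
The function $gf-u$ is continuous on $W_m$ and satisfies $\bar\partial(gf-u)=\omega-\omega=0$ on $W_m^\circ$, so it lies in the polydisk algebra $B(W_m)$ (by the iterated Cauchy representation); restricting gives $f_1\in B(W_m')$, and similarly $f_2\in B(W_m'')$. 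On the slab we have $f_1+f_2 = gf+(1-g)f=f$. The combined bound $\|f_1\|_\infty+\|f_2\|_\infty\le C'\|f\|_{\infty,W_m'\cap W_m''}$ then allows us to extend the splitting, by density, from polynomials to all of $B(W_m'\cap W_m'')$, yielding the conclusion of the proposition.

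The main technical ingredient (and the only real obstacle) is the sup-norm $\bar\partial$-solver on the polydisk with bound $\|u\|_\infty\lesssim\|\omega\|_\infty$, a classical fact that follows from iterating the one-variable Cauchy--Pompeiu formula across the complex coordinates of $W_m$; once this is in hand, the rest of the argument is a verification.
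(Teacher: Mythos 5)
Your plan is the same as the paper's: a smooth cutoff $g=\chi\circ\alpha$, a $\bar\partial$-correction $u$, the decomposition $f_1 = gf-u$, $f_2=(1-g)f+u$, and a bounded linear splitting extended from a dense subalgebra (you use polynomials; the paper uses functions holomorphic near $W_m'\cap W_m''$ — either works). The one place that needs repair is the $\bar\partial$-solver, which you treat as a black box. You assert that a sup-norm-bounded solution of $\bar\partial u=\omega$ on the polydisk is ``a classical fact that follows from iterating the one-variable Cauchy--Pompeiu formula across the complex coordinates.'' That is not so for a general $\bar\partial$-closed $(0,1)$-form: after solving $\partial u_1/\partial\bar z_1=\omega_1$ by a Cauchy integral, the residual form $\omega-\bar\partial u_1$ has coefficients $\omega_j - \partial u_1/\partial\bar z_j$, and $\partial u_1/\partial\bar z_j$ involves a derivative of $\omega_1$ under the integral sign; its sup-norm is not controlled by $\|\omega\|_\infty$, so the iteration loses the $L^\infty$ estimate. (Sup-norm estimates for $\bar\partial$ on polydisks do hold, but they are a genuinely nontrivial matter, not a naive iteration.)

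Fortunately you do not need a general solver, and you have already set up the observation that rescues the argument: $\omega=f\,\bar\partial g$ has the single component $\tfrac12 f\,\chi'(\alpha)\,\overline{d\tilde\alpha}$. Choose linear coordinates with $v_1=\tilde\alpha$; then the coefficient $\tfrac12 f\chi'(\Re v_1)$ is holomorphic in $v_2,\dots,v_{n+k}$, so a \emph{single} Cauchy--Pompeiu integral in the $v_1$-variable produces $u$ with $\partial u/\partial\bar v_1=\omega_1$ and, by differentiating under the integral sign, $\partial u/\partial\bar v_j=0$ for $j\ge 2$; the uniform integrability of $(v-v_1)^{-1}$ on a disk gives the sup-norm bound $\|u\|_\infty\lesssim\|\omega\|_\infty$ directly. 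This is exactly what the paper does. Once you replace the appeal to a general polydisk $\bar\partial$-solver by this one-variable integration, the rest of your argument — the support estimate $\|\omega\|_{\infty,W_m}\le C_g\|f\|_{\infty,W_m'\cap W_m''}$, membership of $gf-u$ in the polydisk algebra, and the density extension — goes through and matches the paper's proof.
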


In the proof we shall use the fact that    $B(W_{m}'{\cap }W_{m}'')$ includes a dense family of functions that extend   holomorphically to a neighborhood of $W_{m}'{\cap }W_{m}''$   (namely the polynomial functions, for instance), and that every function that is holomorphic in a neighborhood of $W_{m}'$ or $W_{m}'' $ restricts to a function in $B(W_{m}')$ or $B(W_{m}'')$, respectively.  The latter is a general fact about polynomially compact convex sets (the Oka-Weil theorem) but it may be proved directly in the present simple case.

\begin{proof}
It suffices to prove the following approximation result. There is a constant $C>0$ such that if $f$ is holomorphic in a neighborhood of $W_m'{\cap}W_m''$,   then there are functions $f' \in B(W_m')$ and $f''\in B(W_m'')$ such that 
\begin{equation}
    \label{eq-cocycle-relation}
  f \vert_{W_m' \cap W_m''} =
f'\vert_{W_m' \cap W_m''} + f'' \vert_{W_m' \cap W_m''}
\end{equation}
and 
\begin{equation}
\label{eq-norm-estimate-for-f-and-g}
\|f'\|_{W_m'}, \|f''\|_{W_m''} \leq C \|f\|_{W_m' \cap W_m''} .
\end{equation}

Let $\sigma: \R \to \R $ be a smooth function such that $\varphi\equiv  0$ in a neighborhood of $(-\infty , c{-}1/m]$  and $\varphi\equiv 1$  in a neighborhood of $[c{+}1/m,\infty)$.  Then define $\varphi\colon \C^{n+k}\to \R $ by $\varphi (z) = \sigma(\alpha(z))$.

Choose linear coordinates $v_1,\dots, v_{n+k}$ on $\C^{n+k}$ with $\alpha(z) = \Re(v_1)$, and given $f$ as   above, let  $V$ be a compact polydisk that includes $W_m$ within its interior, that  is small enough that $f$ is defined in a neighborhood of $V'{\cap}V'' = V \cap \{ \,c{-}1/m {\le } \alpha(z) {\le } c{+}1/m \,\} $, and that is small enough that 
\begin{equation}
    \label{eq-estimate-on-v-from-w}
\Bigl \|\frac {\partial \varphi}{\partial \bar v_1} \cdot f \Bigr \|_{V} \leq 2\Bigl \| \frac {\partial \varphi}{\partial \bar v_1}\cdot  f\Bigr \|_{W_m}  .
\end{equation}
Note that even though $f$ is only defined near $V'{\cap }V''$, its product with $\partial \varphi/\partial \bar v_1$ extends by zero to a smooth function in a neighborhood of $V$. Next, let $D$ be the projection of $V$ onto the first complex coordinate (it is a compact disk in $\C$).  Then define  
\[ 
g (v_1,\dots ,v_{n+k})= \frac{1}{\pi }\int_{D}  \Bigl ( \frac{\partial \varphi}{\partial \bar v_1}\cdot f \Bigr ) (v,v_2,\dots,v_{n+k})   (v - v_1)^{-1} d\lambda(v),
\]
where the integration is with respect to Lebesgue measure on the disk.
The function $g$ is defined and smooth on the interior of $V$, it is holomorphic there in the variables $v_2,\dots , v_{n+k}$, and 
\[
 \frac{\partial g}{\partial \bar v_1}
 =
 \frac{\partial \varphi}{\partial \bar v_1}\cdot f .
 \]
 (see for example \cite[Theorem 1.2.2]{Hormander90}). Moreover since $(v-v_1)^{-1}$ is uniformly integrable as a function of $v\in D$ as $v_1$ ranges over $D$, 
\begin{equation*}
\| g\|_{W_m}    
\le \text{constant}_1\cdot 
    \Bigl \|\frac {\partial \varphi}{\partial \bar v_1} \cdot f \Bigr \|_{V}
    \le \text{constant}_2 \cdot 
     \Bigl \|\frac {\partial \varphi}{\partial \bar v_1} \Bigr \|_{V}\cdot \| f\|_{W_m'\cap W_m''},
\end{equation*}
 where the second inequality uses \eqref{eq-estimate-on-v-from-w} and both constants are independent of $f$.
 
Now set 
\[
f' = \varphi f  - g
\quad \text{and} \quad 
f'' = (1 {-}\varphi )f + g
\]
These are holomorphic in   neighborhoods of $W_m'$ and $W_m''$ respectively (here the functions $\varphi f$ and $(1{-}\varphi) f$ are extended by zero to $W_m'$ and $W_m''$, respectively) and they satisfy \eqref{eq-cocycle-relation} and \eqref{eq-norm-estimate-for-f-and-g}, so the proof is complete.
 \end{proof}

Unfortunately it is not easy to prove the counterpart of Proposition~\ref{prop-uniform-theorem-b-polydisc} for the spaces $X_m$ or $Z_m$.  Instead we shall replace the Banach algebras $B(X_m)$, etc, with others that are easier to handle.

\begin{definition}
Let  $V$ be a polynomially convex compact subset of $\C^{n+k}$, and let $Y$ be the subset of $V$ on which all the polynomials $q_1,\dots q_k$ vanish. We shall write 
\[
I(V,Y) = \{\, f\in B(V) : f \vert _Y = 0\,\}
\]
and  
\[
A(V,Y) 
=  B(V) / I(V,Y).
\]

\end{definition}

The ideal $I(V,Y)$ in $B(V)$  is the kernel of the obvious restriction morphism 
$
\res \colon B(V) {\to} B(Y)
$.

\begin{lemma}
\label{lem-gelfand-karoubi-a-versus-b1}
The associated    morphism of Banach algebras 
\begin{equation*}
    \label{eq-restriction-map-from-w-to-z}
\overline \res \colon A(V,Y) \longrightarrow B(Y)
\end{equation*} 
induces an isomorphism in $K$-theory.
\end{lemma}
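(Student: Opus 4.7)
The natural plan is to apply Theorem~\ref{thm-gelfand-karoubi} to $\overline{\res}$, which is a morphism of commutative unital Banach algebras by construction. The two things to verify are that $\overline{\res}$ has dense image and that it induces a homeomorphism on Gelfand spectra; as a preliminary, I must also confirm that $Y$ is polynomially convex, so that $B(Y)$ is defined in the first place.

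For the preliminary, if $w \in \C^{n+k}$ satisfies $|p(w)| \le \max_{z \in Y}|p(z)|$ for every complex polynomial $p$, then specializing to $p = q_j$ yields $q_j(w) = 0$ for each $j$, and the trivial inequality $\max_Y|p| \le \max_V|p|$ together with the polynomial convexity of $V$ places $w$ in $V$; hence $w \in Y$. Density of the image of $\overline{\res}$ is then immediate: $B(Y)$ is by definition the closure of the polynomials restricted to $Y$, and each such polynomial factors through restriction from $B(V)$, so the image of $B(V) \to B(Y)$---and therefore the image of $\overline{\res}$---is dense.

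For the Gelfand spectra, $\Spec A(V,Y)$ identifies canonically with the subset of $\Spec B(V) = V$ consisting of points at which every element of $I(V,Y)$ vanishes. Every $v \in Y$ has this property tautologically. Conversely, if $v \in V \setminus Y$ then some $q_j$ has $q_j(v) \ne 0$ while $q_j \in I(V,Y)$ (since $q_j|_Y = 0$), so $v$ is excluded. Thus $\Spec A(V,Y) = Y = \Spec B(Y)$ under compatible identifications, and the induced map on spectra is the identity homeomorphism. Theorem~\ref{thm-gelfand-karoubi} then delivers the isomorphism in $K$-theory.

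There is no substantial obstacle in this argument; the only minor subtlety is the polynomial convexity of $Y$, which reflects the general fact that intersecting a polynomially convex compact set with the common zero locus of finitely many polynomials preserves polynomial convexity. All the genuine work (in particular the Karoubi density theorem underlying Theorem~\ref{thm-gelfand-karoubi}) has already been done.
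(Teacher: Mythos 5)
Your proof is correct and follows essentially the same route as the paper's: identify the Gelfand spectrum of $A(V,Y)$ with $Y$ by noting that evaluations at points of $V\setminus Y$ fail to kill $I(V,Y)$ (some $q_j$ being a witness), observe that $\overline{\res}$ has dense range via polynomials, and invoke Theorem~\ref{thm-gelfand-karoubi}. Your added verification that $Y$ is itself polynomially convex, so that $B(Y)$ sits in the framework of the preceding section, is a detail the paper leaves implicit and is worth making explicit.
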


\begin{proof} 
The Gelfand spectrum of $A(V,Y)$ is mapped to the Gelfand spectrum of $B(V)$  by composition with the quotient morphism 
\[
B(V) \longrightarrow A(V,Y). 
\]
This map of spectra is injective and continuous, and so it is a homeomorphism  onto its image.  

Now the image of the spectrum of $A(V,Y)$ in the spectrum of $B(V)$ is precisely the set of multiplicative linear functionals on $B(V)$ that vanish on $I(V,Y)$.
The evaluation functionals $\varepsilon_y\colon B(V){\to} \C$ associated to points $y\in Y$ obviously vanish on $I(V,Y)$.  On the other hand, the evaluations   at points $v \in V\setminus  Y$  \emph{do not} vanish on $I(V,Y)$, since for every such,  at least one of the polynomials $q_j$  vanishes on $Y $
 but not on $v$. It follows that the image in $V$ of the Gelfand spectrum of $A(V,Y)$ is $Y$. 
 
 The morphism $\overline \res \colon A(V,Y)\to B(Y)$ is  therefore spectrum-pre\-serv\-ing. 
 Since  it also has dense range (consider polynomials),   Theorem~\ref{thm-gelfand-karoubi} applies. 
\end{proof}

This leads us to analyze the commuting the square
\begin{equation}
\label{eq-a-z-commuting-square}
\xymatrix{
A(W_m,Z_m) \ar[r] \ar[d]  & A(W_m',Z_m') \ar[d] \\
A(W_m'',Z_m'') \ar[r]  \ar[r]& A(W_m'{\cap}W_m'',Z_m' \cap Z_m'') ,
}
\end{equation}
Since the notation used above is a bit cumbersome, we shall simplify it and write the square as 
\begin{equation}
\label{eq-a-z-commuting-square2}
\xymatrix{
A(Z_m) \ar[r] \ar[d]  & A(Z_m') \ar[d] \\
A(Z_m'') \ar[r]  \ar[r]& A(Z_m' \cap Z_m'') ,
}
\end{equation}
 which should not cause any confusion.

 \begin{lemma}
 \label{lem-surjectivity-for-a-algebras}
 \begin{multline*}
\operatorname{Image} \bigl [A(Z_m'){\to} A(Z_m'{\cap} Z_m'')\bigr ] 
\\ +  \operatorname{Image}\bigl [  A(Z_m''){\to} A(Z_m'{\cap }Z_m'')\bigr ] =  A(Z_m' {\cap} Z_m''),
\end{multline*}
 \end{lemma}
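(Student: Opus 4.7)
The plan is to lift the surjectivity from the polydisk case (Proposition~\ref{prop-uniform-theorem-b-polydisc}) through the quotient morphisms defining the $A$-algebras. The key observation is that each restriction morphism $B(W_m^?)\to B(W_m^?)$ used in the polydisk proposition carries the vanishing ideal on the $Z$-subset into the vanishing ideal on the corresponding intersected $Z$-subset, so it descends to the quotient. For example, the restriction $B(W_m')\to B(W_m'\cap W_m'')$ sends $I(W_m',Z_m')$ into $I(W_m'\cap W_m'', Z_m'\cap Z_m'')$, since a polynomial vanishing on $Z_m'$ certainly vanishes on the subset $Z_m'\cap Z_m''$ after restricting its domain to $W_m'\cap W_m''$. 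Consequently we obtain Banach algebra morphisms $A(Z_m')\to A(Z_m'\cap Z_m'')$ and $A(Z_m'')\to A(Z_m'\cap Z_m'')$, and the square \eqref{eq-a-z-commuting-square2} makes sense.

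Granted that, the proof is a direct chase. First, I would pick an arbitrary $\bar f \in A(Z_m'\cap Z_m'')$ and choose a lift $f \in B(W_m'\cap W_m'')$. Next, I would invoke Proposition~\ref{prop-uniform-theorem-b-polydisc} to decompose
\[
f \;=\; g_1\bigr|_{W_m'\cap W_m''} + g_2\bigr|_{W_m'\cap W_m''}
\]
for some $g_1\in B(W_m')$ and $g_2\in B(W_m'')$. Finally, I would push $g_1$ and $g_2$ down to their classes $\bar g_1 \in A(Z_m')$ and $\bar g_2 \in A(Z_m'')$; by the compatibility of restriction with the quotient maps just noted, their images in $A(Z_m'\cap Z_m'')$ sum to $\bar f$.

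I do not expect any genuine obstacle in this argument: the real content is Proposition~\ref{prop-uniform-theorem-b-polydisc}, and the present lemma just records that this content survives when one quotients out the ideals defining the varieties $Z_m$, $Z_m'$, $Z_m''$, $Z_m'\cap Z_m''$. The only minor point to verify carefully is the elementary statement about the restriction of the vanishing ideals, which amounts to the set inclusions $Z_m'\cap Z_m''\subseteq Z_m'$ and $Z_m'\cap Z_m''\subseteq Z_m''$.
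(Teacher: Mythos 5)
Your argument is exactly the paper's proof, spelled out in more detail: the paper dismisses the lemma as an immediate consequence of Proposition~\ref{prop-uniform-theorem-b-polydisc} because the $A$-algebras are quotients of the $B$-algebras, and your lift-decompose-project chase is precisely the content of that remark. Correct and same approach.
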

 
 \begin{proof}
 This is an immediate consequence of Proposition~\ref{prop-uniform-theorem-b-polydisc} since the  $A$-algebras are quotients of their $B$-algebra counterparts.
 \end{proof}

Now form the Banach alegbra 
\[
A(Z_m', Z_m'') = \bigl \{ \, 
(f',f'')\in A(Z_m') \times A(Z_m'') : 
f'\vert_{Z_m'\cap Z_m''} = f''_{Z_m'\cap Z_m''} 
\,\bigr \}.
\]

\begin{proposition}
\label{prop-application-of-inverse-function-m-v}
The commuting square 
\begin{equation*}
\xymatrix{
A(Z_{m}', Z_{m}'') \ar[r] \ar[d]  & A(Z_{m}') \ar[d] \\
A(Z_{m}'') \ar[r]  \ar[r]& A(Z_{m}' \cap Z_{m}'')
}
\end{equation*}  has the Mayer-Vietoris property.  
\end{proposition}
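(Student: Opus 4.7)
The plan is to verify the three hypotheses of Theorem~\ref{thm-improved-m-v-thm} for the given square and then apply it directly.

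First, the pullback property is automatic: by definition,
\[
A(Z_m',Z_m'') = \bigl\{\,(f',f'')\in A(Z_m')\oplus A(Z_m'') : f'\vert_{Z_m'\cap Z_m''} = f''\vert_{Z_m'\cap Z_m''}\,\bigr\},
\]
so the canonical morphism from $A(Z_m',Z_m'')$ into the pullback of $A(Z_m')$ and $A(Z_m'')$ over $A(Z_m'\cap Z_m'')$ is the identity.

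Second, the surjectivity condition
\[
\operatorname{Image}\bigl[A(Z_m')\to A(Z_m'\cap Z_m'')\bigr] + \operatorname{Image}\bigl[A(Z_m'')\to A(Z_m'\cap Z_m'')\bigr] = A(Z_m'\cap Z_m'')
\]
is exactly the content of Lemma~\ref{lem-surjectivity-for-a-algebras}.

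Third, I would verify that at least one of the restriction morphisms has dense image. By definition, $B(V)$ is the closure of the algebra of polynomial functions on $V$ for any polynomially convex compact set $V$, and since $A(V,Y) = B(V)/I(V,Y)$ is a quotient of $B(V)$, the images of polynomial functions form a dense subalgebra of $A(V,Y)$. Restriction of polynomials from $W_m'$ (resp.\ $W_m''$) to $W_m'\cap W_m''$ produces polynomial functions on $W_m'\cap W_m''$, whose images in $A(Z_m'\cap Z_m'')$ are dense. Hence both restriction maps $A(Z_m')\to A(Z_m'\cap Z_m'')$ and $A(Z_m'')\to A(Z_m'\cap Z_m'')$ have dense image.

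With all three hypotheses in hand, Theorem~\ref{thm-improved-m-v-thm} applies and yields the Mayer-Vietoris property for the square. No step here is truly an obstacle; the substantive work has already been done in Proposition~\ref{prop-uniform-theorem-b-polydisc} (whose proof uses the $\bar\partial$-solving integral) and in Theorem~\ref{thm-improved-m-v-thm} (whose proof uses the Graves implicit function theorem). The present proposition is essentially a bookkeeping step that combines these two ingredients once one observes that the $A$-algebras inherit both the pullback structure (by construction) and the surjectivity condition (from their $B$-algebra counterparts).
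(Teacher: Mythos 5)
Your proposal is correct and follows the same route as the paper: identify the square as a pullback, invoke Lemma~\ref{lem-surjectivity-for-a-algebras} for the surjectivity hypothesis, and apply Theorem~\ref{thm-improved-m-v-thm}. You are in fact slightly more careful than the paper's two-sentence proof, which leaves the dense-image hypothesis of Theorem~\ref{thm-improved-m-v-thm} implicit; your observation that polynomials are dense in each $A$-algebra and restrict to polynomials closes that small gap explicitly.
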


\begin{proof} By construction, 
this 
is  a pullback square.  So the result   follows from 
Lemma~\ref{lem-surjectivity-for-a-algebras} and Theorem~\ref{thm-improved-m-v-thm}.
\end{proof}

To move from this to a proof of Theorem~\ref{thm-m-v-for-polynomially-convex-sets}, we shall need to address the difference between $A(Z_m',Z_m'')$ and $A(Z_m)$, and for this purpose we shall use the following fact. 
The proof will be evident to those familiar with coherent sheaves; we shall give  an alternative   proof from   \cite[Lemma 2]{Allan69}  in an appendix.

\begin{lemma}
\label{lem-due-to-allan}
Let $Y$ be the interior of a compact polynomial polyhedron in $\C^n$ and let 
\[
V = \bigl \{ \, (z,w)\in Y\times\C^k : |w_1|,\dots, |w_k| \le 1\, \bigl \}
\]
Let $p_1,\dots, p_k$ be polynomial functions on $\C^n$, and define polynomial functions $q_1,\dots, q_k$ on $\C^{n+k}$ by $q_j(z,w)= p_j (z) - w_j$. If $f$ is a holomorphic  function defined in a neighborhood of $V$, and if $f $ vanishes on the common zero set of $q_1,\dots, q_k$ in that neighborhood, then there are holomorphic  functions $h_1, \ldots, h_k$ defined near  $V$ such that
\[\pushQED{\qed} 
f =  q_ 1 h_1 + \cdots + q_k h_k.\qedhere
\popQED
\]
\end{lemma}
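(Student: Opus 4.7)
My plan is to prove the lemma by induction on $k$, using Cauchy's integral formula and a partial-fraction trick to divide out one generator $q_j = p_j(z) - w_j$ at a time. Off the zero locus of $q_j$ the natural candidate is $h_j = f/q_j$, and the only issue is extending it holomorphically across the zero locus. The workhorse is the partial-fraction identity
\[
\frac{1}{(\zeta - w)(\zeta - p(z))} = \frac{1}{p(z) - w}\left[\frac{1}{\zeta - p(z)} - \frac{1}{\zeta - w}\right],
\]
which lets me express $f/(p(z) - w)$ as a Cauchy-type contour integral in an auxiliary variable $\zeta$, with an integrand that is jointly holomorphic in all the original parameters.

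For the base case $k = 1$, I would cover $Y$ by open sets $N$ on each of which $|p_1|$ is bounded and $f$ extends holomorphically to $N \times \{|\zeta| \le R'\}$ for some $R'$ exceeding both $\sup_N |p_1|$ and $1$. For such a chart and for $|w_1| \le 1$, set
\[
h_1(z, w_1) = -\frac{1}{2\pi i}\oint_{|\zeta|=R'}\frac{f(z, \zeta)}{(\zeta - w_1)(\zeta - p_1(z))}\,d\zeta.
\]
A residue calculation gives $(p_1(z) - w_1)\, h_1(z, w_1) = f(z, w_1) - f(z, p_1(z))$ (or just $f(z, w_1)$ if $p_1(z)$ lies outside the contour), and the hypothesis gives $f(z, p_1(z)) = 0$ whenever $(z, p_1(z))$ lies in the domain of $f$, so $q_1 h_1 = f$ locally; uniqueness of $h_1 = f/q_1$ off the zero locus glues the local formulas into a global $h_1$. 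For the inductive step, assume the result at level $k{-}1$ and apply the identity
\[
\frac{1}{\zeta - w_k} = \frac{1}{\zeta - p_k(z)} + \frac{w_k - p_k(z)}{(\zeta - w_k)(\zeta - p_k(z))}
\]
inside Cauchy's formula for $f$ in the variable $w_k$ to split $f = r(z, w_1, \dots, w_{k-1}) - q_k h_k$, with $r$ and $h_k$ given as explicit contour integrals. When the contour encloses $p_k(z)$ one has $r = f(z, w_1, \dots, w_{k-1}, p_k(z))$, which vanishes on $\{w_j = p_j(z) : j < k\}$ by the hypothesis on $f$; when it does not, $r$ is identically zero. Either way, $r$ is holomorphic on a neighborhood of $V_{k-1} = Y \times \{|w_1|, \dots, |w_{k-1}| \le 1\}$ and vanishes on the common zero set of $q_1, \dots, q_{k-1}$ there, so the inductive hypothesis yields $r = \sum_{j<k} q_j h_j$ and, after a sign change in $h_k$, the desired decomposition of $f$.

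The main technical obstacle will be showing that the local definitions of $r$ (and $h_k$) patch into single holomorphic functions on a neighborhood of $V$: the contour radius $R'$ must be chosen locally on $Y$ (depending on where $|p_k|$ is bounded and where $f$ extends), and different charts may fall in different regimes relative to the size of $p_k(z)$. I would resolve this by working on a locally finite open cover of $Y$, choosing the contour radius in each chart so that the circle $|\zeta| = R'$ avoids $|p_k(z)|$ on that chart, and invoking Cauchy's theorem to check that any two admissible contour choices give the same integral on overlaps. A secondary detail is that the hypothesis on $f$ must be applied to the slightly enlarged common zero set sitting inside the open domain $U \supset V$ on which $f$ is holomorphic, rather than merely inside $V$ itself; this is exactly the range of $p_k(z)$ for which the Cauchy integral producing $r$ picks up a nontrivial residue, so the two ingredients dovetail.
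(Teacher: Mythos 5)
Your plan mirrors the paper's at the top level---induct on $k$, strip off one $q_j$ per step---and your base case $k=1$ works: whether $p_1(z)$ lies inside or outside the contour, the residue calculation gives $h_1=f/q_1$ off the zero locus, which is the unique possible quotient, so the local formulas agree on overlaps automatically, and $h_1$ extends across $\{q_1=0\}$ precisely because of the vanishing hypothesis. The inductive step, however, has a genuine gap in the gluing of $r$. Unlike $h_1$, the splitting $f = r - q_k h_k$ is not unique---one may trade $(r,h_k)$ for $(r + q_k g,\, h_k + g)$---so the requirement that both reproduce $f$ does not force local choices of $r$ to agree, and they in fact do not: a contour that encloses $p_k(z)$ gives $r(z,w') = f(z,w',p_k(z))$, one that does not gives $r(z,w') = 0$, and these differ for generic $z$ with $|p_k(z)| > 1$. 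Cauchy's theorem only identifies integrals along homotopic contours in the domain of holomorphy of the integrand; when the pole at $\zeta = p_k(z)$ lies between two admissible radii the integrals differ by that residue. Nor can you stay in a single regime globally: for $|p_k(z)|\le 1$ the pole is unavoidably enclosed, while for $|p_k(z)|$ exceeding the radius to which $f$ extends in the $w_k$-direction you are forced to exclude it. Consequently $r$ cannot be made holomorphic on a neighborhood of all of $Y\times\{|w'|\le 1\}$; the natural candidate $f(z,w',p_k(z))$ is simply not defined for $z\in Y$ with $|p_k(z)|$ large, and your contour integrals give no coherent substitute.

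The paper handles exactly this point differently: it applies the inductive hypothesis only over $Y' = \{z\in Y : |p_k(z)|\le 1\}$, where $f(z,w',p_k(z))$ does make sense, and then invokes Oka's extension theorem (stated in the Appendix) to lift the resulting factors $h_j'$ from a neighborhood of $V'$ to functions $h_j$ holomorphic near the full $V$ satisfying $h_j(z,w',p_k(z)) = h_j'(z,w')$. The difference $f - \sum_{j<k} q_j h_j$ then vanishes on $\{q_k=0\}$ near $V$---the relevant portion of that zero set only involves $z$ with $|p_k(z)|$ not much larger than $1$, exactly where the Oka identity applies---and the $k=1$ case finishes. That extension theorem is the essential several-complex-variables input your contour-integral approach is missing; without it, or a comparable Cousin-type device for patching the local $r$'s, the induction does not close.
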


There is a natural  morphism
from $ A(Z_m)$ to $A(Z_m', Z_m'')$, induced from   restriction  of functions on $W_m$ to $W_m'$ and $W_m''$. The following proposition supplies a near-inverse. 

\begin{lemma}
\label{lem-inverse-to-restriction-map}
For each $m$  there is a Banach algebra morphism 
\[
\varphi_m: A(Z_m', Z_m'') \longrightarrow A(Z_{m+1})
\]
for which the  diagram  
\begin{equation*}
\xymatrix{
A(Z_{m}', Z_{m}'')  \ar[d]\ar[dr]^{\varphi_m}   & A(Z_{m})\ar[d]\ar[l] \\
A(Z_{m+1}', Z_{m+1}'')  & A(Z_{m+1}) \ar[l]
}
\end{equation*}
is commutative \textup{(}the unlabeled morphisms are all induced from restriction of functions\textup{)}.
\end{lemma}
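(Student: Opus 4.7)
The plan is to construct $H \in B(W_{m+1})$ representing $\varphi_m(f', f'') \in A(Z_{m+1})$ by patching lifts of $f'$ and $f''$ with a smooth cutoff and correcting the resulting non-holomorphicity through a $\dbar$-equation whose solution vanishes on $Z_{m+1}$. First I will lift $(f', f'')$ to $F' \in B(W_m')$ and $F'' \in B(W_m'')$ satisfying $(F' - F'')|_{Z_m' \cap Z_m''} \equiv 0$ pointwise. Taking the cutoff $\varphi = \sigma \circ \alpha$ from the proof of Proposition~\ref{prop-uniform-theorem-b-polydisc}, with transition region inside the smaller slab $\{c - 1/(m+1) \leq \alpha(z) \leq c + 1/(m+1)\}$, form the continuous function
\[
H_0 \;=\; (1{-}\varphi)\,F' \;+\; \varphi\, F''
\]
on $W_{m+1}$, extending each term by zero where the corresponding lift is not defined. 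Then $H_0|_{Z_{m+1}}$ is the correct common extension of $f'$ and $f''$ to $Z_{m+1}$, but $H_0$ itself is only continuous and typically lies outside $B(W_{m+1})$.

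The second step is to correct $H_0$ so that it becomes holomorphic in the interior of $W_{m+1}$. Because $W_m'$ and $W_m''$ are polynomially convex and polynomials are dense in the respective $B$-algebras, $F'$ and $F''$ are holomorphic in the interiors of $W_m'$ and $W_m''$; hence $\dbar H_0 = (F''{-}F')\,\dbar\varphi$ on the interior of $W_{m+1}$, a smooth $(0,1)$-form supported in the slab. Applying Allan's lemma (Lemma~\ref{lem-due-to-allan}) to the holomorphic function $F' - F''$, which vanishes on $Z_{m+1}' \cap Z_{m+1}''$, furnishes a decomposition $F' - F'' = \sum_j q_j h_j$ with $h_j$ holomorphic in a neighborhood of $W_{m+1}' \cap W_{m+1}''$. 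Then one solves the individual $\dbar$-problems $\dbar u_j = -h_j\,\dbar\varphi$ on the interior of $W_{m+1}$ by a one-variable Cauchy integral in the $v_1$ coordinate (exactly as in the proof of Proposition~\ref{prop-uniform-theorem-b-polydisc}) and sets $u = \sum_j q_j u_j$. This $u$ is smooth on the interior, continuous up to the boundary by uniform integrability of the Cauchy kernel, satisfies $\dbar u = \dbar H_0$, and vanishes on $Z_{m+1}$ since $q_j \equiv 0$ there. Consequently $H := H_0 - u$ is continuous on $W_{m+1}$ and holomorphic on its interior, hence belongs to $B(W_{m+1})$ by polynomial convexity of $W_{m+1}$, and $H|_{Z_{m+1}} = H_0|_{Z_{m+1}}$ exhibits the image of $(f', f'')$.

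The main obstacle is the honest application of Allan's lemma on $W_{m+1}' \cap W_{m+1}''$, since this slab is not literally of the product form $Y \times \{|w| \leq 1\}$ required by the hypothesis of Lemma~\ref{lem-due-to-allan}; I expect to handle this either by enlarging to a polynomial polyhedron of the required shape containing $X_{m+1}$ and applying the lemma there, or by a direct coherent-sheaf argument using that the slab's interior is Stein. Once the decomposition $F' - F'' = \sum_j q_j h_j$ is in hand, the remaining verifications are routine: independence of $[H] \in A(Z_{m+1})$ from the chosen lifts (alternative lifts differ by elements of $I(W_m', Z_m')$ or $I(W_m'', Z_m'')$, whose contributions propagate into $u$ and lie in $I(W_{m+1}, Z_{m+1})$), linearity and multiplicativity of $\varphi_m$ (which reduce to the trivial case $F' = F''$ on polynomial representatives, where $u = 0$ and $H = F'|_{W_{m+1}}$), and commutativity of the stated diagram (immediate in the polynomial case and then extended by continuity).
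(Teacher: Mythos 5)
Your proposal is correct and uses the same key ingredients as the paper: Allan's lemma (Lemma~\ref{lem-due-to-allan}), applied on a polynomial polyhedron fitted between $W_{m+1}'\cap W_{m+1}''$ and $W_m'\cap W_m''$, to write $F'-F''=\sum_j q_jh_j$, followed by a cutoff-and-$\dbar$ correction in the $v_1$ variable. The only difference is organizational: the paper invokes Proposition~\ref{prop-uniform-theorem-b-polydisc} as a black box to split each $h_j=h_j'-h_j''$ and then patches $g'=f'-\sum_j q_jh_j'$ with $g''=f''-\sum_j q_jh_j''$, whereas you inline that proposition's $\dbar$-argument and package the correction as $H=H_0-\sum_j q_ju_j$; unrolling the paper's formulas shows the two constructions produce the same element of $B(W_{m+1})$, and (as the paper notes) well-definedness, the homomorphism property, and commutativity of the diagram all follow painlessly from the injectivity of $A(Z_{m+1})\to B(Z_{m+1})$, with continuity of $\varphi_m$ via the closed graph theorem.
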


\begin{proof}
Let $([f'], [f''])$ be an element of $ A(Z_m', Z_m'')$, where the square brackets denote equivalence classes of functions on $W_m'$ and $W_m''$.  It follows from the definitions that  the difference $f'{ -} f''$, which is defined on $W_m'{\cap}W_m''$, vanishes on the mutual zero-set  of the polynomials $q_1,\dots, q_k$. 

Let us apply Lemma~\ref{lem-due-to-allan}. By fitting a suitable compact polynomial polyhedron into the interior of  $W_{m}''{ \cap} W_{m}''$ whose interior, in turn,  includes $W_{m+1}''{ \cap} W_{m+1}''$, we find that   there are holomorphic functions $h_1,\dots, h_k$ defined on a neighborhood of  $W_{m+1}''{ \cap} W_{m+1}''$ such that
\[
f' - f'' =  q_1 h_1+\cdots + q_k h_k
\]
on this neighborhood.  In particular, the identity holds on $W_{m+1}''{ \cap} W_{m+1}''$.

By applying Proposition \ref{prop-uniform-theorem-b-polydisc}, to a polydisk that is slightly larger than $W_{m+1}$, we find that for each $j$ there are functions $h_j'\in B(W_{m+1}')$ and $h_j''\in B(W_{m+1}'')$, holomorphic in  neighborhoods of $W_{m+1}'$  and  $W_{m+1}''$, respectively, such that 
\[
h_j = h_j' - h_j''
\]
in a neighborhood of $ W_{m+1}'{ \cap} W_{m+1}''$.

Now define functions $g'\in B(W_{m+1}')$ and $g''\in B(W_{m+1}'')$ by
\[
\textstyle 
 g' =  f' -(  q_1 h_1'+\cdots +q_kh_k' )
 \quad \text{and} \quad 
 g'' = f'' - (q_1h_1''+\cdots + q_k h_k'') .
\]
These  are actually defined and holomorphic in neighborhoods of $W_{m+1}'$ and $W_{m+1}''$, respectively, and they agree on a neighborhood of the intersection.  So they determine a function $g$ that is holomorphic in a neighborhood of $W_{m+1}$, and hence a function  $g$ in $B(W_{m+1})$. 

The associated element $[g]\in  A(Z_{m+1})$ is characterized by the fact that   $g$ is equal to $f'$ on    $Z_{m+1}'$, and  is equal to the  $f''$ on $Z_{m+1}''$.   This is because the morphism from $A(Z_{m+1})$ into $B(Z_{m+1})$ is injective. So $[g]$
depends only on $([f'], [f''])$.  

To complete the proof, the formula
\[
\varphi_m \colon ([f'], [f''])
\longmapsto [g] 
\]
defines  an algebra homomorphism from $A(Z_{m}',Z_m'')$ to $A(Z_{m+1})$ that fits into the commuting diagram in the statement of the proposition.    It follows from the closed graph theorem that $\varphi_m$ is in addition continuous, and hence is a Banach algebra morphism, as required. 
\end{proof}

\begin{proof}[Proof of Theorem~\ref{thm-m-v-for-polynomially-convex-sets}]
Form the commuting cube
\[
\xymatrix@!0@C=55pt@R=28pt{
& \varinjlim A(Z_m) \ar[dl]_-{\rho}  \ar[rr]\ar'[d][dd]
& & \varinjlim A(Z_m') \ar@{=}[dl]\ar[dd] \\
\varinjlim A(Z_{m}',Z_m'')  \ar[rr]\ar[dd]
& & \varinjlim A(Z_{m}') \ar[dd] \\
& \varinjlim A(Z_m'')\ar@{=}[dl]  \ar'[r][rr] & & \varinjlim A(Z_m'{\cap} Z_m'')\ar@{=}[dl]
\\
\varinjlim A(Z_{m}'') \ar[rr]  & & \varinjlim A(Z_{m}'{\cap} Z_{m}'') 
}
\]
with all morphisms identities or induced from restrictions.  It follows from Theorem~\ref{thm-k-theory-continuity} and  Lemma~\ref{lem-inverse-to-restriction-map} that the morphism labelled $\rho$ induces an isomorphism in $K$-theory.   In addition, it follows from Lemma~\ref{lem-limits-and-m-v}  and  Proposition~\ref{prop-application-of-inverse-function-m-v} that the front face has the Mayer-Vietoris property. Therefore it follows from Lemma~\ref{lem-mv-cube-back-to-front} that the back face does too.

Now consider a second commuting cube:
\[
\xymatrix@!0@C=55pt@R=28pt{
& \varinjlim A(Z_m) \ar[dl]  \ar[rr]\ar'[d][dd]
& & \varinjlim A(Z_m') \ar[dl]\ar[dd] \\
\varinjlim B(Z_{m})  \ar[rr]\ar[dd]
& & \varinjlim B(Z_{m}') \ar[dd] \\
& \varinjlim A(Z_m'')\ar[dl]  \ar'[r][rr] & & \varinjlim A(Z_m'{\cap} Z_m'')\ar[dl]
\\
\varinjlim B(Z_{m}'') \ar[rr]  & & \varinjlim B(Z_{m}'{\cap} Z_{m}'') 
}
\]
It follows from Theorem~\ref{thm-k-theory-continuity}  and Lemma~\ref{lem-gelfand-karoubi-a-versus-b1} that the morphisms from the back face to the front induce isomorphisms in $K$-theory.  So the front face has the Mayer-Vietoris property.  But the front face is isomorphic to   the commuting square in Theorem~\ref{thm-m-v-for-polynomially-convex-sets}.
\end{proof}

\section{Proof of the Novodvorskii  Theorem for \texorpdfstring{$B(X)$}{B(X)}}
\label{sec-mayer-vietoris-to-main-theorem}
In this section we shall prove Theorem~\ref{thm-novodvorskii-for-b-of-x},  that if $X$ is a compact and polynomially convex subset of $\C^n$, then the inclusion of  $B(X)$ into $ C(X)$ induces an isomorphism in $K$-theory.  As we already observed, this leads to a complete proof of the Novodvorskii theorem.

\begin{definition}
If $X$ is a polynomially convex compact subset of $\C^n$, then we shall denote by 
  $\DCone(X)$ the mapping cone from Definition~\ref{def-mapping-cone} for the inclusion of $B(X)$ into $C(X)$. 
\end{definition}

We shall prove Theorem~\ref{thm-novodvorskii-for-b-of-x} by showing that  $K_*(\DCone(X))=0$.

\begin{lemma}\label{lem-cone-abutting-mv-1}
Let $X\subseteq \C^n$ be a polynomially convex compact set. Let $\alpha$ be an $\R$-linear functional on $\C^n$ and   let $c \in \R$. If 
\[
X' = \{ \, z \in X : \alpha(z) \le c \, \} 
\quad \text{and} \quad 
X'' = \{ \, z\in X : \alpha(z) \ge c \, \},
\]
then the commuting square  \begin{equation*}
\xymatrix{
D(X) \ar[r] \ar[d]  & D(X') \ar[d] \\
D(X'') \ar[r]  \ar[r]& D(X' \cap X'')
}
\end{equation*} 
of restriction morphisms has the Mayer-Vietoris property.
\end{lemma}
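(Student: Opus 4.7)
The plan is to obtain the Mayer-Vietoris property for the mapping cone square as a direct consequence of Lemma~\ref{lem-mv-cube-mapping-cone}, applied to the cube whose back face is the $B$-algebra square of Theorem~\ref{thm-m-v-for-polynomially-convex-sets} and whose front face is the analogous $C$-algebra square
\begin{equation*}
\xymatrix{
C(X) \ar[r] \ar[d]  & C(X') \ar[d] \\
C(X'') \ar[r] & C(X' \cap X''),
}
\end{equation*}
with vertical morphisms between the faces given by the inclusions $B(\,\cdot\,)\hookrightarrow C(\,\cdot\,)$. By definition the mapping cones for these vertical inclusions are precisely $D(X)$, $D(X')$, $D(X'')$ and $D(X'\cap X'')$, so the resulting cone square is the one in the statement.

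Before invoking Lemma~\ref{lem-mv-cube-mapping-cone}, I must verify its hypothesis that both the back and front faces have the Mayer-Vietoris property. The back face is covered by Theorem~\ref{thm-m-v-for-polynomially-convex-sets}. For the front face, $X = X'\cup X''$ expresses $X$ as a union of two closed subsets, and so by the universal property of closed unions the square is a pullback of $C^*$-algebras. Moreover, the Tietze extension theorem implies that the restriction morphisms $C(X')\to C(X'\cap X'')$ and $C(X'')\to C(X'\cap X'')$ are surjective, so in particular their combined image is all of $C(X'\cap X'')$. Theorem~\ref{thm-improved-m-v-thm} therefore applies to the $C$-algebra square and yields the Mayer-Vietoris property for it.

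With both faces verified, Lemma~\ref{lem-mv-cube-mapping-cone} immediately gives the Mayer-Vietoris property for the mapping-cone square of $D(X)$'s, as claimed. No step here should be a real obstacle: the only point needing any care is checking that the $C$-algebra square genuinely satisfies the pullback plus surjectivity hypotheses of Theorem~\ref{thm-improved-m-v-thm}, and both facts are standard for restrictions along a closed cover of a compact Hausdorff space.
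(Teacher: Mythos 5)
Your proposal is correct and follows the same route as the paper: form the cube with the $B$-algebra square (Theorem~\ref{thm-m-v-for-polynomially-convex-sets}) at the back, the $C$-algebra square at the front, observe both faces have the Mayer--Vietoris property, and apply Lemma~\ref{lem-mv-cube-mapping-cone} to the resulting mapping-cone square. The paper's proof simply asserts the $C$-algebra case without argument; your verification of it via the pullback property for a closed cover, Tietze extension for surjectivity, and Theorem~\ref{thm-improved-m-v-thm} is the natural filling-in of that step.
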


\begin{proof}
The same diagram with either $B(X)$-type algebras or $C(X)$-type algebras has the Mayer-Vietoris property.  So the lemma follows immediately from Lemma~\ref{lem-mv-cube-mapping-cone}.
\end{proof}

\begin{corollary}\label{cor-cone-abutting-injective-from-mv}
Let $X=X'\cup X''$, as above.  If $K_*(\DCone(X' {\cap} X''))=0$, then the morphism
\[
K_*(\DCone (X)) \longrightarrow K_*(\DCone (X'))\oplus K_* (\DCone (X''))
\]
induced from restriction is injective. \qed 
\end{corollary}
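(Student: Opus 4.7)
The plan is to apply Lemma~\ref{lem-cone-abutting-mv-1} to produce a six-term Mayer--Vietoris exact sequence of the form \eqref{eq-basic-m-v-sequence} with $A = D(X)$, $B = D(X')$, $C = D(X'')$, and $D = D(X' \cap X'')$. This yields an exact sequence
\[
\xymatrix@C=14pt{
K_0(\DCone(X)) \ar[r] & K_0(\DCone(X')) {\oplus} K_0(\DCone(X'')) \ar[r] & K_0(\DCone(X'{\cap}X'')) \ar[d]\\
\ar[u] K_1(\DCone(X'{\cap}X'')) &\ar[l] K_1(\DCone(X')) {\oplus} K_1(\DCone(X'')) & \ar[l] K_1(\DCone(X)).
}
\]

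Next I would invoke the hypothesis that $K_*(\DCone(X'\cap X''))=0$, which kills the right-hand corner terms in both degrees. Exactness of the sequence at $K_0(\DCone(X))$ and $K_1(\DCone(X))$ then forces the preceding connecting maps (from $K_*(\DCone(X'\cap X''))$) to be zero, and so the restriction map $K_*(\DCone(X)) \to K_*(\DCone(X')) \oplus K_*(\DCone(X''))$ has trivial kernel in each degree, i.e.\ is injective.

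There is no substantive obstacle here, since Lemma~\ref{lem-cone-abutting-mv-1} already packages the Mayer--Vietoris property for the $D$-algebra square, and the six-term exact sequence \eqref{eq-basic-m-v-sequence} is immediate from that property by Definition~\ref{def-m-v-property}. The corollary is thus a purely diagram-chasing consequence of the vanishing hypothesis inserted into the Mayer--Vietoris sequence.
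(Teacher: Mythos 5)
Your proof is correct and is exactly the argument the paper intends (the corollary is stated with a \textup{\qed} because it is an immediate diagram chase): Lemma~\ref{lem-cone-abutting-mv-1} supplies the six-term Mayer--Vietoris sequence for the $\DCone$-algebras, and the vanishing of $K_*(\DCone(X'\cap X''))$ kills the groups immediately preceding $K_*(\DCone(X))$ in that cyclic sequence, forcing injectivity by exactness.
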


 \begin{lemma}
\label{lem-continuity-of-cone-k-theory}
If $\{\, X_r: r=1,2,\dots, \}$ is a decreasing sequence of polynomially convex compact subsets of $\C^n$, then the morphism
\[
\varinjlim K_*(\DCone (X_r)) \longrightarrow K_*(\DCone (\cap_r X_r))
\]
induced from restriction to the intersection is an isomorphism.
\end{lemma}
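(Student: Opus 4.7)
The target limit is $X_\infty := \bigcap_r X_r$, which is itself polynomially convex: any point $w \notin X_\infty$ lies outside some $X_r$, and a polynomial exhibiting polynomial convexity for $X_r$ at $w$ also does so for $X_\infty$. My plan is to reduce the lemma to two applications of the $K$-theory continuity theorem (Theorem~\ref{thm-k-theory-continuity}) combined with the five lemma, applied to the mapping cone six-term sequence \eqref{eq-mapping-cone-six-term-sequence} for the inclusions $B(X_r) \hookrightarrow C(X_r)$.

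The main step is to identify the relevant Banach algebra direct limits. I would show that the canonical morphisms
\[
\varinjlim_r B(X_r) \longrightarrow B(X_\infty) \quad \text{and} \quad \varinjlim_r C(X_r) \longrightarrow C(X_\infty)
\]
are isomorphisms of Banach algebras. Since the restriction maps $B(X_r) \to B(X_{r+1})$ and $C(X_r) \to C(X_{r+1})$ are contractive, the Banach algebra direct limits are defined as in Section~\ref{sec-gelfand-theorem}. The image is dense in both cases: polynomials restricted to $X_\infty$ are dense in $B(X_\infty)$, and by Tietze's extension theorem every $f \in C(X_\infty)$ extends to some $\tilde f \in C(\C^n)$ whose restriction to $X_1$ represents $f$ in the direct limit. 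It suffices to show that for any continuous $g$ defined on $X_1$,
\[
\lim_{r\to\infty}\|g\|_{X_r} = \|g\|_{X_\infty}.
\]
The inequality $\geq$ is immediate from $X_\infty \subseteq X_r$. For $\leq$, pick $z_r \in X_r$ with $|g(z_r)| \geq \|g\|_{X_r} - 1/r$ and extract a convergent subsequence using compactness of $X_1$; since the $X_r$ are closed and nested, the limit point lies in every $X_r$, hence in $X_\infty$.

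With these identifications in hand, Theorem~\ref{thm-k-theory-continuity} gives isomorphisms
\[
\varinjlim_r K_*(B(X_r)) \cong K_*(B(X_\infty))
\quad \text{and} \quad
\varinjlim_r K_*(C(X_r)) \cong K_*(C(X_\infty)).
\]
Using exactness of the direct limit functor on abelian groups, the mapping cone six-term sequences for the inclusions $B(X_r) \hookrightarrow C(X_r)$ combine into a six-term exact sequence featuring $\varinjlim_r K_*(\DCone(X_r))$ in the mapping cone positions. This fits into a commuting ladder with the six-term sequence for $B(X_\infty) \hookrightarrow C(X_\infty)$, and four of the six vertical arrows are isomorphisms by the previous paragraph. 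The five lemma then yields the desired isomorphism $\varinjlim_r K_*(\DCone(X_r)) \cong K_*(\DCone(X_\infty))$.

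The main obstacle is the surjectivity of $\varinjlim_r C(X_r) \to C(X_\infty)$, which requires choosing a continuous extension off $X_\infty$ and thus a non-trivial invocation of Tietze's theorem; the remaining verifications are routine bookkeeping with the six-term sequence and the five lemma.
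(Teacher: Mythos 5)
Your proposal is correct and follows essentially the same route as the paper: continuity of $K$-theory for the $B$- and $C$-algebras, the mapping cone six-term sequence, exactness of direct limits, and the five lemma. The paper leaves the identifications $\varinjlim B(X_r) \cong B(\cap_r X_r)$ and $\varinjlim C(X_r) \cong C(\cap_r X_r)$ implicit; you spell them out (polynomial convexity of the intersection, the seminorm computation via compactness, Tietze extension for surjectivity in the $C$-case), which is a reasonable and correct elaboration of the same argument.
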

 
\begin{proof}
 This follows from the mapping cone six-term exact sequence, the same property for the $B(X)$-algebras  and the $C(X)$-algebras, and the five lemma.
 \end{proof}

\begin{thm}
If  $X$ is any compact and  polynomially convex   subset of $\C^n$, then $K_*(\DCone(X)) {=}  0$.
\end{thm}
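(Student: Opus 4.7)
The plan is to mimic the Jordan-Brouwer bisection argument from the introduction: apply Corollary~\ref{cor-cone-abutting-injective-from-mv} to pass from $X$ to a pair of halves, iterate, and close using Lemma~\ref{lem-continuity-of-cone-k-theory}. I will induct on $d$, the real dimension of the smallest real affine subspace $V \subseteq \C^n \cong \R^{2n}$ containing $X$. The base case $d = 0$ is immediate: $X$ is empty or a single point, and the mapping cone of the inclusion $B(X) \hookrightarrow C(X)$ is either zero or contractible, so its $K$-theory vanishes.

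For the inductive step, assume the theorem for all polynomially convex compacta of strictly smaller real affine dimension, and let $\xi \in K_*(D(X))$. Choose an $\R$-linear functional $\alpha$ on $\C^n$ that is non-constant on $V$. For any $c \in \R$, the pieces $X \cap \{\alpha \le c\}$ and $X \cap \{\alpha \ge c\}$ are again polynomially convex; the only non-trivial verification is separating a point $w \in X$ with $\alpha(w) > c$ from $X \cap \{\alpha \le c\}$ by a polynomial, and this is handled by writing $\alpha = \Re L$ for some $\C$-linear $L$ and approximating $\exp(L(z) - L(w))$ uniformly on $X$ by polynomial partial sums of its Taylor series. The intersection of the two pieces lies in the affine hyperplane $V \cap \{\alpha = c\}$, of affine dimension $d - 1$, so by the inductive hypothesis $K_*(D(X \cap \{\alpha = c\})) = 0$; Corollary~\ref{cor-cone-abutting-injective-from-mv} then tells us that the restriction map $K_*(D(X)) \to K_*(D(X \cap \{\alpha \le c\})) \oplus K_*(D(X \cap \{\alpha \ge c\}))$ is injective.

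To finish, I build a decreasing sequence $X = X_0 \supseteq X_1 \supseteq \cdots$ of polynomially convex compacta by bisection: at stage $r$, let $c_r$ be the midpoint of the compact interval $\alpha(X_r)$ and let $X_{r+1}$ be whichever of the two halves $X_r \cap \{\alpha \le c_r\}$, $X_r \cap \{\alpha \ge c_r\}$ still satisfies $\xi|_{X_{r+1}} \ne 0$. The injectivity just established (applied at each stage, legitimately because the intersection of the two halves has affine dimension at most $d - 1$) guarantees that such a choice exists so long as $\xi|_{X_r} \ne 0$. Since the $\alpha$-width of $X_r$ halves at every step, $\alpha$ is constant on the nonempty compact set $\bigcap_r X_r$, which therefore lies in an affine hyperplane of $V$ and has affine dimension at most $d - 1$. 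By the inductive hypothesis $K_*(D(\bigcap_r X_r)) = 0$, and Lemma~\ref{lem-continuity-of-cone-k-theory} then gives $\varinjlim K_*(D(X_r)) = 0$. Consequently $\xi|_{X_s}$ must vanish for some finite $s$, contradicting the construction unless $\xi$ was zero to begin with.

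The step I expect to be the main obstacle is verifying that the half-space pieces $X \cap \{\alpha \le c\}$ are polynomially convex, so that the Mayer-Vietoris machinery from Section~5 and Corollary~\ref{cor-cone-abutting-injective-from-mv} can be invoked uniformly along the bisected sequence; once this (together with the trivial remark that $\bigcap_r X_r$ inherits polynomial convexity from its terms) is confirmed, everything else is a clean assembly of the previously established tools and the induction on real affine dimension.
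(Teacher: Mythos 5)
Your proposal is correct and follows essentially the same strategy as the paper: induct on the real affine dimension of $X$, bisect via half-spaces $\{\alpha \le c\}$ and $\{\alpha \ge c\}$, use Corollary~\ref{cor-cone-abutting-injective-from-mv} to pick a half on which a hypothetical nonzero class survives, and pass to the nested intersection using Lemma~\ref{lem-continuity-of-cone-k-theory} to reach a contradiction. The one point you verify that the paper leaves implicit is that the half-space slices of a polynomially convex compact set remain polynomially convex (a short observation, since $\widehat{X\cap K}\subseteq \widehat{X}\cap\widehat{K}=X\cap K$ for compact convex $K$), which is a worthwhile check since all of Section~5 presupposes it.
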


\begin{proof}
First, the case when $X$ is a  point is trivial.  Starting from this, we prove the theorem by induction, as follows. Suppose that $K_*(\DCone(X))=0$  for all compact polynomially convex sets   $X\subseteq \C^n$ that may be included within an $\R$-affine subspace of $\C^n$  of   dimension at most $k{-}1$, where $k {>} 0$. Now suppose that $X$ may be included in an $\R$-affine subspace $S\subseteq \C^n$ of dimension $k$, and suppose for the sake of a contradiction   that $K_*(\DCone(X)) \neq 0$. 

Choose an $\R$-linear functional $\alpha$ on $\C^n$ that is non-constant on $S$, let $a$ and $b$ be the minimal and maximal values of $\alpha$ on $X$, and let $c=(a{+}b)/2$.  Form the corresponding decomposition $X = X'\cup X''$ as in the statement of Lemma~\ref{lem-cone-abutting-mv-1} above. 

Let $x \in K_*(\DCone(X))$ be a nonzero element. By Corollary \ref{cor-cone-abutting-injective-from-mv}, we may choose one of $X'$  or $X''$, call it $X_1$,  so that the image of $x$  in  $K_*(\DCone(X_1))$  is nonzero.

Note that the difference between the maximum and minimum values of $\alpha$ on $X_1$ is $(b{-}a)/2$
By repeating the  argument starting from $X_1$,  we obtain $X_2\subseteq X_1$ such that the image of $x$ in $K_*(\DCone(X_2))$ is nonzero,  and such that the   difference between the maximum and minimum values of $\alpha$ on $X_2$ is $(b{-}a)/4$.

Continuing, there is a decreasing sequence $\{X_r\}$ of compact polynomially convex subsets of $X$  such that the image of $x$ in $K_*(\DCone(X_r))$ is nonzero and such that the   difference between the maximum and minimum values of $\alpha$ on $X_r$ is $(b{-}a)/2^r$.

But now let $Y$ be the intersection of the $X_r$.  Then $\alpha$ is constant on $Y$, so that $Y$ may be included in an $\R$-affine subspace of dimension $k{-}1$, while by Lemma~\ref{lem-continuity-of-cone-k-theory} the image of $x$ in $K_*(D(Y))$ is nonzero.  Contradiction.   
\end{proof}

\section{Appendix: A Proof of Lemma~\ref{lem-due-to-allan}}

We start from the following  result of Oka. The proof  may be found in many texts; see  for instance \cite[\S2.7]{Hormander90}.

\begin{theorem} 
Let $p$ be  a  polynomial function  on $\C^n$, and define  
\[
\mu\colon \C^n\longrightarrow  \C^{n+1} 
\]
 by $ \mu(z) = (z,p(z))$.
 Let $Y$ be a compact polynomial polyhedron  in $\C^n$, let  
 \[
 V = \{ (z,w) \in Y \times \C : |w_1|,\dots, |w_k| \le 1 \, \} ,
 \]
and let 
\[
X =  \{\,z \in Y  : |p(z)|  \leq 1\, \}  .
\]
Each  function that is holomorphic in a neighborhood of $X$ may be written in the   form 
\[
f(z) = h(\mu(z))
\]
near $X$, where $h$ is a function that is holomorphic in a neighborhood of $V$.
\end{theorem}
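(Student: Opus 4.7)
This is Oka's classical extension theorem for polynomial polyhedra; a detailed proof appears in Hörmander \cite[\S2.7]{Hormander90}. The plan is to construct $h$ explicitly by a one-variable Cauchy integral argument, exploiting the polynomial (hence entire) nature of $p$ in an essential way. First, by compactness of $X$ and openness of the domain of $f$, I can choose $\epsilon > 0$ and an open neighborhood $Y'$ of $Y$ in $\C^n$ such that $f$ is holomorphic on $U = \{z \in Y' : |p(z)| < 1 + 3\epsilon\}$. The target is to produce $h$ holomorphic on a neighborhood of $V$ inside $Y' \times \{|w| < 1 + 2\epsilon\}$, satisfying $h(z, p(z)) = f(z)$ on a neighborhood of $X$.

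The strategy is to introduce an auxiliary complex variable $\tau$ and exploit the Cauchy integral identity
\[
f(z) = \frac{1}{2\pi i} \oint_{|\tau| = 1 + 3\epsilon/2} \frac{f(z)}{\tau - p(z)} \, d\tau,
\]
valid for $z \in U$ by Cauchy's formula applied to the constant function $f(z)$ in the $\tau$-variable. The idea is to reshape this identity so the integrand is a function of $(z, \tau)$ that extends holomorphically to all $z$ in a neighborhood of $Y$, and then to replace $p(z)$ by an independent variable $w$. The output is a formula
\[
h(z, w) = \frac{1}{2\pi i} \oint_{|\tau| = 1 + 3\epsilon/2} \frac{G(z, \tau)}{\tau - w} \, d\tau,
\]
for a suitable $G(z, \tau)$ holomorphic in $(z, \tau)$ on a neighborhood of $Y' \times \{|\tau| = 1 + 3\epsilon/2\}$ satisfying $G(z, p(z)) = f(z)$ for $z$ near $X$. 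Granted such $G$, holomorphicity of $h$ on a neighborhood of $V$ is immediate by differentiation under the integral, and the identity $h(z, p(z)) = f(z)$ for $z$ near $X$ follows from Cauchy's integral theorem applied to $G$, since then $|p(z)| \le 1 < 1 + 3\epsilon/2$ places $p(z)$ inside the contour.

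The main obstacle is the construction of $G$: this function must be holomorphic in $z$ throughout $Y'$, but it is prescribed only near the graph of $p$, where $f$ actually lives. The standard device is to introduce a smooth cutoff $\chi(\tau)$ equal to $1$ on $|\tau| \le 1 + \epsilon$ and vanishing off $|\tau| \le 1 + 2\epsilon$, and then to convert the smooth function $\chi(\tau) f(z)$ (defined on $U$) into a holomorphic function in $\tau$ via a one-variable Cauchy--Pompeiu integral in $\tau$; crucially, because $p$ is entire, the $z$-holomorphicity of the construction is preserved even on the complement $Y' \setminus U$, where $f$ itself is undefined. Working out the details rigorously is the content of Hörmander's Theorem~2.7.6, and the argument in \cite{Allan69} provides an alternative route.
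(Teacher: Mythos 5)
The paper offers no proof of this theorem; it is stated with a citation to H\"ormander~\cite[\S 2.7]{Hormander90}, so there is no argument in the paper to compare against. What you have written is a sketch of the proof found in that reference. You have identified the right reference and the right tools (a cutoff $\chi$, a Cauchy--Pompeiu integral, the entirety of $p$), and you have correctly named the obstacle: $f$ lives only near $X$, but $G$ (or $h$) must be holomorphic in $z$ on a full neighborhood of $Y$.

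However, the specific construction you describe does not resolve that obstacle, and a step near the end is also incorrect. Applying the one-variable Cauchy--Pompeiu formula in $\tau$ to $\chi(\tau)f(z)$ produces a function that is holomorphic in $\tau$, but it still carries the undefined factor $f(z)$; nothing in the $\tau$-integral extends the $z$-domain past $U$, so you have not escaped the problem. The device that actually works in H\"ormander's argument is the \emph{composition} $\chi\circ p$: one sets $F(z,w) = \chi(p(z))\,f(z)$, which extends by zero to a smooth function on a neighborhood of $Y\times\overline{D}$, precisely because the cutoff vanishes exactly where $f$ would be undefined. One then writes $\bar\partial F = (p(z)-w)\,\omega(z,w)$ with $\omega$ a smooth $\bar\partial$-closed $(0,1)$-form (division by $p(z)-w$ is permissible since $\bar\partial F$ is supported where $|p(z)|$ is close to $1+2\epsilon$ while $|w|\le 1+\epsilon/2$), solves $\bar\partial v = \omega$ on a neighborhood of $Y\times\overline{D}$, and takes $h = F - (p-w)v$. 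The equation $\bar\partial v = \omega$ is a $\bar\partial$-equation in the $z$ and $w$ variables, not a one-variable equation in an auxiliary $\tau$; for a general polynomial polyhedron $Y$ its solvability is itself a nontrivial input (obtained by iterating the polydisk case or by $L^2$-methods), which your sketch elides. Separately, your claimed final step --- that $h(z,p(z)) = f(z)$ because Cauchy's theorem applies with $p(z)$ inside the contour --- would require $G$ to be holomorphic in $\tau$ throughout the disk $|\tau|<1+3\epsilon/2$, not merely near the circle; but a $G$ that is holomorphic in $(z,\tau)$ on all of $Y'\times\{|\tau|<1+3\epsilon/2\}$ with $G(z,p(z))=f(z)$ is already the sought extension, so the integral formula for $h$ would be redundant, and if $G$ is holomorphic only near the circle the Cauchy integral of $G$ does not reproduce $G(z,p(z))$.
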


\begin{proof}[Proof of Lemma~\ref{lem-due-to-allan}]
Suppose  first  that $k {=} 1$.  The   function $q{=}q_1$ has nowhere vanishing gradient. So in a neighborhood of any point it may be chosen as the first coordinate  in a local coordinate system.  In that neighborhood there certainly exists a holomorphic solution to the equation $f= qh$. These local solutions patch together to define a solution throughout a neighborhood of $V$.

Now suppose that the $k{-}1$ case of the lemma has been proved (for all  $Y$).  Let $f$ be a holomorphic function near  $V$ that vanishes on the common zero set of $q_1,\dots, q_k$, as in the statement of the lemma.
Define 
\[
V' = \{\,  (z,w)\in Y' \times \C^{k-1} :    |w_1|,\dots, |w_{k-1}| \le  1 \, \bigl \} ,
\]
where 
\[
Y' = \{ \, z\in Y :  |p_k(z)| \le 1 \,\} ,
\]
and define   $f'$ in a neighborhood of  $V'\subseteq \C^{n+k-1}$ by
\[
f'(z, w_1,\dots, w_{k-1}) = f (z, w_1,\dots, w_{k-1},p_k(z)) .
\]
It vanishes on the common zero set of the polynomials 
\[
q'_j(z,w_1,\dots ,w_{k-1}) = p_j(z) - w_j\qquad ( j = 1,\dots , k{-}1) .
\]
So by the $k{-}1$ case of the lemma it may be written as  
\[
f' = q'_1h'_1 + \cdots  + q'_{k-1} h'_{k-1} ,
\]
for some    $h_1', \dots h_{k-1}'$ that are holomorphic near  $V'$.  By Oka's theorem there are    $h_1,\dots , h_{k-1}$   holomorphic near  $V$ such that 
\[
h_j (z, w_1,\dots , w_{k-1}, p_k(z)) = h'_j (z, w_1,\dots, w_{k-1})
\]
for $j=1,\dots, k{-}1$ and for $(z,w_1,\dots ,w_{k-1}) $ near $V'$. 
But then the function
\[
f -    (q_1 h_1 +  \cdots +  q_{k-1}h_{k-1} ) 
\]
is holomorphic in a neighborhood of $V$ and vanishes on the zero set of $q_k$. So by the $k {=} 1$ case of the lemma, there is a  function $h_k$  that is holomorphic in a neighborhood of  $V$ such that
\[
f -    (q_1 h_1 +  \cdots +  q_{k-1}h_{k-1} ) = q_k h_k,
\]
near $V$, as required.
\end{proof}

\bibliography{References}

\begin{thebibliography}{10}

\bibitem{Allan69}
Graham~R. Allan.
\newblock A note on the holomorphic functional calculus in a {B}anach algebra.
\newblock {\em Proc. Amer. Math. Soc.}, 22:77--81, 1969.

\bibitem{AparicioEtAl19}
Maria Paula~Gomez Aparicio, Pierre Julg, and Alain Valette.
\newblock {\em The Baum--Connes conjecture: an extended survey}, pages
  127--244.
\newblock Springer International Publishing, 2019.

\bibitem{Arens66}
Richard Arens.
\newblock To what extent does the space of maximal ideals determine the
  algebras?
\newblock In {\em Function {A}lgebras ({P}roc. {I}nternat. {S}ympos. on
  {F}unction {A}lgebras, {T}ulane {U}niv., 1965)}, pages 164--168.
  Scott-Foresman, Chicago, Ill., 1966.

\bibitem{BartleGraves52}
Robert~G. Bartle and Lawrence~M. Graves.
\newblock Mappings between function spaces.
\newblock {\em Trans. Amer. Math. Soc.}, 72:400--413, 1952.

\bibitem{BCH93}
Paul Baum, Alain Connes, and Nigel Higson.
\newblock Classifying space for proper actions and {$K$}-theory of group
  {$C^\ast$}-algebras.
\newblock In {\em {$C^\ast$}-algebras: 1943--1993 ({S}an {A}ntonio, {TX},
  1993)}, volume 167 of {\em Contemp. Math.}, pages 240--291. Amer. Math. Soc.,
  Providence, RI, 1994.

\bibitem{Bost90}
Jean-Benoit Bost.
\newblock Principe d'{O}ka, {$K$}-th\'{e}orie et syst\`emes dynamiques non
  commutatifs.
\newblock {\em Invent. Math.}, 101(2):261--333, 1990.

\bibitem{GelfandRaikovShilov64}
Israel Gelfand, Dmitrii Raikov, and Georgiy Shilov.
\newblock {\em Commutative normed rings}.
\newblock Translated from the Russian, with a supplementary chapter. Chelsea
  Publishing Co., New York, 1964.

\bibitem{Graves50}
Lawrence~M. Graves.
\newblock Some mapping theorems.
\newblock {\em Duke Math. J.}, 17:111--114, 1950.

\bibitem{Hormander90}
Lars Hormander.
\newblock {\em An introduction to complex analysis in several variables},
  volume~7 of {\em North-Holland Mathematical Library}.
\newblock North-Holland Publishing Co., Amsterdam, third edition, 1990.

\bibitem{Karoubi78}
Max Karoubi.
\newblock {\em {$K$}-theory}.
\newblock Springer-Verlag, Berlin-New York, 1978.
\newblock An introduction, Grundlehren der Mathematischen Wissenschaften, Band
  226.

\bibitem{Larusson03}
Finnur Larusson.
\newblock Excision for simplicial sheaves on the {S}tein site and {G}romov's
  {O}ka principle.
\newblock {\em Internat. J. Math.}, 14(2):191--209, 2003.

\bibitem{Milnor71}
John Milnor.
\newblock {\em Introduction to algebraic {$K$}-theory}.
\newblock Princeton University Press, Princeton, N.J.; University of Tokyo
  Press, Tokyo, 1971.
\newblock Annals of Mathematics Studies, No. 72.

\bibitem{Novodvorski67}
Mark~E. Novodvorski\u{\i}.
\newblock Certain homotopic invariants of the space of maximal ideals.
\newblock {\em Mat. Zametki}, 1:487--494, 1967.

\bibitem{Taylor75NATO}
Joseph~L. Taylor.
\newblock Banach algebras and topology.
\newblock In {\em Algebras in analysis ({P}roc. {I}nstructional {C}onf. and
  {NATO} {A}dvanced {S}tudy {I}nst., {B}irmingham, 1973)}, pages 118--186,
  1975.

\bibitem{Taylor75ICM}
Joseph~L. Taylor.
\newblock Homotopy invariants for {B}anach algebras.
\newblock In {\em Proceedings of the {I}nternational {C}ongress of
  {M}athematicians ({V}ancouver, {B}. {C}., 1974), {V}ol. 2}, pages 115--119,
  1975.

\bibitem{Taylor76}
Joseph~L. Taylor.
\newblock Topological invariants of the maximal ideal space of a {B}anach
  algebra.
\newblock {\em Advances in Math.}, 19(2):149--206, 1976.

\bibitem{Whitehead78}
George~W. Whitehead.
\newblock {\em Elements of homotopy theory}, volume~61 of {\em Graduate Texts
  in Mathematics}.
\newblock Springer-Verlag, New York-Berlin, 1978.

\end{thebibliography}
\bibliographystyle{plain}

\end{document}